\title{Examples of Com-PreLie Hopf algebras}
\date{}
\author{Lo\"\i c Foissy\\ \\
{\small \it Fédération de Recherche Mathématique du Nord Pas de Calais FR 2956}\\
{\small \it Laboratoire de Mathématiques Pures et Appliquées Joseph Liouville}\\
{\small \it Université du Littoral Côte dOpale-Centre Universitaire de la Mi-Voix}\\ 
{\small \it 50, rue Ferdinand Buisson, CS 80699,  62228 Calais Cedex, France}\\ \\
{\small \it email: foissy@lmpa.univ-littoral.fr}}
\newtheorem{defi}{\indent Definition}
\newtheorem{lemma}[defi]{\indent Lemma}
\newtheorem{cor}[defi]{\indent Corollary}
\newtheorem{theo}[defi]{\indent Theorem}
\newtheorem{prop}[defi]{\indent Proposition}
\newenvironment{proof}{{\bf Proof.}}{\hfill $\Box$}
\def\shuff#1#2{\mathbin{
      \hbox{\vbox{\hbox{\vrule \hskip#2 \vrule height#1 width 0pt}\hrule}\vbox{\hbox{\vrule \hskip#2 \vrule height#1 width 0pt\vrule }\hrule}}}}
\def\shuffl{{\mathchoice{\shuff{7pt}{3.5pt}}{\shuff{6pt}{3pt}}{\shuff{4pt}{2pt}}{\shuff{3pt}{1.5pt}}}}
\def\shuffle{\, \shuffl \,}
\newcommand{\K}{\mathbb{K}}
\newcommand{\tdelta}{\tilde{\Delta}}
\newcommand{\g}{\mathfrak{g}}
\newcommand{\N}{\mathbb{N}}
\newcommand{\bfDelta}{\mathbf{\Delta}}
\newcommand{\bftDelta}{\mathbf{\tdelta}}
\begin{document}

\maketitle

ABSTRACT. We gives examples of Com-PreLie bialgebras, that is to say bialgebras with a preLie product satisfying certain compatibilities.
Three families are defined on shuffle algebras: one associated to linear endomorphisms, one associated to linear form, one associated to preLie algebras.
We also give all graded preLie product on $\K[X]$, making this bialgebra a Com-PreLie bialgebra, and classify all connected cocommutative
Com-PreLie bialgebras.\\

KEYWORDS. Com-PreLie bialgebras; PreLie algebras; connected cocommutative bialgebras.\\

AMS CLASSIFICATION. 17D25\\

\tableofcontents

\section*{Introduction}

The composition of Fliess operators \cite{Gray} gives a group structure on set of noncommutative formal series $\K\langle\langle x_0,x_1\rangle\rangle$
in two variables $x_0$ and $x_1$. For example, let us consider the following formal series:
\begin{align*}
A&=a_\emptyset+a_0 x_0+a_1x_1+a_{00}x_0^2+a_{01}x_0x_1+a_{10}x_1x_0+a_{11}x_1^2+\ldots,\\
B&=b_\emptyset+b_0 x_0+b_1x_1+b_{00}x_0^2+b_{01}x_0x_1+b_{10}x_1x_0+b_{11}x_1^2+\ldots,\\
B&=c_\emptyset+c_0 x_0+c_1x_1+c_{00}x_0^2+c_{01}x_0x_1+c_{10}x_1x_0+c_{11}x_1^2+\ldots;
\end{align*}
if $C=A.B$, then:
\begin{align*}
c_\emptyset&=a_\emptyset+b_\emptyset,\\
c_0&=a_0+b_0+a_1b_\emptyset,\\
c_{00}&=a_{00}+b_{00}+a_{01}b_\emptyset+a_{10}b_\emptyset+a_{11}b_\emptyset^2+a_1b_0,\\
c_{01}&=a_{01}+b_{01}+a_{11}b_\emptyset+a_1b_1,\\
c_{10}&=a_{10}+b_{10}+a_{11}b_\emptyset,\\
c_{11}&=a_{11}+b_{11}.
\end{align*}
This quite complicated structure can be more easily described with the help of the Hopf algebra of coordinates of this group;
this leads to a Lie algebra structure on the algebra $\K\langle x_0,x_1\rangle$ of noncommutative polynomials in two variables,
which is in a certain sense the infinitesimal structure associated to the group of Fliess operators.
As explained in \cite{Foissyprelie}, this Lie bracket comes from a nonassociative, preLie product $\bullet$. For example:
\begin{align*}
x_0x_0\bullet x_0&=0,&x_0x_0\bullet x_1&=0,\\
x_0x_1\bullet x_0&=x_0x_0x_0,&x_0x_1\bullet x_1&=x_0x_0x_1,\\
x_1x_0\bullet x_0&=2x_0x_0x_0,&x_1x_0\bullet x_1&=x_0x_0x_1+x_0x_1x_0,\\
x_1x_1\bullet x_0&=x_1x_0x_0+x_0x_1x_0+x_0x_0x_1,&x_1x_1\bullet x_1&=x_1x_0x_1+2x_0x_1x_1.
\end{align*}
Moreover, $\K\langle x_0,x_1\rangle$ is naturally a Hopf algebra with the shuffle product $\shuffle$ and the deconcatenation coproduct $\Delta$,
and it turns out that there exists compatibilities between this Hopf-algebraic structure and the preLie product $\bullet$:
\begin{itemize}
\item For all $a,b,c \in A$, $(a\shuffle b)\bullet c=(a\bullet c)\shuffle b+a\shuffle (b\bullet c)$.
\item For all $a,b\in A$, $\Delta(a\bullet b)=a^{(1)}\otimes a^{(2)}\bullet b+a^{(1)}\bullet b^{(1)}\otimes a^{(2)}\shuffle b^{(2)}$,
with Sweedler's notation.
\end{itemize}
this is a Com-PreLie bialgebra (definition \ref{1}). Moreover, the shuffle bracket can be induced by the half-shuffle product $\prec$,
and there is also a compatibility between $\prec$ and $\bullet$: 
\begin{itemize}
\item For all $a,b,c \in A$, $(a\prec b)\bullet c=(a\bullet c)\prec b+a\prec (b\bullet c)$.
\end{itemize}
we obtain a Zinbiel-PreLie bialgebra.\\

Our aim in the present text is to give examples of other Com-PreLie algebras or bialgebras.
We first introduce three families, all based on the shuffle Hopf algebra $T(V)$ associated to a vector space $V$.
\begin{enumerate}
\item The first family $T(V,f)$, introduced in \cite{Foissyprelie2}, is parametrized by linear endomorphism of $V$. 
For example, if $x_1,x_2,x_3\in V$, $w\in T(V)$:
\begin{align*}
x_1 \bullet w&=f(x_1) w,\\
x_1x_2\bullet w&=x_1 f(x_2)w+f(x_1)(x_2 \shuffle w),\\
x_1x_2x_3 \bullet w&=x_1x_2f(x_3)w+x_1f(x_2)(x_3 \shuffle w)+f(x_1)(x_2x_3\shuffle w).
\end{align*}
In particular, if $V=Vect(x_0,x_1)$, $f(x_0)=0$ and $f(x_1)=x_0$, we recover in this way the Com-PreLie bialgebra of Fliess operators.
\item The second family $T(V,f,\lambda)$ is indexed by pairs $(f,\lambda)$, where $f$ is a linear form on $V$ and $\lambda$ is a scalar.
For example, if $x,y_1,y_2,y_3 \in V$ and $w\in T(V)$:
\begin{align*}
xw\bullet y_1&=f(x) w\shuffle y_1,\\
xw\bullet y_1y_2&=f(x)(w\shuffle y_1y_2+\lambda f(y_1) w\shuffle y_2),\\
xw\bullet y_1y_2y_3&=f(x)(w\shuffle y_1y_2y_3+\lambda f(y_1)w\shuffle y_2y_3+\lambda^2f(y_1)f(y_2)w\shuffle y_3).
\end{align*}
We obtain a Com-PreLie algebra, but generally not a Com-PreLie bialgebra. Nevertheless, the subalgebra $coS(V)$ generated by $V$
is a Com-PreLie bialgebra. Up to an isomorphism, the symmetric algebra becomes a Com-PreLie bialgebra, denoted by $S(V,f,\lambda)$.
\item If $\star$ is a preLie product on $V$, then it can be extended in a product on $T(V)$, making it a Com-PreLie bialgebra denoted by $T(V,\star)$.
For example, if $x_1,x_2,x_3,y\in V$, $w\in T(V)$.
\begin{align*}
x_1\bullet yw&=(x_1\star y)w,\\
x_1x_2\bullet yw&=(x_1\star y)(x_2\shuffle w)+x_1(x_2\star y) w,\\
x_1x_2x_3\bullet yw&=(x_1\star y)(x_2x_3\shuffle w)+x_1(x_2\star y)(x_3\shuffle w)+x_1x_2(x_3\star y)w.
\end{align*} \end{enumerate}
These examples answer some questions on Com-PreLie bialgebras. According to proposition \ref{4}, if $A$ is a Com-PreLie bialgebra,
the map $f_A$ defined by $f_A(x)=x\bullet 1_A$ is an endomorphism of $Prim(A)$; if $f_A=0$, then $Prim(A)$ is a PreLie subalgebra
of $A$. Then:
\begin{itemize}
\item If $A=T(V,f)$, then $f_A=f$, which proves that any linear endomorphim can be obtained in this way.
\item If $A=T(V,\star)$, then $f_A=0$ and the preLie product on $Prim(A)$ is $\star$, which proves that any preLie product can be obtained in this way.
\end{itemize}

The next section is devoted to the algebra $\K[X]$. We first classify preLie products making it a graded Com-PreLie algebra:
this gives four families of Com-PreLie algebras described in theorem \ref{18}, including certain cases of $T(V,f)$. Only a few of them are
compatible with the coproduct of $\K[X]$ (proposition \ref{23}). The last paragraph gives a classification of all connected, cocommutative
Com-PreLie bialgebras (theorem \ref{24}): up to an isomorphism these are the $S(V,f,\lambda)$ and examples on $\K[X]$. \\

{\bf Aknowledgment.} The research leading these results was partially supported by the French National Research Agency under the reference
ANR-12-BS01-0017.\\

{\bf Notations.} \begin{enumerate}
\item $\K$ is a commutative field of characteristic zero. All the objects (vector spaces, algebras, coalgebras, preLie algebras$\ldots$)
in this text will be taken over $\K$.
\item Let $A$ be a bialgebra. 
\begin{enumerate}
\item We shall use Swwedler's notation $\Delta(a)=a^{(1)}\otimes a^{(2)}$ for all $a\in A$.
\item We denote by $A_+$ the augmentation ideal of $A$, and by $\tdelta$ the coassociative coproduct defined by:
$$\tdelta:\left\{\begin{array}{rcl}
A_+&\longrightarrow& A_+\otimes A_+\\
a&\longrightarrow& \Delta(a)-a\otimes 1_A-1_A\otimes a.
\end{array}\right.$$
We shall use Sweedler's notation $\tdelta(a)=a'\otimes a''$ for all $a\in A_+$.
\end{enumerate} \end{enumerate}

 \section{Com-PreLie and Zinbiel-PreLie algebras}

\subsection{Definitions}

\begin{defi}\label{1} \begin{enumerate}
\item A \emph{Com-PreLie algebra} \cite{Mansuy}  is a family $A=(A,\shuffle,\bullet)$, where $A$ is a vector space and 
$\shuffle$ and $\bullet$ are bilinear products on $A$, such that:
\begin{enumerate}
\item $(A,\shuffle)$ is an associative, commutative algebra.
\item $(A,\bullet)$ is a (right) preLie algebra, that is to say, for all $a,b,c\in A$:
$$(a\bullet b)\bullet c-a\bullet(b\bullet c)=(a\bullet c)\bullet b-a\bullet(c\bullet b).$$
\item For all $a,b,c \in A$, $(a\shuffle b)\bullet c=(a\bullet c)\shuffle b+a\shuffle (b\bullet c)$.
\end{enumerate}
\item A \emph{Com-PreLie bialgebra} is a family $(A,\shuffle,\bullet,\Delta)$, such that:
\begin{enumerate}
\item $(A,\shuffle,\bullet)$ is a unitary Com-PreLie algebra.
\item $(A,\shuffle,\Delta)$ is a bialgebra.
\item For all $a,b\in A$, $\Delta(a\bullet b)=a^{(1)}\otimes a^{(2)}\bullet b+a^{(1)}\bullet b^{(1)}\otimes a^{(2)}\shuffle b^{(2)}$.
\end{enumerate}
We shall say that $A$ is \emph{unitary} if the associative algebra $(A,\shuffle)$ has a unit.
\item A \emph{Zinbiel-PreLie algebra}  is a family $A=(A,\prec,\bullet)$, where $A$ is a vector space and 
$\prec$ and $\bullet$ are bilinear products on $A$, such that:
\begin{enumerate}
\item $(A,\prec)$ is a Zinbiel algebra (or shuffle algebra, \cite{Schutz,Loday2,FoissyPatras}) that is to say, for all $a,b,c \in A$:
$$(a\prec b) \prec c=a\prec (b\prec c+c\prec b).$$
\item $(A,\bullet)$ is a preLie algebra.
\item For all $a,b,c \in A$, $(a\prec b)\bullet c=(a\bullet c)\prec b+a\prec (b\bullet c)$.
\end{enumerate}
\item A \emph{Zinbiel-PreLie bialgebra} is a family $(A,\shuffle,\prec,\bullet,\Delta)$ such that:
\begin{enumerate}
\item $(A,\shuffle,\bullet,\Delta)$ is a Com-PreLie bialgebra. 
\item $(A_+,\prec,\bullet)$ is a Zinbiel-PreLie algebra, and for all $x,y \in A_+$, $x\prec y+y\prec x=x\shuffle y$.
\item For all $a,b\in A_+$:
$$\tdelta(a\prec b)=a'\prec b'\otimes a''\shuffle b''+a'\prec b\otimes a''+a'\otimes a''\shuffle b+a\prec b'\otimes b''+a\otimes b.$$
\end{enumerate}\end{enumerate}\end{defi}

{\bf Remarks.} \begin{enumerate}
\item If $(A,\shuffle,\bullet,\Delta)$ is a Com-PreLie bialgebra, then for any $\lambda \in \K$, $(A,\shuffle,\lambda \bullet,\Delta)$ also is. 
\item If $A$ is a Zinbiel-preLie algebra, then the product $\shuffle$ defined by $a\shuffle b=a\prec b+b\prec a$ is associative and commutative,
and $(A,\shuffle,\bullet)$ is a Com-PreLie algebra. Moreover, if $A$ is a Zinbiel-PreLie bialgebra, it is also a Com-PreLie bialgebra.
\item If $A$ is a Zinbiel-PreLie bialgebra, the product $\shuffle$ is entirely determined by $\prec$:
we can omit $\shuffle$ in the description of a Zinbiel-PreLie bialgebra.
\item If $A$ is a Zinbiel-PreLie bialgebra, we extend $\prec$ by $a\prec 1_A=a$ and $1_A\prec a=0$ for all $a\in A_+$. 
Note that $1_A\prec 1_A$ is not defined. 
\item If $A$ is a Com-Prelie bialgebra, if $a,b\in A_+$:
\begin{align*}
\tdelta(a\bullet 1_A)&=a'\otimes a''\bullet 1_A+a'\bullet 1_A\otimes a'',\\
\tdelta(a\bullet b)&=a'\otimes a''\bullet b+a\bullet 1_A \otimes b+a\bullet b'\otimes b''\\
&+a'\bullet 1_A\otimes a''\shuffle b+a'\bullet b\otimes a''+a'\bullet b'\otimes a''\shuffle b'',
\end{align*}
as we shall prove later (lemma \ref{3}) that $1_A\bullet c=0$ for all $c\in A$.
\end{enumerate}

Associative algebras are preLie. However, Com-PreLie algebras are rarely associative:

\begin{prop}
Let $A=(A,\shuffle,\bullet)$ be a Com-PreLie algebra, such that for all $x\in A,$ $x\shuffle x=0$ if, and only if, $x=0$. 
If $\bullet$ is associative, then it is zero.
\end{prop}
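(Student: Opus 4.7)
The plan is to combine the Com-PreLie compatibility with the associativity hypothesis on $\bullet$ in order to force a square $(x\bullet y)\shuffle(x\bullet y)$ to vanish; the hypothesis on $\shuffle$ will then finish the job.

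First I would apply the Leibniz-type rule with $a=b=x$, $c=y$:
$$(x\shuffle x)\bullet y=(x\bullet y)\shuffle x+x\shuffle(x\bullet y)=2\,x\shuffle(x\bullet y).$$
This identifies the effect of $\bullet y$ on the square $x\shuffle x$.

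Next I would compute $((x\shuffle x)\bullet y)\bullet z$ in two ways. On the one hand, associativity of $\bullet$ yields $((x\shuffle x)\bullet y)\bullet z=(x\shuffle x)\bullet(y\bullet z)$, to which I reapply the first step (with $y\bullet z$ in place of $y$), and then rewrite $x\bullet(y\bullet z)=(x\bullet y)\bullet z$ by associativity again, obtaining $2\,x\shuffle((x\bullet y)\bullet z)$. On the other hand, starting from the identity of the first step, I would use the compatibility a second time with $a=x$, $b=x\bullet y$, $c=z$ to get
$$\bigl((x\shuffle x)\bullet y\bigr)\bullet z=2(x\bullet z)\shuffle(x\bullet y)+2\,x\shuffle\bigl((x\bullet y)\bullet z\bigr).$$
Comparing the two expressions cancels the common term $2\,x\shuffle((x\bullet y)\bullet z)$, leaving $(x\bullet z)\shuffle(x\bullet y)=0$ for all $x,y,z\in A$.

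Specializing $z=y$ gives $(x\bullet y)\shuffle(x\bullet y)=0$, whence $x\bullet y=0$ by the hypothesis on $\shuffle$, for all $x,y\in A$. The only genuine obstacle is guessing which double application to perform, but once one tries to re-bracket $((x\shuffle x)\bullet y)\bullet z$ the calculation is forced; no cleverness beyond the two compatibilities and associativity is needed.
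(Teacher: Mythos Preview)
Your argument is correct and follows essentially the same path as the paper: expand $((x\shuffle x)\bullet y)\bullet y$ using the Com-PreLie compatibility twice, compare with what associativity gives, and deduce $(x\bullet y)\shuffle(x\bullet y)=0$. Your only cosmetic difference is carrying an auxiliary variable $z$ and specializing $z=y$ at the end, whereas the paper works with $y$ throughout; the computations are otherwise identical.
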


\begin{proof} Let $x,y\in A$. 
\begin{align*}
((x\shuffle x)\bullet y) \bullet y&=2((x\bullet y)\shuffle x)\bullet y\\
&=2((x\bullet y)\bullet y)\shuffle x+2(x\bullet y)\shuffle (x\bullet y)\\
&=2(x\bullet (y \bullet y))\shuffle x+2(x\bullet y)\shuffle (x\bullet y)\\
&=(x\shuffle x)\bullet (y \bullet y)+2(x\bullet y)\shuffle (x\bullet y).
\end{align*}
Hence, $(x\bullet y)\shuffle (x\bullet y)=0$. As $A$ is a domain, $x\bullet y=0$. \end{proof}\\

Hence, in our examples below, which are integral domains (shuffle algebras or symmetric algebras),
the preLie product is associative if, and only if, it is zero. Here is another example, where $\bullet$ is associative.
We take $A=Vect(1,x)$, with the products defined by:
\begin{align*}
&\begin{array}{c|c|c}
\shuffle&1&x\\
\hline 1&1&x\\
\hline x&x&0
\end{array}&&\begin{array}{c|c|c}
\bullet &1&x\\
\hline 1&0&0\\
\hline x&0&x
\end{array}\end{align*}
If the characteristic of the base field $\K$ is $2$, this is a Com-PreLie bialgebra, with the coproduct defined by $\Delta(x)=x\otimes 1+1\otimes x$.

\subsection{Linear endomorphism on primitive elements}

\begin{lemma}\label{3} \begin{enumerate}
\item Let $A$ be a Com-PreLie algebra. For all $a \in A$, $1_A\bullet a=0$.
\item  Let $A$ be a Com-PreLie bialgebra, with counit $\varepsilon$. For all $a,b \in A$, $\varepsilon(a\bullet b)=0$.
\end{enumerate}\end{lemma}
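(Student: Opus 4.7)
The plan is to derive both parts directly from the compatibility axioms by specialization, using only the unit and counit.

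For part (1), I would use the Com-PreLie distributivity $(a\shuffle b)\bullet c = (a\bullet c)\shuffle b + a\shuffle (b\bullet c)$ with $a = b = 1_A$. Since $1_A \shuffle 1_A = 1_A$ and $1_A$ is the unit for $\shuffle$, the left-hand side equals $1_A\bullet c$ while the right-hand side collapses to $2(1_A\bullet c)$. This forces $1_A\bullet c = 0$. (Alternatively, setting only $a = 1_A$ yields $(1_A\bullet c)\shuffle b = 0$ for every $b$, and then $b = 1_A$ gives the same conclusion.)

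For part (2), the plan is to apply $\varepsilon\otimes\varepsilon$ to the coproduct compatibility
$$\Delta(a\bullet b)=a^{(1)}\otimes a^{(2)}\bullet b+a^{(1)}\bullet b^{(1)}\otimes a^{(2)}\shuffle b^{(2)}.$$
On the left, the standard identity $(\varepsilon\otimes\varepsilon)\circ\Delta = \varepsilon$ (after identifying $\K\otimes\K \simeq \K$) produces $\varepsilon(a\bullet b)$. For the first summand on the right, I would apply the counit axiom $\varepsilon(a^{(1)})a^{(2)} = a$ to collapse the $a$-Sweedler pair, which yields $\varepsilon(a\bullet b)$. For the second summand, I would use that $\varepsilon$ is an algebra morphism for $\shuffle$ (automatic in a bialgebra), so $\varepsilon(a^{(2)}\shuffle b^{(2)}) = \varepsilon(a^{(2)})\varepsilon(b^{(2)})$, and then collapse both Sweedler pairs independently to recover $\varepsilon(a\bullet b)$. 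Comparing the two sides gives $\varepsilon(a\bullet b) = 2\varepsilon(a\bullet b)$, hence $\varepsilon(a\bullet b) = 0$.

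I do not foresee any serious obstacle: part (1) is a one-line specialization, and part (2) is essentially bookkeeping with Sweedler indices, the only delicate point being to remember that $\varepsilon$ is an algebra morphism with respect to $\shuffle$ so that the second summand admits the factorization needed to collapse it.
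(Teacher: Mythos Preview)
Your proposal is correct and matches the paper's proof essentially line for line: part (1) specializes the derivation axiom to $1_A\shuffle 1_A$ to obtain $1_A\bullet a = 2(1_A\bullet a)$, and part (2) applies $\varepsilon\otimes\varepsilon$ to the coproduct compatibility, uses multiplicativity of $\varepsilon$ on the $\shuffle$-term, and collapses the Sweedler sums to get $\varepsilon(a\bullet b)=2\varepsilon(a\bullet b)$.
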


\begin{proof} 1. Indeed, $1_A\bullet a=(1_A.1_A)\bullet a=(1_A\bullet a).1_A+1_A.(1_A\bullet a)=2(1_A\bullet a)$, so $1_A\bullet a=0$.\\

2. For all $a,b \in A$:
\begin{align*}
\varepsilon(a\bullet b)&=(\varepsilon \otimes \varepsilon)\circ \Delta(a\bullet b)\\
&=\varepsilon(a^{(1)})\varepsilon( a^{(2)}\bullet b)+\varepsilon(a^{(1)}\bullet b^{(1)})\varepsilon(a^{(2)}
\shuffle b^{(2)})\\
&=\varepsilon(a^{(1)})\varepsilon( a^{(2)}\bullet b)
+\varepsilon(a^{(1)}\bullet b^{(1)})\varepsilon(a^{(2)})\varepsilon(b^{(2)})\\
&=\varepsilon(a\bullet b)+\varepsilon(a\bullet b),
\end{align*}
so $\varepsilon(a\bullet b)=0$. \end{proof}\\

{\bf Remark.} Consequently, if $a$ is primitive:
$$\Delta(a\bullet b)=1_A\otimes a\bullet b+a\bullet b^{(1)}\otimes b^{(2)}.$$
So the map $b\longrightarrow a\bullet b$ is a $1$-cocycle for the Cartier-Quillen cohomology \cite{Connes}.\\

If $A$ is a Com-PreLie bialgebra, we denote by $Prim(A)$ the space of its primitive elements:
$$Prim(A)=\{a\in A\mid \Delta(a)=a\otimes 1+1\otimes a\}.$$
We define an endomorphism of $Prim(A)$ in the following way:

\begin{prop}\label{4}
Let $A$ be a Com-PreLie bialgebra. 
\begin{enumerate}
\item If $x \in Prim(A)$, then $x\bullet 1_A\in Prim(A)$.
We denote by $f_A$ the map:
$$f_A:\left\{\begin{array}{rcl}
Prim(A)&\longrightarrow&Prim(A)\\
a&\longrightarrow&a\bullet 1_A.
\end{array}\right.$$
\item If $f_A=0$, then $Prim(A)$ is a preLie subalgebra of $A$.
\end{enumerate}\end{prop}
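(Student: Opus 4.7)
The plan is to prove both parts by a direct computation of $\Delta$ applied to the relevant products, using the Com-PreLie bialgebra compatibility axiom together with Lemma \ref{3}.

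For part 1, I would take $x \in Prim(A)$, so $\Delta(x) = x \otimes 1_A + 1_A \otimes x$, and $b = 1_A$, so $\Delta(1_A) = 1_A \otimes 1_A$. Applying the defining identity
\[\Delta(a\bullet b)=a^{(1)}\otimes a^{(2)}\bullet b+a^{(1)}\bullet b^{(1)}\otimes a^{(2)}\shuffle b^{(2)},\]
the first sum expands to $x\otimes(1_A\bullet 1_A)+1_A\otimes(x\bullet 1_A)$ and the second to $(x\bullet 1_A)\otimes(1_A\shuffle 1_A)+(1_A\bullet 1_A)\otimes(x\shuffle 1_A)$. By Lemma \ref{3} the terms involving $1_A\bullet(-)$ vanish, leaving
\[\Delta(x\bullet 1_A)=1_A\otimes(x\bullet 1_A)+(x\bullet 1_A)\otimes 1_A,\]
which is exactly the condition that $x\bullet 1_A\in Prim(A)$. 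Hence $f_A$ is a well-defined endomorphism of $Prim(A)$.

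For part 2, I would take $x,y \in Prim(A)$ and again apply the compatibility formula, now with $\Delta(x) = x \otimes 1_A + 1_A \otimes x$ and $\Delta(y) = y \otimes 1_A + 1_A \otimes y$. The first sum gives $x\otimes(1_A\bullet y)+1_A\otimes(x\bullet y)$; by Lemma \ref{3} the first term vanishes. The second sum gives four terms: $(x\bullet y)\otimes 1_A$, $(x\bullet 1_A)\otimes y$, $(1_A\bullet y)\otimes x$, and $(1_A\bullet 1_A)\otimes(x\shuffle y)$. The last two vanish by Lemma \ref{3}, and the hypothesis $f_A=0$ kills the second since $x\bullet 1_A = f_A(x) = 0$. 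What remains is
\[\Delta(x\bullet y)=1_A\otimes(x\bullet y)+(x\bullet y)\otimes 1_A,\]
so $x\bullet y$ is primitive, and $Prim(A)$ is closed under $\bullet$, hence a preLie subalgebra of $A$.

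There is no real obstacle: the argument is a bookkeeping exercise in Sweedler notation. The only subtlety is making sure to correctly expand the two Sweedler sums when both $a$ and $b$ are primitive (giving four cross-terms in the second sum), and to invoke Lemma \ref{3} to eliminate every occurrence of $1_A\bullet(-)$, and, for part 2, the hypothesis $f_A=0$ to eliminate $x\bullet 1_A$ and $y\bullet 1_A$.
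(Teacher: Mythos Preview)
Your proof is correct and follows essentially the same approach as the paper's own proof: a direct expansion of $\Delta(x\bullet 1_A)$ and $\Delta(x\bullet y)$ via the compatibility axiom, followed by cancellation of the $1_A\bullet(-)$ terms using Lemma~\ref{3} and, in part~2, of the $x\bullet 1_A$ term using $f_A=0$.
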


\begin{proof} 1. Indeed, if $a$ is primitive:
\begin{align*}
\Delta(a\bullet 1_A)&=a\otimes 1_A\bullet 1_A+1_A\otimes a\bullet 1_A+a\bullet 1_A\otimes 1_A\shuffle 1_A+1_A\bullet 1_A\otimes a\shuffle 1_A\\
&=0+1_A\otimes 1_A\bullet a+a\bullet 1_A\otimes 1_A+0,
\end{align*}
so $a\bullet 1_A$ is primitive. \\

2. Let $a,b \in Prim(A)$.
\begin{align*}
\Delta(a\bullet b)&=a\otimes 1_A \bullet b+1_A\otimes a\bullet b+1_A\bullet 1_A\otimes a\shuffle b
+a\bullet 1_A\otimes b+1_A\bullet b\otimes a+a\bullet b\otimes 1_A\\
&=1_A\otimes a\bullet b+a\bullet b\otimes 1_A. 
\end{align*}
So $a\bullet b \in Prim(A)$. \end{proof}

\section{Examples on shuffle algebras}

 Let $V$ be a vector space and let $f:V\longrightarrow V$ be any linear map.
The tensor algebra $T(V)$ is given the shuffle product $\shuffle$, the half-shuffle $\prec$ and the deconcatenation coproduct $\Delta$, making it a bialgebra.
Recall that these products can be inductively defined in the following way: if $x,y \in V$, $u,v \in T(V)$:
\begin{align*}
&\left\{\begin{array}{rcl}
1\prec yv&=&0,\\
xu\prec v&=&x(u\prec v+v\prec u),\\
\end{array}\right.&\left\{\begin{array}{rcl}
1 \shuffle v&=&0,\\
xu\shuffle yv&=&x(u\shuffle yv)+y(xu\shuffle v).
\end{array}\right. \end{align*}
For any $x_1,\ldots,x_n \in V$:
$$\Delta(x_1\ldots x_n)=\sum_{i=0}^n x_1\ldots x_i\otimes x_{i+1}\ldots x_n.$$
For all linear map $F:V\longrightarrow W$, we define the map:
$$T(F):\left\{\begin{array}{rcl}
T(V)&\longrightarrow&T(W)\\
x_1\ldots x_n&\longrightarrow&F(x_1)\ldots F(x_n).
\end{array}\right.$$
This a Hopf algebra morphism from $T(V)$ to $T(W)$. \\

The subalgebra of $(T(V),\shuffle)$ generated by $V$ is denoted by $coS(V)$.
It is the largest cocommutative Hopf subalgebra of $(T(V),\shuffle,\Delta)$; it is generated by the symmetric tensors of elements of $V$.

\subsection{Com-PreLie algebra attached to a linear endomorphism}

We described in \cite{Foissyprelie2} a first family of Zinbiel-PreLie bialgebras;
coming from a problem of composition of Fliess operators in Control Theory. Let $f$ be an endomorphism of a vector space $V$.
We define a bilinear product $\bullet$ on $T(V)$ inductively on the length of words in the following way: if $x \in V$, $v,w\in T(V)$,
\begin{align*}
1 \bullet w&=0,&xv\bullet w&=x(v\bullet w)+f(x)(v \shuffle w).
\end{align*}
Then $(T(V),\prec,\bullet,\Delta)$ is a Zinbiel-PreLie bialgebra, denoted by $T(V,f)$. Moreover, $f_{T(V,f)}=f$. \\

{\bf Examples.} If $x_1,x_2,x_3\in V$, $w\in T(V)$:
\begin{align*}
x_1 \bullet w&=f(x_1) w,\\
x_1x_2\bullet w&=x_1 f(x_2)w+f(x_1)(x_2 \shuffle w),\\
x_1x_2x_3 \bullet w&=x_1x_2f(x_3)w+x_1f(x_2)(x_3 \shuffle w)+f(x_1)(x_2x_3\shuffle w).
\end{align*}
More generally, if $x_1,\ldots,x_n \in V$ and $w \in T(V)$:
$$x_1\ldots x_n \bullet w=\sum_{i=1}^n x_1\ldots x_{i-1}f(x_i)(x_{i+1}\ldots x_n \shuffle w).$$

This construction is functorial: let $V$ and $W$ be two vector spaces, $f$ an endomorphism of $V$ and $g$ an endomorphism of $W$;
let $F:V\longrightarrow W$, such that $g\circ F=F\circ f$. Then $T(F)$ is a morphism of Zinbiel-PreLie bialgebras from $T(V,f)$ to $T(W,g)$.

\begin{prop}
Let $\blacklozenge$ be a preLie product on $(T(V),\shuffle,\Delta)$, making it a Com-PreLie bialgebra, such that for all $k,l\in \N$,
$V^{\otimes k}\\blacklozenge V^{\otimes l}\subseteq V^{\otimes (k+l)}$. There exists a $f\in End(V)$, such that $(T(V),\shuffle,\blacklozenge,\Delta)=T(V,f)$.
\end{prop}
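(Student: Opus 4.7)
The plan is to extract $f$ from the action of $\blacklozenge$ on the unit, and then show by a double induction on tensor lengths that $\blacklozenge$ must coincide with the preLie product $\bullet$ of $T(V,f)$. The engine of the induction is the coproduct compatibility built into the Com-PreLie bialgebra axiom.

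First, I would define $f(x)=x\blacklozenge 1_A$ for $x\in V$. The grading hypothesis $V^{\otimes 1}\blacklozenge V^{\otimes 0}\subseteq V^{\otimes 1}$ gives $f\in\mathrm{End}(V)$ (consistent with Proposition~\ref{4}, since $V\subseteq Prim(T(V))$). Let $\bullet$ denote the preLie product of $T(V,f)$ and set $D(u,w)=u\blacklozenge w-u\bullet w$. Since $\blacklozenge$ and $\bullet$ satisfy the same Com-PreLie bialgebra compatibility with respect to the fixed $(\shuffle,\Delta)$, subtracting the two instances of Definition~\ref{1}(2)(c) gives
$$\Delta(D(u,w))=\sum u^{(1)}\otimes D(u^{(2)},w)+\sum D(u^{(1)},w^{(1)})\otimes u^{(2)}\shuffle w^{(2)}.$$
I would then prove $D\equiv 0$ by induction on $(k,n)=(\mathrm{length}(u),\mathrm{length}(w))$ in lexicographic order. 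The base cases are $D(1_A,w)=0$ by Lemma~\ref{3} and $D(x,1_A)=f(x)-f(x)=0$ for $x\in V$ by definition of $f$.

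In the inductive step, because $\Delta$ is the deconcatenation coproduct, every Sweedler pair $(u^{(1)},u^{(2)})$ coming from $\Delta(u)$ other than the extremal $(1_A,u)$ and $(u,1_A)$ has both entries of length strictly less than $k$, and analogously for $\Delta(w)$. Applying the inductive hypothesis, every summand on the right-hand side in which $D$ is evaluated at strictly smaller arguments vanishes; the only possible survivors are $1_A\otimes D(u,w)$, $D(u,w)\otimes 1_A$, and the term $D(u,1_A)\otimes w$ (from $(u^{(1)},u^{(2)})=(u,1_A)$ with $(w^{(1)},w^{(2)})=(1_A,w)$). The last one is killed by the $n=0$ base of the inner induction. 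Hence $\Delta(D(u,w))=1_A\otimes D(u,w)+D(u,w)\otimes 1_A$, so $D(u,w)$ is primitive; but by the grading hypothesis $D(u,w)\in V^{\otimes(k+n)}$ with $k+n\ge 2$ in the inductive range, and the primitives of the deconcatenation coalgebra in $V^{\otimes m}$ vanish for $m\ge 2$, forcing $D(u,w)=0$.

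The delicate point is the $n=0$ base at $k\ge 2$, which must be secured before the general step can eliminate $D(u,1_A)\otimes w$. Running the same cocycle argument on $\Delta(D(u,1_A))$ does the job: because $\Delta(1_A)=1_A\otimes 1_A$, the right-hand side reduces to $\sum u^{(1)}\otimes D(u^{(2)},1_A)+\sum D(u^{(1)},1_A)\otimes u^{(2)}$, which by the outer induction on $k$ collapses to $1_A\otimes D(u,1_A)+D(u,1_A)\otimes 1_A$, making $D(u,1_A)$ a primitive of degree $k\ge 2$ and hence zero. A more conceptual equivalent is to observe that $v\mapsto v\blacklozenge 1_A$ and $v\mapsto v\bullet 1_A$ are coderivations of the tensor coalgebra $(T(V),\Delta)$ whose projections to $V$ both equal $f$, and such coderivations are determined by their projection to $V$. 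Once this anchor is in place, the rest of the recursion is mechanical bookkeeping of the doubly indexed Sweedler sums.
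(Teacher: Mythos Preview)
Your proposal is correct and follows essentially the same argument as the paper. The paper also defines $f$ via $x\mapsto x\blacklozenge 1$ on $V$, first handles the $l=0$ case by an induction on $k$ showing the difference is a primitive of degree $\geq 2$, and then runs the general induction (on $k+l$ rather than lexicographically) using the coproduct compatibility to force the difference to be primitive and hence zero; your use of the subtracted cocycle identity for $D$ and the coderivation remark are minor repackagings of the same mechanism.
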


\begin{proof} Let $f=f_{T(V)}$. We denote by $\bullet$ the preLie product of $T(V,f)$. Let us prove that for any $x=x_1\ldots x_k, y=y_1\ldots y_l\in T(V)$, 
$x\bullet y=x\blacklozenge y$. If $k=0$, we obtain $1 \bullet y=1\blacklozenge y=0$. 
We now treat the case $l=0$. We proceed by induction on $k$. It is already done for $k=0$. If $k=1$, then $x\in V$
and $x\bullet 1=f(x)=x\blacklozenge1$. Let us assume the result at all ranks $<k$, with $k\geq 2$. Then, as the length of $x'$ and$x''$ is $<k$:
\begin{align*}
\Delta(x\bullet 1)&=x^{(1)}\otimes x^{(2)}\bullet 1+x^{(1)}\bullet 1\otimes x^{(2)}\\
&=1 \otimes x\bullet1+x'\bullet 1 \otimes 1+x'\otimes x''\bullet 1+x\otimes 1 \otimes 1\\
&=1 \otimes x\bullet1+x'\blacklozenge 1 \otimes 1+x'\otimes x''\blacklozenge 1+x\otimes 1 \otimes 1\\
&=\Delta(x\blacklozenge 1)+(x\bullet y-x\blacklozenge y)\otimes 1+1 \otimes (x\bullet y-x\blacklozenge y).
\end{align*}
We deduce that $x\bullet 1-x \blacklozenge 1$ is primitive, so belongs to $V$. As it is homogeneous of length $k\geq 2$, it is zero,
and $x\bullet 1=x\blacklozenge1$. 

We can now assume that $k,l\geq 1$. We proceed by induction on $k+l$. There is nothing left to do for $k+l=0$ or $1$.
Let us assume that the result is true at all rank $<k+l$, with $k+l\geq 2$. 
Then, using the induction hypothesis, as $x'$ and $x''$ have lengths $<k$ and $y'$ has a length $<l$:
\begin{align*}
\Delta(x\bullet y)&=1\otimes x\bullet y+x'\otimes x''\bullet y+x\otimes 1\bullet y
+x\bullet 1\otimes y+x'\bullet 1\otimes x''\shuffle y+1\bullet 1\otimes x\shuffle y\\
&+x\bullet y\otimes 1+x'\bullet y\otimes x''+1\bullet y\otimes x
+x\bullet y'\otimes y''+x'\bullet y'\otimes x''\shuffle y''+1\bullet y'\otimes x\shuffle y''\\
&=1\otimes x\bullet y+x'\otimes x''\blacklozenge y+x\otimes 1\blacklozenge y
+x\blacklozenge 1\otimes y+x'\blacklozenge 1\otimes x''\shuffle y+1\blacklozenge 1\otimes x\shuffle y\\
&+x\bullet y\otimes 1+x'\blacklozenge y\otimes x''+1\blacklozenge y\otimes x
+x\blacklozenge y'\otimes y''+x'\blacklozenge y'\otimes x''\shuffle y''+1\blacklozenge y'\otimes x\shuffle y''\\
&=\Delta(x\blacklozenge y)+(x\bullet y-x\blacklozenge y)\otimes +1\otimes (x\bullet y-x\blacklozenge y).
\end{align*}
We deduce that $x\bullet y-x\blacklozenge y$ is primitive, hence belongs to $V$. As it belongs to $V^{\otimes (k+l)}$ and $k+l\geq 2$,
it is zero. Finally, $x\bullet y=x\blacklozenge y$. \end{proof}

\begin{prop}
The Com-PreLie bialgebras $T(V,f)$ and $T(W,g)$ are isomorphic if, and only if, there exists a linear isomorphism $F:V\longrightarrow W$,
such that $g\circ F=F\circ f$. 
\end{prop}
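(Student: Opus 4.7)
The plan is to prove the two directions separately, with the converse being the straightforward application of functoriality and the forward direction relying on the fact that the primitive space of a shuffle Hopf algebra is exactly $V$.

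For the ($\Leftarrow$) direction, I would simply invoke the functoriality statement given just above the proposition: if $F:V\longrightarrow W$ is a linear map with $g\circ F=F\circ f$, then $T(F):T(V,f)\longrightarrow T(W,g)$ is a morphism of Zinbiel-PreLie bialgebras, hence in particular of Com-PreLie bialgebras. When $F$ is an isomorphism, $T(F)$ is a bijection (its inverse is $T(F^{-1})$), so $T(F)$ is the desired isomorphism.

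For the ($\Rightarrow$) direction, assume $\Phi:T(V,f)\longrightarrow T(W,g)$ is an isomorphism of Com-PreLie bialgebras. The first step is to identify $Prim(T(V))$ and $Prim(T(W))$: a direct inspection of the deconcatenation coproduct on a homogeneous tensor $x_1\ldots x_n$ shows that $\tdelta(x_1\ldots x_n)=\sum_{i=1}^{n-1}x_1\ldots x_i\otimes x_{i+1}\ldots x_n$, and an easy linear independence argument in each bidegree shows that the only primitive elements are those concentrated in $V^{\otimes 1}$, so $Prim(T(V))=V$ and similarly $Prim(T(W))=W$. Since $\Phi$ is a bialgebra isomorphism, it restricts to a linear isomorphism $F:=\Phi_{|V}:V\longrightarrow W$.

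The final step is to check $g\circ F=F\circ f$. Because $\Phi$ preserves $\bullet$ and the units, we have $\Phi(x\bullet 1_{T(V)})=F(x)\bullet 1_{T(W)}$ for every $x\in V$. By the identity $f_{T(V,f)}=f$ (and similarly for $g$) stated in the construction of $T(V,f)$, this reads $F(f(x))=g(F(x))$, which is exactly $g\circ F=F\circ f$. The only mildly delicate point in the whole argument is the identification of primitives, but this follows directly from the explicit formula for $\Delta$ on basis tensors; once that is in hand the rest is an immediate consequence of the Com-PreLie morphism property applied to $1_{T(V)}$.
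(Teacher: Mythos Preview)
Your proof is correct and follows essentially the same approach as the paper: functoriality for the backward direction, and for the forward direction the identification $Prim(T(V))=V$ together with the computation $\Phi(x\bullet 1)=\Phi(f(x))=F(f(x))$ on one side and $\Phi(x)\bullet\Phi(1)=g(F(x))$ on the other. The paper is slightly terser about the primitive-space identification, simply asserting $V=Prim(T(V))$, but otherwise the arguments coincide.
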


\begin{proof}
If such an $F$ exists, by functoriality $T(F)$ is an isomorphism from $T(V,f)$ to $T(W,g)$. Let us assume that $\phi:T(V,f)\longrightarrow T(V,g)$
is an isomorphism of Com-PreLie bialgebras. Then $\phi(1)=1$, and $\phi$ induces an isomorphism from $V=Prim(T(V))$ to $W=Prim(T(W))$, 
denoted by $F$. For all $x\in V$:
$$\phi(x\bullet 1)=\phi(f(x))=F\circ f(x)=F(x)\bullet 1=g\circ F(x).$$
So such an $F$ exists. \end{proof}

\subsection{Com-PreLie algebra attached to a linear form}

Let $V$ be a a vector space, $f:V\longrightarrow \K$ be a linear form, and $\lambda \in \K$.

\begin{theo}\label{7}
Let $\bullet$ be the product on $T(V)$ such that for all $x_1,\ldots,x_m, y_1,\ldots, y_n \in V$:
$$x_1\ldots x_m \bullet y_1\ldots y_n=\sum_{i=0}^{n-1} \lambda^i f(x_1)f(y_1)\ldots f(y_i) x_2\ldots x_m \shuffle y_{i+1}\ldots y_n.$$
Then $(T(V),\shuffle,\bullet)$ is a Com-PreLie algebra. It is denoted by $T(V,f,\lambda)$.
\end{theo}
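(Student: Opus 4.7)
The plan is to factor the defining formula for $\bullet$ through two auxiliary linear maps on $T(V)$, reducing both Com-PreLie axioms to a single identity about these maps. Define $\pi : T(V) \to T(V)$ by $\pi(1) = 0$ and $\pi(xu) = f(x)\,u$ for $x \in V$, $u \in T(V)$, and $\Psi : T(V) \to T(V)$ by $\Psi(1) = 0$ and $\Psi(y_1 \cdots y_n) = \sum_{i=0}^{n-1}\lambda^i f(y_1)\cdots f(y_i)\, y_{i+1}\cdots y_n$. The formula in the theorem then reads compactly as $a \bullet v = \pi(a) \shuffle \Psi(v)$, and the shuffle recursion $xu \shuffle x'u' = x(u \shuffle x'u') + x'(xu \shuffle u')$ implies at once that $\pi$ is a derivation of $\shuffle$: $\pi(a \shuffle b) = \pi(a) \shuffle b + a \shuffle \pi(b)$.

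Given this derivation property, the Com-PreLie compatibility collapses to one line:
\[
(a \shuffle b) \bullet c = \pi(a \shuffle b) \shuffle \Psi(c) = (\pi(a) \shuffle b + a \shuffle \pi(b)) \shuffle \Psi(c) = (a \bullet c) \shuffle b + a \shuffle (b \bullet c).
\]
For the right preLie identity, applying the factorization twice together with the Leibniz rule for $\pi$ gives
\begin{align*}
(a \bullet b) \bullet c &= \pi^2(a) \shuffle \Psi(b) \shuffle \Psi(c) + \pi(a) \shuffle \pi(\Psi(b)) \shuffle \Psi(c), \\
a \bullet (b \bullet c) &= \pi(a) \shuffle \Psi(\pi(b) \shuffle \Psi(c)).
\end{align*}
The $\pi^2(a)$-term is already symmetric in $(b,c)$, so the preLie identity reduces to the key identity
\[
\pi(\Psi(b)) \shuffle \Psi(c) - \Psi(\pi(b) \shuffle \Psi(c)) = \pi(\Psi(c)) \shuffle \Psi(b) - \Psi(\pi(c) \shuffle \Psi(b)).
\]

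Establishing this key identity is the main obstacle, since it mixes $\shuffle$, $\pi$, and $\Psi$ while $\Psi$ itself is not a derivation of $\shuffle$. I would handle it by direct combinatorial expansion. Writing $\Psi = (\epsilon \otimes \mathrm{id}_+) \circ \Delta$ with $\epsilon(w_1 \cdots w_k) = \lambda^k f(w_1)\cdots f(w_k)$ and $\epsilon(1) = 1$, and using the bialgebra compatibility $\Delta(u \shuffle v) = \Delta(u) \shuffle \Delta(v)$, one obtains an explicit formula for $\Psi(u \shuffle v)$ as a sum over pairs of prefix lengths $(t_u, t_v)$ weighted by binomial coefficients $\binom{t_u + t_v}{t_u}$. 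Plugging this into the key identity and reorganising the resulting double sum by a change of variables together with the hockey-stick identity $\sum_{j=0}^{s} \binom{k-1+j}{k-1} = \binom{k+s}{k}$, both sides become sums over pairs $(k,t)$ with coefficient $\bigl[1 - \binom{k+t}{k}\bigr]\lambda^{k+t-1} f(y_1)\cdots f(y_k) f(z_1)\cdots f(z_t)$ multiplying $(y_{k+1}\cdots y_n) \shuffle (z_{t+1}\cdots z_p)$; the two sides differ only in the boundary terms ($t=0$ on one side, $k=0$ on the other), where the coefficient $1 - \binom{k+t}{k}$ vanishes. The remaining common contribution is symmetric in $(b,c)$ by $\binom{k+t}{k} = \binom{k+t}{t}$ and the commutativity of $\shuffle$, which closes the proof.
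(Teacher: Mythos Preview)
Your proposal is correct and follows essentially the same route as the paper: the paper introduces the maps $\partial$ and $\phi$ (your $\pi$ and $\Psi$), writes $u\bullet v=\partial(u)\shuffle\phi(v)$, checks that $\partial$ is a $\shuffle$-derivation, and reduces the preLie axiom to the symmetry of $\partial\circ\phi(u)\shuffle\phi(v)-\phi(\partial(u)\shuffle\phi(v))$, which it then verifies by a direct double-sum computation culminating in the same hockey-stick identity you invoke. Your packaging of $\Psi$ as $(\epsilon\otimes\mathrm{id}_+)\circ\Delta$ and use of the bialgebra compatibility to expand $\Psi(u\shuffle v)$ is a slightly cleaner way to organise the combinatorics, but the substance is identical.
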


{\bf Examples.} If $x_1,x_2,x_3\in V$, $w\in T(V)$:
\begin{align*}
x_1 \bullet w&=f(x_1) w,\\
x_1x_2\bullet w&=x_1 f(x_2)w+f(x_1)(x_2 \shuffle w),\\
x_1x_2x_3 \bullet w&=x_1x_2f(x_3)w+x_1f(x_2)(x_3 \shuffle w)+f(x_1)(x_2x_3\shuffle w).
\end{align*}

In particular if $x_1=\ldots=x_n=y_1=\ldots=y_n=x$:

\begin{lemma}\label{8}
Let $x \in V$. We put $f(x)=\nu$ and $\mu=\lambda f(x)$. Then, for all $m,n\geq 0$, in $T(V,f,\lambda)$:
$$x^m \bullet x^n=\nu \sum_{j=m}^{m+n-1} \mu^{m+n-j-1} \binom{j}{m-1}x^j.$$
\end{lemma}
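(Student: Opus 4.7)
The plan is to avoid any induction and simply specialize the defining formula of Theorem~\ref{7} to the case $x_1=\ldots=x_m=y_1=\ldots=y_n=x$, then evaluate the resulting shuffle and re-index.

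First I would write down the formula of Theorem~\ref{7} in the specialized case. Since every letter equals $x$, the scalar factor becomes
\[
\lambda^i f(x_1) f(y_1) \ldots f(y_i) = \lambda^i \nu^{i+1} = \nu \, (\lambda \nu)^i = \nu \mu^i,
\]
while the tensor factor becomes $x^{m-1} \shuffle x^{n-i}$. Thus
\[
x^m \bullet x^n = \nu \sum_{i=0}^{n-1} \mu^i \, \bigl(x^{m-1} \shuffle x^{n-i}\bigr).
\]

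Next I would use the classical identity in the shuffle algebra
\[
x^a \shuffle x^b = \binom{a+b}{a} x^{a+b},
\]
(which counts the ways to interleave two sequences of indistinguishable letters), to get $x^{m-1} \shuffle x^{n-i} = \binom{m+n-i-1}{m-1} x^{m+n-i-1}$. Substituting yields
\[
x^m \bullet x^n = \nu \sum_{i=0}^{n-1} \mu^i \binom{m+n-i-1}{m-1} x^{m+n-i-1}.
\]

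Finally I would set $j = m+n-i-1$, so that $i = m+n-j-1$; as $i$ runs from $0$ to $n-1$, the new index $j$ runs from $m+n-1$ down to $m$. This reindexing immediately produces the claimed formula
\[
x^m \bullet x^n = \nu \sum_{j=m}^{m+n-1} \mu^{m+n-j-1} \binom{j}{m-1} x^j.
\]
There is no real obstacle here: the argument is a direct specialization followed by a standard shuffle evaluation and a change of summation variable. The only thing to be careful about is keeping track of the exponents of $\nu$ versus $\mu$ (one $\nu$ comes from the leading $f(x_1)$, the remaining $\lambda^i f(x)^i$ combine into $\mu^i$).
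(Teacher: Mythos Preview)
Your proof is correct and is exactly the approach the paper intends: Lemma~\ref{8} is stated in the paper as the immediate specialization ``In particular if $x_1=\ldots=x_m=y_1=\ldots=y_n=x$'' of the defining formula in Theorem~\ref{7}, with no separate proof given. Your computation just makes explicit the shuffle evaluation $x^a\shuffle x^b=\binom{a+b}{a}x^{a+b}$ and the reindexing $j=m+n-i-1$, which is precisely what is needed.
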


The proof of theorem \ref{7} will use definition \ref{9} and lemma \ref{10}:

\begin{defi}\label{9}
Let $\partial$ and $\phi$ be the linear maps defined by:
\begin{align*}
\partial:&\left\{\begin{array}{rcl}
T(V)&\longrightarrow& T(V)\\
1&\longrightarrow&0,\\
x_1\ldots x_n&\longrightarrow&f(x_1)x_2\ldots x_n,
\end{array}\right.&
\phi:&\left\{\begin{array}{rcl}
T(V)&\longrightarrow& T(V)\\
1&\longrightarrow&0,\\
x_1\ldots x_n&\longrightarrow&\displaystyle\sum_{i=0}^{n-1}\lambda^i f(x_1)\ldots f(x_i)x_{i+1}\ldots x_n.
\end{array}\right. \end{align*}\end{defi}

\begin{lemma} \label{10}\begin{enumerate}
\item For all $u,v \in T(V)$:
\begin{enumerate}
\item $\partial(u \shuffle v)=\partial(u)\shuffle v+u\shuffle \partial(v)$.
\item $\partial \circ \phi(u) \shuffle \phi(v)-\phi(\partial(u) \shuffle \phi(v))=\partial \circ \phi(v) \shuffle \phi(u)-\phi(\partial(v) \shuffle \phi(u))$.
\end{enumerate}
\item For all $u\in T(V,f,\lambda)$:
\begin{align*}
\Delta \circ \partial(u)&=(\partial \otimes Id)\circ \Delta(u),&
\Delta \circ \phi(u)&=(\phi\otimes Id)\circ \Delta(u)+1 \otimes \phi(u).
\end{align*}\end{enumerate}\end{lemma}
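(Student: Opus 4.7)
The three assertions have quite different flavors: $1(a)$ is a routine derivation check, part $2$ is a direct bookkeeping calculation with the deconcatenation, and $1(b)$ is the one place where the interaction of $\partial$ and $\phi$ really matters and will be the main obstacle.

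For $1(a)$, I would induct on $|u|+|v|$. If $u = 1$ or $v = 1$ the claim is immediate from $\partial(1)=0$ and $1 \shuffle w = w$. Otherwise write $u = xu_0$, $v = yv_0$ with $x, y \in V$ and apply the shuffle recursion $u \shuffle v = x(u_0 \shuffle v) + y(u \shuffle v_0)$. Applying $\partial$ picks off the scalars $f(x)$ and $f(y)$ from the two leading letters, so both $\partial(u \shuffle v)$ and $\partial u \shuffle v + u \shuffle \partial v$ collapse to $f(x)(u_0 \shuffle v) + f(y)(u \shuffle v_0)$.

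For $1(b)$, set $A(u,v) := \partial\phi(u) \shuffle \phi(v) - \phi(\partial(u) \shuffle \phi(v))$; the claim is that $A(u,v) = A(v,u)$. I would first establish the useful recursion $\phi(xu_0) = xu_0 + \lambda f(x)\,\phi(u_0)$ for $x \in V$, $u_0 \in T(V)$, by splitting the defining sum of $\phi$ at $i = 0$ and reindexing; this also yields $\partial\phi(xu_0) = f(x)u_0 + \lambda f(x)\,\partial\phi(u_0)$, while $\partial(xu_0) = f(x)u_0$ is immediate. I would then prove $A(u,v) = A(v,u)$ by induction on $|u|+|v|$: writing $u = xu_0$ and $v = yv_0$, and substituting these recursions (together with $1(a)$, used to distribute $\partial$ across shuffles when expanding $\phi$ of a shuffle), both $A(u,v)$ and $A(v,u)$ split into an $f(x)$-weighted part, an $f(y)$-weighted part, and $\lambda$-weighted instances of $A$ at smaller total length. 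The main obstacle here is purely bookkeeping: organizing the expansion so that the asymmetric $f(x)$-only and $f(y)$-only contributions genuinely exchange under $u \leftrightarrow v$ and so that the remaining pieces match via the inductive hypothesis. An alternative route avoiding induction is to plug in the explicit formulas for $\partial\phi(u)$, $\phi(v)$, $\partial(u)$ directly, rewriting $A(u,v)$ as a double sum indexed by the prefix lengths used inside $\phi(u)$ and $\phi(v)$, and then verifying symmetry of this double sum termwise.

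For part $2$, I would compute on a basis word $u = x_1 \cdots x_n$. For the $\partial$-identity, the summand $i=0$ of $\Delta(u) = \sum_{i=0}^n x_1\cdots x_i \otimes x_{i+1}\cdots x_n$ is killed by $\partial \otimes \mathrm{Id}$ because $\partial(1)=0$, and the surviving terms reassemble, after the reindexing $i \mapsto i-1$, into $f(x_1)\,\Delta(x_2\cdots x_n) = \Delta(\partial u)$. For the $\phi$-identity, I would apply deconcatenation to each summand of $\phi(u) = \sum_i \lambda^i f(x_1)\cdots f(x_i)\,x_{i+1}\cdots x_n$, producing a double sum over pairs $0 \le i \le j \le n$. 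On the other side, $(\phi \otimes \mathrm{Id})\Delta(u)$ gives exactly the same double sum but restricted to the strict inequality $i < j$, since $\phi(1)=0$ kills the contributions where the left tensor factor degenerates to $1$; the missing $i = j$ diagonal is precisely $1 \otimes \phi(u)$, accounting for the correction term.
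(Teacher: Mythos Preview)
Your treatment of $1(a)$ and of part $2$ is essentially the paper's own argument: both reduce to the shuffle recursion for $1(a)$ and to deconcatenation bookkeeping on a basis word for part $2$, with the same ``missing diagonal $i=j$ is $1\otimes\phi(u)$'' observation.

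For $1(b)$ the paper does not induct: it takes your ``alternative route'' and computes $\phi(\partial u\shuffle\phi(v))$ and $\partial\phi(u)\shuffle\phi(v)$ explicitly for words $u=x_1\ldots x_m$, $v=y_1\ldots y_n$, obtaining closed double sums with binomial coefficients $\binom{i+j}{i}$, and then checks that the difference is symmetric in $u,v$ by inspection. So your fallback plan is exactly what the paper does.

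Your primary inductive plan for $1(b)$, however, has a genuine gap as written. With $u=xu_0$ one gets
\[
A(xu_0,v)=f(x)\bigl[u_0\shuffle\phi(v)-\phi(u_0\shuffle\phi(v))\bigr]+\lambda f(x)\,\partial\phi(u_0)\shuffle\phi(v),
\]
and the bracketed term $u_0\shuffle\phi(v)-\phi(u_0\shuffle\phi(v))$ is \emph{not} of the form $A(\,\cdot\,,\,\cdot\,)$; it involves $\phi$ of a shuffle, for which no distribution law is available (part $1(a)$ only helps with $\partial$, not with $\phi$). So the induction does not close on the single statement $A(u,v)=A(v,u)$ without either an auxiliary identity relating $w-\phi(w)$ to $\partial\phi(w)$ (which holds only up to a scalar correction), or a strengthened inductive hypothesis you have not formulated. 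If you want the induction to work you must identify and prove that auxiliary identity first; otherwise, your alternative explicit-sum approach is the clean path, and it is precisely what the paper carries out.
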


\begin{proof} $1$. $(a)$ This is obvious if $u=1$ or $v=1$, as $\partial(1)=0$. Let us assume that $u,v$ are nonempty words.
We put $v=xu'$,$v=yv'$, with $x,y \in V$. Then:
\begin{align*}
\partial(u\shuffle v)&=\partial(x(u'\shuffle v)+y(u\shuffle v'))\\
&=f(x)u'\shuffle v+f(y)u\shuffle v'\\
&=(f(x)u') \shuffle v+u \shuffle (f(y)v')\\
&=\partial(u)\shuffle v+u\shuffle \partial(v).
\end{align*}

$1$. $(b)$ Let us take $u=x_1\ldots x_m$ and $y=y_1\ldots y_n$ be two words of $T(V)$ of respective lengths $m$ and $n$. 
First, observe that $\phi(\partial u \shuffle \phi(v))$ is a linear span of terms:
$$\lambda^{i+j-1}f(x_1)\ldots f(x_i)f(y_1)\ldots f(y_j) x_{i+1}\ldots x_m\shuffle y_{j+1}\ldots y_m,$$
with $1\leq i\leq m$, $0\leq j\leq n$, $(i,j) \neq (0,0)$. Let us compute the coefficient of such a term:
\begin{itemize}
\item If $j<n$, it is $\displaystyle \sum_{p=0}^j\binom{i-1+j-p}{i-1}=\sum_{p=i-1}^{i+j-1} \binom{p}{i-1}=\binom{i+j}{i}$.
\item If $j=n$, its is $\displaystyle \sum_{p=0}^{n-1}\binom{i-1+j-p}{i-1}=\sum_{p=i}^{i+j-1} \binom{p}{i-1}=\sum_{p=i-1}^{i+j-1} \binom{p}{i-1}-1=\binom{i+j}{i}-1$.
\end{itemize}
We obtain:
\begin{align*}
\phi(\partial u \shuffle \phi(v))&=\sum_{i=1}^m \sum_{j=0}^n \lambda^{i+j-1} \binom{i+j}{i}f(x_1)\ldots f(x_i)f(y_1)\ldots f(y_j)x_{i+1}\ldots x_m\shuffle
y_{j+1}\ldots y_n\\
&-\sum_{i=1}^{m-1}\lambda^{i+n-1}f(x_1)\ldots f(x_i)f(y_1)\ldots f(y_n)x_{i+1}\ldots x_m\\
&-\lambda^{m+n-1}\binom{m+n}{m}f(x_1)\ldots f(x_m)f(y_1)\ldots f(y_n)\\
&=\sum_{i=1}^m \sum_{j=1}^n \lambda^{i+j-1} \binom{i+j}{i}f(x_1)\ldots f(x_i)f(y_1)\ldots f(y_j)x_{i+1}\ldots x_m\shuffle
y_{j+1}\ldots y_n\\
&+\sum_{i=1}^m\lambda^{i-1}f(x_1)\ldots f(x_i)x_{i+1}\ldots x_m\shuffle y_1\ldots y_n\\
&-\sum_{i=1}^{m-1}\lambda^{i+n-1}f(x_1)\ldots f(x_i)f(y_1)\ldots f(y_n)x_{i+1}\ldots x_m\\
&-\lambda^{m+n-1}\binom{m+n}{m}f(x_1)\ldots f(x_m)f(y_1)\ldots f(y_n).
\end{align*}
Moreover:
\begin{align*}
\partial \circ \phi(u)\shuffle \phi(v)&=\sum_{i=1}^m\sum_{j=0}^{n-1}\lambda^{i+j-1} \binom{i+j}{i}f(x_1)\ldots f(x_i)f(y_1)\ldots f(y_j)x_{i+1}\ldots x_m\shuffle
y_{j+1}\ldots y_n\\
&=\sum_{i=1}^{m-1}\sum_{j=1}^{n-1}\lambda^{i+j-1} \binom{i+j}{i}f(x_1)\ldots f(x_i)f(y_1)\ldots f(y_j)x_{i+1}\ldots x_m\shuffle
y_{j+1}\ldots y_n\\
&+\sum_{j=1}^{n-1} \lambda^{j+m-1}f(x_1)\ldots f(x_m)f(y_1)\ldots f(y_j)y_{j+1}\ldots y_n\\
&+\sum_{i=1}^m\lambda^{i-1}f(x_1)\ldots f(x_i)x_{i+1}\ldots x_m\shuffle y_1\ldots y_n.
\end{align*}
Hence:
\begin{align*}
&\partial \circ \phi(u)\shuffle \phi(v)-\phi(\partial u \shuffle \phi(v))\\
&=\sum_{i=1}^{m-1}\sum_{j=1}^{n-1}\lambda^{i+j-1} \binom{i+j}{i}f(x_1)\ldots f(x_i)f(y_1)\ldots f(y_j)x_{i+1}\ldots x_m\shuffle
y_{j+1}\ldots y_n\\
&-\sum_{i=1}^m \sum_{j=1}^n \lambda^{i+j-1} \binom{i+j}{i}f(x_1)\ldots f(x_i)f(y_1)\ldots f(y_j)x_{i+1}\ldots x_m\shuffle
y_{j+1}\ldots y_n\\
&+\lambda^{m+n-1}\binom{m+n}{m}f(x_1)\ldots f(x_m)f(y_1)\ldots f(y_n)\\
&+\sum_{j=1}^{n-1} \lambda^{j+m-1}f(x_1)\ldots f(x_m)f(y_1)\ldots f(y_j)y_{j+1}\ldots y_n\\
&+\sum_{i=1}^{m-1}\lambda^{i+n-1}f(x_1)\ldots f(x_i)f(y_1)\ldots f(y_n)x_{i+1}\ldots x_m.
\end{align*}
The three first rows are symmetric in $u$ and $v$, whereas the sum of the fourth and fifth rows is symmetric in $u$ and $v$.
So $\partial \circ \phi(u)\shuffle \phi(v)-\phi(\partial u \shuffle \phi(v))$ is symmetric in $u$ and $v$.\\

2. Let us take $u=x_1\ldots x_n$, with $x_1,\ldots,x_n \in V$. Then:
\begin{align*}
\Delta \circ \partial(u)&=f(x_1)\sum_{i=1}^n x_2\ldots x_i \otimes x_{i+1}\ldots x_n\\
&=\sum_{i=1}^n \partial(x_1\ldots x_i)\otimes x_{i+1}\ldots x_n+\partial(1)\otimes x_1\ldots x_n\\
&=\sum_{i=0}^n \partial(x_1\ldots x_i)\otimes x_{i+1}\ldots x_n\\
&=(\partial \otimes Id)\circ \Delta(u).
\end{align*}
Moreover:
\begin{align*}
\Delta \circ \phi(u)&=\sum_{i=0}^{n-1}\lambda^if(x_1)\ldots f(x_i)\Delta(x_{i+1}\ldots x_n)\\
&=\sum_{i=0}^{n-1}\sum_{j=i}^n \lambda^i f(x_1)\ldots f(x_i)x_{i+1}\ldots x_j\otimes x_{j+1}\ldots x_n\\
&=\sum_{j=0}^n \sum_{i=0}^j  \lambda^i f(x_1)\ldots f(x_i)x_{i+1}\ldots x_j\otimes x_{j+1}\ldots x_n-\lambda^n f(x_1)\ldots f(x_n)\otimes 1\\
&=\sum_{j=0}^n \phi(x_1\ldots x_j) \otimes x_{j+1}\ldots x_n+\sum_{j=0}^{n-1} \lambda^j f(x_1)\ldots f(x_j)\otimes x_{j+1}\ldots x_n\\
&=\sum_{j=0}^n \phi(x_1\ldots x_j) \otimes x_{j+1}\ldots x_n+1 \otimes \left(\sum_{j=0}^{n-1} \lambda^j f(x_1)\ldots f(x_j)x_{j+1}\ldots x_n\right)\\
&=(\phi \otimes Id)\circ \Delta(u)+1 \otimes \phi(u).
\end{align*}\end{proof}

\begin{proof} (Theorem \ref{7}). By definition, for all $u,v \in T(V)$:
$$u\bullet v=\partial(u) \shuffle \phi(v).$$
Let $u,v,w \in T(V)$. By lemma \ref{10}-1:
\begin{align*}
(u\shuffle v) \bullet w&=\partial(u\shuffle v) \shuffle \phi(w)\\
&=\partial(u)\shuffle v \shuffle \phi(w)+u\shuffle \partial(v) \shuffle \phi(w)\\
&=\partial(u) \shuffle \phi(w)\shuffle v+u\shuffle \partial(v) \shuffle \phi(w)\\
&=(u\bullet w)\shuffle v+u\shuffle (v\bullet w).
\end{align*}
Moreover:
\begin{align*}
(u\bullet v)\bullet w-u\bullet (v\bullet w)&=(\partial(u)\shuffle \phi(v))\bullet w-u\bullet(\partial(v)\shuffle \phi(w))\\
&=\partial(\partial (u)\shuffle \phi(v))\shuffle \phi(w)-\partial(u)\shuffle \phi(\partial(v)\shuffle \phi(w))\\
&=\partial^2(u)\shuffle \phi(v)\shuffle \phi(w)+\partial(u)\shuffle \left(\partial \circ \phi(v)\shuffle \phi(w)- \phi(\partial(v)\shuffle \phi(w))\right).
\end{align*}
By lemma \ref{10}-2, this is symmetric in $v$ and $w$. Consequently, $T(V,f,\lambda)$ is Com-PreLie. \end{proof}\\

This construction is functorial. Let $(V,f)$ and $(W,g)$ be two spaces equipped with a linear form
and let $F:V\longrightarrow W$ be a map such that $g\circ F=f$. Then $T(F)$ is a Com-PreLie algebra morphism from $T(V,f,\lambda)$ to $T(W,g,\lambda)$. 

\begin{prop}
$(coS(V),\shuffle,\bullet,\Delta)$ is a Com-PreLie bialgebra, denoted by $coS(V,f,\lambda)$.
\end{prop}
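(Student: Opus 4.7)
My plan is to verify two things: (i) $coS(V)$ is stable under $\bullet$, so it inherits the Com-PreLie algebra structure from Theorem \ref{7}; and (ii) the coproduct-preLie compatibility of Definition \ref{1} holds on $coS(V)$. For (i), I would use the factorisation $u\bullet v=\partial(u)\shuffle\phi(v)$ established in the proof of Theorem \ref{7}; since $coS(V)$ is closed under $\shuffle$, stability reduces to checking that $\partial$ and $\phi$ each send $coS(V)$ to $coS(V)$. Each is a one-line computation on a generating symmetric tensor $x_1\shuffle\cdots\shuffle x_n$: $\partial$ returns $\sum_k f(x_k)\, x_1\shuffle\cdots\widehat{x_k}\cdots\shuffle x_n$, and $\phi$ a $\lambda$-weighted sum over subsets of $\{1,\ldots,n\}$, both visibly in $coS(V)$.

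For (ii), I would compute $\Delta(u\bullet v)=\Delta(\partial u)\shuffle\Delta(\phi v)$ using Lemma \ref{10}(2) and the tensor shuffle rule $(a\otimes b)\shuffle(c\otimes d)=(a\shuffle c)\otimes(b\shuffle d)$. Substituting $\Delta(\partial u)=(\partial\otimes Id)\Delta u$ and $\Delta(\phi v)=(\phi\otimes Id)\Delta v+1\otimes\phi(v)$, the expansion collapses to
\[
\Delta(u\bullet v)=\sum(u^{(1)}\bullet v^{(1)})\otimes(u^{(2)}\shuffle v^{(2)})+\sum\partial(u^{(1)})\otimes(u^{(2)}\shuffle\phi(v)),
\]
where the contribution of the $1\otimes\phi(v)$ piece produces the second sum. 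The target right-hand side contains $\sum u^{(1)}\otimes(u^{(2)}\bullet v)=\sum u^{(1)}\otimes(\partial(u^{(2)})\shuffle\phi(v))$ in place of that second sum, so the whole statement reduces to the key identity
\[
(\partial\otimes Id)\circ\Delta=(Id\otimes\partial)\circ\Delta \qquad \text{on } coS(V).
\]

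This key identity is the only place where cocommutativity of $coS(V)$ enters, and I expect it to be the main, if short, point of the proof. For $u\in coS(V)$, Lemma \ref{10}(2) gives $(\partial\otimes Id)\Delta u=\Delta(\partial u)$; by the stability established in (i), $\partial u\in coS(V)$, so $\Delta(\partial u)$ is cocommutative. Applying the flip $\tau$ and using the intertwining $\tau\circ(\partial\otimes Id)=(Id\otimes\partial)\circ\tau$ together with the cocommutativity $\tau\Delta u=\Delta u$ then yields $(Id\otimes\partial)\Delta u$, which closes the argument. The only real obstacle is the bookkeeping of the tensor shuffle in the expansion of $\Delta(u\bullet v)$; once that is in place, the stability checks and the cocommutativity argument are essentially formal.
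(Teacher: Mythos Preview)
Your proposal is correct and follows essentially the same route as the paper: stability of $coS(V)$ under $\partial$ and $\phi$ (the paper uses that $\partial$ is a $\shuffle$-derivation rather than computing on a symmetric tensor, but this is equivalent), then the expansion of $\Delta(\partial(u)\shuffle\phi(v))$ via Lemma~\ref{10}(2), reducing everything to the identity $(\partial\otimes Id)\Delta=(Id\otimes\partial)\Delta$ on $coS(V)$, established by cocommutativity. Your cocommutativity argument is slightly more elaborate than needed---one only needs $\tau\Delta u=\Delta u$, not the additional fact that $\partial u\in coS(V)$---but this does no harm.
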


\begin{proof} Let us first prove that $coS(V)$ is stable under $\bullet$. It is enough to prove that it is stable under $\partial$ and $\phi$.
Let us first consider $\partial$. As it is a derivation for $\shuffle$, it is enough to prove that $\partial(V) \subseteq coS(V)$, which is obvious
as $\partial(V)\subset \K$. Let us now consider $\phi$. Let $x_1,\ldots,x_k \in V$.
\begin{align*}
\phi(x_1\shuffle \ldots \shuffle x_k)&=\sum_{\sigma \in \mathfrak{S}_k} \phi(x_{\sigma(1)}\ldots x_{\sigma(k)})\\
&=\sum_{i=0}^{k-1} \sum_{\sigma \in \mathfrak{S}_k} \mu^if(x_{\sigma(1)})\ldots f(x_{\sigma(i)})x_{\sigma(i+1)}\ldots x_{\sigma(k)}\\
&=\sum_{i=0}^{k-1} \sum_{1\leq k_1<\ldots<k_i\leq k} i!\mu^i\prod_{j=1}^i f(x_{k_i}) x_1\shuffle \widehat{x_{k_1}}\shuffle\ldots \shuffle 
\widehat{x_{k_i}}\shuffle\ldots \shuffle x_k. 
\end{align*}
This is an element of $coS(V)$, so $coS(V)$ is stable under $\bullet$.\\

Let us prove now the compatibility between $\bullet$ and the coproduct of $coS(V)$.
As $coS(V)$ is cocommutative, lemma \ref{10} implies that for all $u\in coS(V)$:
$$\Delta \circ \partial(u)=\partial(u^{(1)})\otimes u^{(2)}=\partial(u^{(2)})\otimes u^{(1)}=u^{(1)}\otimes \partial(u^{(2)}).$$
Let us consider $u,v \in coS(V)$. Then, by lemma \ref{10}:
\begin{align*}
\Delta(u\bullet v)&=\Delta(\partial (u) \shuffle \phi(v))\\
&=(\Delta\circ \partial(u))\shuffle \Delta \circ \phi(v)\\
&=(\Delta\circ \partial u)\shuffle(\Phi(v^{(1)})\otimes v^{(2)}+1 \otimes \phi(v))\\
&=\partial(u^{(1)})\shuffle \Phi(v^{(1)})\otimes u^{(2)}\shuffle v^{(2)}+u^{(1)}\shuffle 1 \otimes \partial(u^{(2)})\shuffle \phi(v)\\
&=u^{(1)}\bullet v^{(1)}\otimes u^{(2)}\shuffle v^{(2)}+u^{(1)}\otimes u^{(2)}\bullet v.
\end{align*}
So $coS(V)$ is a Com-PreLie bialgebra. \end{proof}\\

Note that $f_{coS(V,f,\lambda)}=0$. The preLie product induced on $Prim(coS(V))=V$ is given by $x\star y=f(x)y$.

\begin{cor}
Let $V$ be a vector space, $f\in V^*$, $\lambda \in \K$. We give $S(V)$ its usual product $m$ and coproduct $\bfDelta$,
defined by $\bfDelta(v)=v\otimes 1+1\otimes v$ for all $v\in V$, and the product $\bullet$ defined by:
\begin{enumerate}
\item $1\bullet x=0$ for any $x \in S(V)$.
\item $\displaystyle x\bullet x_1\ldots x_k=\sum_{I\subsetneq \{1,\ldots,k\}} |I|! \lambda^{|I|} f(x) \prod_{i\in I} f(x_i) \prod_{i\notin I} x_i$, 
for all $x,x_1,\ldots,x_k \in V$.
\item $\displaystyle x_1\ldots x_k\bullet x=\sum_{i=1}^k x_1\ldots (x_i \bullet x)\ldots x_k$ for any $x_1,\ldots,x_k \in V$, $x\in S(V)$. 
\end{enumerate} 
Then $(S(V),m,\bullet,\bfDelta)$ is a Com-PreLie bialgebra, denoted by $S(V,f,\lambda)$. 
\end{cor}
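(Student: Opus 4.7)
The plan is to transport the Com-PreLie bialgebra structure of $coS(V,f,\lambda)$ onto $S(V)$ via a canonical Hopf algebra isomorphism, and then verify that the resulting preLie product is the one described by the three formulas. In this way, all the Com-PreLie bialgebra axioms come for free from the preceding proposition, and the only work left is an identification of products.

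First I would construct the algebra morphism $\Phi:(S(V),m)\longrightarrow(coS(V),\shuffle)$ with $\Phi(v)=v$ for $v\in V$, using that $(coS(V),\shuffle)$ is a commutative algebra containing $V$ and that $S(V)$ is free commutative on $V$; explicitly $\Phi(v_1\cdots v_k)=v_1\shuffle\cdots\shuffle v_k$. By definition $V$ generates $coS(V)$, so $\Phi$ is surjective, and a dimension count in each length (valid since $\operatorname{char}\K=0$) makes it injective. Since $V$ lies in the primitives of both bialgebras and both $m$ and $\shuffle$ are bialgebra structures on the relevant objects, $\Phi$ is a coalgebra morphism as well, hence a Hopf algebra isomorphism. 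Transporting the preLie product of $coS(V,f,\lambda)$ through $\Phi$ automatically yields a Com-PreLie bialgebra $(S(V),m,\bullet,\bfDelta)$; it only remains to show that this transported product is the one given by $(1)$--$(3)$.

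Item $(1)$ is immediate from Lemma \ref{3}. For item $(3)$, the Com-PreLie axiom transported through $\Phi$ reads $(ab)\bullet c=(a\bullet c)b+a(b\bullet c)$ in $S(V)$; thus for fixed $c$, the operator $a\mapsto a\bullet c$ is a derivation of $m$, and a simple induction on $k$ delivers $x_1\cdots x_k\bullet x=\sum_{i=1}^k x_1\cdots(x_i\bullet x)\cdots x_k$. For item $(2)$, I take $x,x_1,\ldots,x_k\in V$ and use the defining formula of the preLie product on $T(V,f,\lambda)$ when the left factor is a single letter, giving $x\bullet(x_1\shuffle\cdots\shuffle x_k)=f(x)\,\phi(x_1\shuffle\cdots\shuffle x_k)$. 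The preceding proposition already computes $\phi(x_1\shuffle\cdots\shuffle x_k)$ as a sum, over strictly proper subsets of indices $\{k_1<\cdots<k_i\}\subsetneq\{1,\ldots,k\}$, of $i!\,\lambda^i\prod_j f(x_{k_j})$ times the shuffle of the remaining letters. Applying $\Phi^{-1}$ replaces each such shuffle by the corresponding commutative monomial in $S(V)$, and the sum is rewritten as an indexing over proper subsets $I\subsetneq\{1,\ldots,k\}$, yielding exactly formula $(2)$.

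The main obstacle is the bookkeeping of the coefficient $|I|!\,\lambda^{|I|}$ in $(2)$: the factor $|I|!$ counts the orderings of $I$ among the permutations summed in the iterated shuffle $x_1\shuffle\cdots\shuffle x_k$, while $\lambda^{|I|}$ arises from the powers of $\lambda$ in the definition of $\phi$. Once this combinatorial matching is made, which was essentially done inside the proof of the preceding proposition, all three formulas fall out and the corollary is established.
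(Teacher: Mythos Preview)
Your proposal is correct and follows essentially the same approach as the paper: transport the Com-PreLie bialgebra structure from $coS(V,f,\lambda)$ to $S(V)$ via the Hopf algebra isomorphism $\theta:S(V)\to coS(V)$ sending $v\mapsto v$, and verify formula $(2)$ by computing $\theta(x)\bullet\theta(x_1\cdots x_k)=f(x)\,\phi(x_1\shuffle\cdots\shuffle x_k)$ using the expression for $\phi$ on shuffle products established inside the proof of the preceding proposition. The paper's proof is terser and focuses only on formula $(2)$, leaving $(1)$ and $(3)$ implicit, whereas you spell these out; but the substance is the same.
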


\begin{proof} There is a Hopf algebra isomorphism:
$$\theta:\left\{\begin{array}{rcl}
(S(V),m,\bfDelta)&\longrightarrow&(coS(V),\shuffle,\Delta)\\
v\in V&\longrightarrow&v.
\end{array}\right.$$
Let $v,x_1,\ldots,x_k \in V$. 
\begin{align*}
\theta(v)\bullet \theta(x_1\ldots x_k)&=v\bullet x_1\shuffle \ldots \shuffle x_k\\
&=f(v)\shuffle \phi(x_1\shuffle \ldots \shuffle x_k)\\
&=f(v)\sum_{i=0}^{k-1} \sum_{1\leq k_1<\ldots<k_i\leq k} i!\mu^i\prod_{j=1}^i f(x_{k_i}) 
x_1\shuffle \widehat{x_{k_1}}\shuffle\ldots \shuffle \widehat{x_{k_i}}\shuffle\ldots \shuffle x_k\\
&=\theta\left(\sum_{I\subsetneq \{1,\ldots,k\}}|I|!\mu^i\prod_{i\in I} f(x_i) \prod_{i\notin I} x_i\right).
\end{align*}
Therefore, as $coS(V)$ is a Com-PreLie algebra, $S(V)$ is also a Com-PreLie bialgebra. \end{proof}

\begin{prop}\label{13}
Let us assume that $f \neq 0$. Then:
\begin{enumerate}
\item $(T(V),\prec,\bullet)$ is a Zinbiel-PreLie algebra if, and only if, $dim(V)=1$.
\item $(T(V),\shuffle,\bullet,\Delta)$ is a Com-PreLie bialgebra if, and only if, $dim(V)=1$.
\end{enumerate}\end{prop}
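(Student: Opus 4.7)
\bigskip
\noindent The plan is to treat the two implications separately, with the $\Rightarrow$ direction for both parts reducing to the same forced relation $f(x)y=f(y)x$ for all $x,y\in V$; once this relation is established, picking $x_{0}$ with $f(x_{0})=1$ (possible since $f\neq 0$) gives $y=f(y)x_{0}\in\K x_{0}$ for every $y\in V$, hence $\dim V\leq 1$.

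For the $\Leftarrow$ direction, if $\dim V=1$ then every symmetric tensor on $V$ is a scalar multiple of a tensor power, so $coS(V)=T(V)$ and part $2$ follows at once from the Com-PreLie bialgebra structure on $coS(V,f,\lambda)$ established above. For part $1$, I will compute the Zinbiel-PreLie axiom on $a=x^{m}$, $b=x^{n}$, $c=x^{p}$ using Lemma \ref{8} together with the formula $x^{m}\prec x^{n}=\binom{m+n-1}{n}x^{m+n}$; after collecting the coefficient of $x^{k}$ for $m+n\leq k\leq m+n+p-1$, the axiom will reduce to the binomial identity
$$\binom{m+n-1}{n}\binom{k}{m+n-1}=\binom{k-n}{m-1}\binom{k-1}{n}+\binom{k-m}{n-1}\binom{k-1}{m-1},$$
which I will verify by rewriting each side as $\dfrac{k!}{(m-1)!\,n!\,(k-m-n+1)!}$.

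For the $\Rightarrow$ direction of part $1$, taking $a,b,c\in V$ gives $a\prec b=ab$, $a\bullet c=f(a)c$ and $b\bullet c=f(b)c$, so the Zinbiel-PreLie axiom becomes $f(a)(bc+cb)=f(a)cb+f(b)ac$, i.e.\ $f(a)bc=f(b)ac$ in $V^{\otimes 2}$; choosing any $c\neq 0$ then yields the forced relation. For part $2$, I will take $a=xy$, $b=z$ with $x,y,z\in V$, so that $xy\bullet z=f(x)(yz+zy)$; expanding $\Delta(xy\bullet z)$ and comparing it to the Com-PreLie bialgebra right-hand side computed from $\Delta(xy)=1\otimes xy+x\otimes y+xy\otimes 1$ and $\Delta(z)=1\otimes z+z\otimes 1$ cancels everything except the single discrepancy $(f(x)y-f(y)x)\otimes z$, and taking $z\neq 0$ again yields the forced relation. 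The main bookkeeping will be the six-term expansion of $a^{(1)}\bullet b^{(1)}\otimes a^{(2)}\shuffle b^{(2)}$ in part $2$; the remaining verifications are routine.
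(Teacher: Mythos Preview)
Your proposal is correct and follows essentially the same approach as the paper: for the $\Rightarrow$ direction in both parts you extract the relation $f(x)y=f(y)x$ from the same low-degree test cases the paper uses (the paper specializes by taking $x\in\ker f$ or $f(x)=1$, while you keep $x,y$ general, but the computation is identical), and for the $\Leftarrow$ direction you invoke $T(V)=coS(V)$ for part~2 and reduce part~1 via Lemma~\ref{8} to the same binomial identity the paper records. The only cosmetic difference is variable naming; your verification of the identity by expressing both sides as $\dfrac{k!}{(m-1)!\,n!\,(k-m-n+1)!}$ is exactly what the paper's ``simple computation'' amounts to.
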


\begin{proof}
1. $\Longrightarrow$. Let $y\in V$, such that $f(y)=1$. Note that $y\neq 0$. Let $x \in V$, such that $f(x)=0$. Then:
\begin{align*}
(x\prec y)\bullet y&=xy\bullet y=f(x) y\shuffle y=0,\\
(x\bullet y)\prec y+x\prec (y\bullet y)&=f(x)y\prec y+x \prec f(y)y=0+f(y) x\prec y=xy. 
\end{align*}
As $T(V,f,\lambda)$ is Zinbiel-PreLie, $xy=0$. As $y\neq 0$, $x=0$; we obtain that $f$ is injective, so $dim(V)=1$.\\

1. $\Longrightarrow$. We use the notations of lemma \ref{8}. It is enough to prove that for all $k,l,m \geq 0$,
$(x^k \prec x^l)\bullet x^m=(x^k \bullet x^m) \prec x^l+x^k \prec (x^l \bullet x^m)$.
$$(x^k\prec x^l)\bullet x^m=\lambda\sum_{j=k+l}^{k+l+m-1} \mu^{k+l+m-j-1} \binom{j}{k+l-1}\binom{k+l-1}{k-1}x^j,$$
and:
\begin{align*}
&(x^k\bullet x^m)\prec x^l+x^k \prec (x^l\bullet x^m)\\
&=\lambda\sum_{j=k}^{k+m-1}\mu^{k+m-j-1}\binom{j}{k-1}x^j \prec x^l+\lambda\sum_{j=l}^{l+m-1}\mu^{l+m-1-j}\binom{j}{k-1}x^k\prec x^j\\
&=\lambda\sum_{j=k}^{k+m-1}\mu^{k+m-j-1}\binom{j}{k-1}\binom{j+l-1}{j-1}x^{l+j}
+\lambda \sum_{j=l}^{l+m-1}\mu^{l+m-1-j}\binom{j}{k-1}\binom{k+j-1}{k-1}x^{k+j}\\
&=\lambda\sum_{j=k+l}^{k+l+m-1}\mu^{k+l+m-j-1}\binom{j-l}{k-1}\binom{j-1}{j-l-1}x^j
+\lambda\sum_{j=k+l}^{k+l+m-1}\mu^{k+l+m-j-1}\binom{j-k}{l-1}\binom{j-1}{k-1}x^j.
\end{align*}
Moreover, a simple computation proves that:
$$\binom{j-l}{k-1}\binom{j-1}{j-l-1}+\binom{j-k}{l-1}\binom{j-1}{k-1}=\binom{j}{k+l-1}\binom{k+l-1}{k-1}.$$
So $T(V,f,\lambda)$ is Zinbiel-PreLie.\\

2. $\Longrightarrow$. Let us choose $z\in V$, nonzero, and $x \in V$ such that $f(x)=1$. Then:
$$\Delta(xy\bullet z)=\Delta(f(x) y\shuffle z)=xy\bullet z\otimes 1+1 \otimes xy\bullet z+y\otimes z+z\otimes y,$$
whereas:
\begin{align*}
&(xy)^{(1)}\otimes (xy)^{(2)}\bullet z+(xy)^{(1)}\bullet z^{(1)}\otimes (xy)^{(2)}\shuffle z^{(2)}\\
&=xy\otimes 1 \bullet z+x\otimes y\bullet z+1 \otimes xy \bullet z\\
&+xy\bullet z\otimes 1+xy\bullet 1\otimes z+x\bullet z\otimes y+x\bullet 1 \otimes y\shuffle z+1 \bullet z\otimes xy
+1\bullet 1 \otimes xy\shuffle z\\
&=xy\bullet z\otimes 1+1 \otimes xy\bullet z+f(y)x\otimes z+z\otimes y.
\end{align*}
So, for all $y\in V$, $f(y)x\otimes z=y\otimes z$. As $z\neq 0$, $f(y)x=y$: $V=Vect(x)$ is one-dimensional.\\

$\Longrightarrow$. In this case, $T(V)=coS(V)$, so is a Com-PreLie bialgebra. \end{proof}

\begin{prop}
The Com-PreLie bialgebras $coS(V,f,\lambda)$ and $coS(W,g,\mu)$ are isomorphic if, and only if, one of the following assertion holds:
\begin{enumerate}
\item $dim(V)=dim(W)$, and $f$ and $g$ are both zero.
\item $dim(V)=dim(W)$, $\lambda=\mu$ and $f$ and $g$ are  both nonzero.
\end{enumerate}
\end{prop}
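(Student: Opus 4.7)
I verify the two implications separately. For the sufficient direction, split into the two listed cases. When $f=g=0$, the operator $\partial$ is identically zero on both sides, so the preLie product $u\bullet v=\partial(u)\shuffle\phi(v)$ vanishes on each Com-PreLie bialgebra; any linear isomorphism $F:V\to W$ (which exists since $\dim V=\dim W$) then extends via $T(F)$ to a shuffle Hopf algebra isomorphism $coS(V)\to coS(W)$ that trivially intertwines the zero preLie products. When $f,g\neq 0$ and $\lambda=\mu$, pick $v_0\in V$ and $w_0\in W$ with $f(v_0)=g(w_0)=1$; since $\ker f$ and $\ker g$ are hyperplanes in vector spaces of equal dimension, any linear isomorphism $\ker f\to \ker g$ combined with $v_0\mapsto w_0$ produces a linear isomorphism $F:V\to W$ satisfying $g\circ F=f$, and the functoriality noted just before the proposition ensures that $T(F)$ restricted to $coS(V)$ is a Com-PreLie bialgebra isomorphism onto $coS(W,g,\lambda)$.

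For the necessary direction, let $\Phi:coS(V,f,\lambda)\to coS(W,g,\mu)$ be a Com-PreLie bialgebra isomorphism. Since $\Phi$ is a unital coalgebra isomorphism, it restricts to a linear isomorphism $F:V=Prim(coS(V))\to W=Prim(coS(W))$, so $\dim V=\dim W$. The induced preLie product on primitives is $x\bullet y=f(x)y$ (recalled just after the construction); applying $\Phi$ to this identity and comparing with $F(x)\bullet F(y)=g(F(x))F(y)$ yields
\[f(x)F(y)=g(F(x))F(y)\qquad(x,y\in V).\]
If $V\neq 0$, choosing $y$ with $F(y)\neq 0$ forces $g\circ F=f$, so $f=0$ if and only if $g=0$; this places us in Case 1 or Case 2.

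In Case 2 it remains to deduce $\lambda=\mu$. The key observation is that $\lambda$ is invisible on primitives, since the induced bracket $f(x)y$ is $\lambda$-free; one must therefore exploit a length-two element. Using the formula $x\bullet(yz)=f(x)yz+\lambda f(x)f(y)z$ from Theorem \ref{7}, I compute $x\bullet(y\shuffle z)$ in both bialgebras, apply $\Phi$, and use $g\circ F=f$; the resulting identity collapses to
\[\lambda f(x)\bigl(f(y)F(z)+f(z)F(y)\bigr)=\mu f(x)\bigl(f(y)F(z)+f(z)F(y)\bigr),\]
and choosing $x=y=z$ with $f(x)\neq 0$ (available since $f\neq 0$) makes both $f(x)$ and $F(x)$ nonzero, forcing $\lambda=\mu$. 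The only mild subtlety is this realization that $\lambda$ must be extracted at length two rather than at the primitive level; otherwise the argument is a routine chase through the definitions.
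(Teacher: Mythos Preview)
Your proof is correct and follows essentially the same approach as the paper's: both directions match closely, and for the extraction of $\lambda=\mu$ you compute $x\bullet(y\shuffle z)$ and then specialize to $x=y=z$, whereas the paper directly computes $x\bullet x^2$ (with $x^2=\frac{x\shuffle x}{2}$ in $coS(V)$) for a chosen $x$ with $f(x)=1$. This is the same idea, just written at a slightly different level of generality before specialization.
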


\begin{proof} If $dim(V)=dim(W)$, and $f$ and $g$ are both zero, then $\bullet=0$ in both these Com-preLie bialgebras.
Take any linear isomorphism $F$ from $V$ to $W$, then the restriction of $T(F)$ as an algebra morphism from $coS(V)$ to $coS(W)$
is an isomorphism of Com-PreLie bialgebras.

If $dim(V)=dim(W)$, $\lambda=\mu$ and $f$ and $g$ are  both nonzero, there exists an isomorphism $F:V\longrightarrow W$
such that $g\circ F=f$. By functoriality, $T(V,f,\lambda)$ and $T(W,g,\lambda)$ are isomorphic via $T(F)$. The restriction of $T(F)$
induces an isomorphism from $coS(V,f,\lambda)$ to $coS(W,g,\lambda)$.\\

Let us assume that $\phi:coS(V,f,\lambda)\longrightarrow coS(W,g,\mu)$ is an isomorphism of Com-PreLie bialgebras.
It induces an isomorphism from $Prim(coS(V))=V$ to $Prim(coS(W))=W$, denoted by $F$: consequently, $dim(V)=dim(W)$.
Let us choose $y\in V$, nonzero. For all $x\in V$:
$$\phi(x\bullet y)=\phi(f(x)y)=f(x)F(y)=\phi(x)\bullet \phi(y)=F(x)\bullet F(y)=g\circ F(x)F(y).$$
As $F$ is an isomorphism, for all $x\in V$, $f(x)=g\circ F(x)$. So $f$ and $g$ are both zero or are both nonzero.
Let us assume that they are nonzero. We choose $x\in V$, such that $f(x)=1$. Then:
$$\phi(x^2)=\phi\left(\frac{x\shuffle x}{2}\right)=\frac{\phi(x)\shuffle \phi(x)}{2}=F(x)^2.$$
Hence:
\begin{align*}
\phi(x)\bullet \phi(x^2)&=F(x)\bullet F(x)^2&\phi(x\bullet x^2)&=\phi(f(x)x^2+\lambda f(x)^2x)\\
&=g\circ F(x) F(x)^2+\mu g\circ F(x)^2 F(x)&&=F(x)^2+\lambda F(x).\\
&=F(x)^2+\mu F(x).
\end{align*}
As $x\neq 0$, $F(x)\neq 0$, so $\lambda=\mu$. \end{proof}

\subsection{Com-PreLie algebra associated to a preLie algebra}

\begin{theo}
Let $(V,\star)$ be a preLie algebra. We define a product on $T(V)$ by:
$$x_1\ldots x_k \bullet y_1\ldots y_l=\sum_{i=1}^k x_1\ldots x_{i-1} (x_i \star y_1) (x_{i+1}\ldots x_l \shuffle y_2\ldots y_l),$$
for all $x_1,\ldots,x_k,y_1,\ldots,y_l \in V$; by convention, this is equal to $0$ if $k=0$ or $l=0$.
Then $(T(V),\prec,\bullet,\Delta)$ is a Zinbiel-PreLie bialgebra, denoted by $T(V,\star)$. 
\end{theo}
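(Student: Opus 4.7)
The axioms to verify for $(T(V),\prec,\bullet,\Delta)$ are exactly those that involve the new product $\bullet$: the Com-PreLie compatibility $(a\shuffle b)\bullet c=(a\bullet c)\shuffle b+a\shuffle(b\bullet c)$, the Zinbiel-PreLie compatibility $(a\prec b)\bullet c=(a\bullet c)\prec b+a\prec(b\bullet c)$, the right preLie identity for $\bullet$, and the coproduct compatibility $\Delta(a\bullet b)=a^{(1)}\otimes a^{(2)}\bullet b+a^{(1)}\bullet b^{(1)}\otimes a^{(2)}\shuffle b^{(2)}$. All remaining axioms are standard properties of the shuffle Hopf algebra $(T(V),\shuffle,\prec,\Delta)$. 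The key technical ingredient, obtained by isolating the $i=1$ term of the defining sum, is the recursion
$$xu\bullet yv=(x\star y)(u\shuffle v)+x(u\bullet yv),\qquad x,y\in V,\ u,v\in T(V),$$
together with $1\bullet w=w\bullet 1=0$; every subsequent induction will be driven by it.

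I first prove the Com-PreLie axiom by induction on $|a|+|b|$. For $a=xu$, $b=yv$, $c=zw$ all nonempty, the shuffle recursion gives $a\shuffle b=x(u\shuffle yv)+y(xu\shuffle v)$. Applying the $\bullet$-recursion to each summand and then the induction hypothesis to $(u\shuffle yv)\bullet zw$ and $(xu\shuffle v)\bullet zw$ (whose left arguments have strictly smaller total length), and expanding the right-hand side $(a\bullet c)\shuffle b+a\shuffle(b\bullet c)$ by the same recursions, reduces the identity to a term-by-term match that works by commutativity of $\shuffle$. The Zinbiel-PreLie axiom then needs no new induction: with $a=xu$ one has $a\prec b=x(u\shuffle b)$, so the $\bullet$-recursion yields $(a\prec b)\bullet c=(x\star z)(u\shuffle b\shuffle w)+x((u\shuffle b)\bullet zw)$ for $c=zw$, and substituting the Com-PreLie identity for $(u\shuffle b)\bullet c$ lines the result up directly with the analogously expanded $(a\bullet c)\prec b+a\prec(b\bullet c)$.

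For the preLie identity, put $P(a,b,c):=(a\bullet b)\bullet c-a\bullet(b\bullet c)$; the task is to show $P(a,b,c)=P(a,c,b)$. I induct on $|a|$, the case $a=1$ being trivial. For $a=xu$, $b=yv$, $c=zw$, repeated application of the recursion together with the Com-PreLie identity collapses $P$ into
\begin{align*}
P(a,b,c)&=\bigl[(x\star y)\star z-x\star(y\star z)\bigr](u\shuffle v\shuffle w)\\
&\quad+(x\star y)\bigl((u\bullet c)\shuffle v\bigr)+(x\star z)\bigl((u\bullet b)\shuffle w\bigr)+x\,P(u,b,c).
\end{align*}
Antisymmetrising in $(b,c)$, the mixed terms $(x\star y)((u\bullet c)\shuffle v)$ and $(x\star z)((u\bullet b)\shuffle w)$ cancel pairwise, the associator bracket vanishes by the right preLie axiom on $(V,\star)$, and $xP(u,b,c)=xP(u,c,b)$ by the induction hypothesis.

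For the coproduct compatibility I induct on $|a|$. For $a=xu$, split $a\bullet b=(x\star y)(u\shuffle v)+x(u\bullet yv)$ by the recursion (with $b=yv$); compute $\Delta$ of each summand using the identity $\Delta(xp)=1\otimes xp+(L_x\otimes\mathrm{Id})\Delta(p)$, where $L_x$ denotes left concatenation by $x$, together with the fact that $\Delta$ is an algebra morphism for $\shuffle$; substitute the induction hypothesis for $\Delta(u\bullet b)$; and expand $\Delta(a)=1\otimes a+(L_x\otimes\mathrm{Id})\Delta(u)$ on the right-hand side in order to match. The main obstacle is precisely this final step: once all contributions are unfolded, there are many tensor components, and matching them requires a careful pairing of the ``primitive-like'' piece $(x\star y)(u\shuffle v)$ against the recursive piece $x(u\bullet b)$, while keeping track of which factors in the Sweedler sums equal $1$ and which lie in $T(V)_+$.
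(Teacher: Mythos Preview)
Your argument is correct and follows essentially the same route as the paper: the recursion $xu\bullet yv=(x\star y)(u\shuffle v)+x(u\bullet yv)$ drives inductions establishing, in turn, the derivation property over the (half-)shuffle, the preLie identity, and the coproduct compatibility. The only cosmetic differences are that you prove the Com-PreLie axiom first and deduce the Zinbiel-PreLie one from it (the paper does the reverse), and that you induct on $|a|$ alone where the paper inducts on total length; your displayed formula for the associator $P(xu,b,c)$ is in fact the cleaned-up version of the paper's, obtained after one application of the Com-PreLie identity, and makes the symmetry in $(b,c)$ transparent.
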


{\bf Examples.} Let $x_1,x_2,x_3,y\in V$, $w\in T(V)$.
\begin{align*}
x_1\bullet yw&=(x_1\star y)w,\\
x_1x_2\bullet yw&=(x_1\star y)(x_2\shuffle w)+x_1(x_2\star y) w,\\
x_1x_2x_3\bullet yw&=(x_1\star y)(x_2x_3\shuffle w)+x_1(x_2\star y)(x_3\shuffle w)+x_1x_2(x_3\star y)w.
\end{align*}

\begin{proof} First, remark that for all $x,y\in V$, for all $u,v\in T(V)$:
$$xu\bullet yv=(x\star y)u\shuffle v+x(u\bullet yv).$$
Let us prove that for all $a,b,c\in T(V)$, $(a\prec b)\bullet c=(a\bullet c)\prec b+a\prec (b\bullet c)$.
This is obvious if one of $a,b,c$ is equal to $1$, as $1 \bullet d=d\bullet 1=0$ for all $d$.
We now assume that $a,b,c$ are nonempty words of respective lengths $k$, $l$ and $m$, and we proceed by induction on $k+l+m$.
There is nothing to do if $k+l+m\leq 2$. Let us assume the result at rank $k+l+m-1$. We put $a=xu$, $b=v$, $c=zw$, avec $x,z\in V$. 
\begin{align*}
(xu \prec v)\bullet zw&=(x(u\shuffle v))\bullet zw\\
&=x\star z(u\shuffle v\shuffle w)+x((u\shuffle v)\bullet zw)\\
&=x\star z(u\shuffle v\shuffle w)+x((u\bullet zw)\shuffle v+u\shuffle(v\bullet zw))\\
&=(x\star z(u\shuffle w)) \prec v+x(u\bullet zw)\prec v+xu\prec (v\bullet zw)\\
&=(xu\bullet v)\prec zw+xu\prec (v\bullet zw).
\end{align*}

Let us now prove that for all $a,b,c\in T(V)$, $a\bullet (b\bullet c)-(a\bullet b)\bullet c=a\bullet (c\bullet b)-(a\bullet c)\bullet b$.
If one of $a$, $b$, $c$ is equal to $1$, this is obvious. We now assume that $a,b,c$ are nonempty words of respective lengths $k$, $l$ and $m$, 
and we proceed by induction on $k+l+m$. There is nothing to do if $k+l+m\leq 2$. Let us assume the result at rank $k+l+m-1$. 
We put $a=xu$, $b=yv$, $c=zw$, avec $x,y,z\in V$. 
\begin{align*}
(xu\bullet yv)\bullet zw&=(x\star y(u\shuffle v))\bullet zw+(x(u\bullet yv))\bullet zw\\
&=(x\star y)\star z(u\shuffle v \shuffle w)+x\star y((u\shuffle v)\bullet zw)\\
&+x\star z((u\bullet yv)\shuffle w)+x((u\bullet yv)\bullet zw);\\
xu\bullet (yv\bullet zw)&=xu\bullet (y\star z(v\shuffle w)+y(v\bullet zw))\\
&=x\star (y\star z)u\shuffle v\shuffle w+x u\bullet(y\star z(v\shuffle w))\\
&+x\star y(u\shuffle (v\bullet zw))+x(u\bullet y(v\bullet zw)).
\end{align*}
Hence:
\begin{align*}
(xu\bullet yv)\bullet zw-xu\bullet (yv\bullet zw)&=((x\star y)\star z-x\star (y\star z))(u\shuffle v \shuffle w)\\
&+x\star y((u\shuffle v)\bullet zw)+x\star z((u\bullet yv)\shuffle w)\\
&+x((u\bullet yv)\bullet zw-u\bullet (yv\bullet zw)).
\end{align*}
As $\star$ is preLie and $\shuffle$ is commutative, the first row is symmetric in $yv$ and $zw$.
The second row is obviously symmetric in $yv$ and $zw$,and by the induction hypothesis, the last row also is.
So the preLie relation is satisfied for $xu$, $yv$ and $zw$.\\

Let us prove the compatibility with the coproduct. Let $a,b \in T(V)$. Let us prove that:
$$\Delta(a\bullet b)=a^{(1)}\otimes a^{(2)}\bullet b+a^{(1)}\bullet b^{(1)}\otimes a^{(2)}\shuffle b^{(2)}.$$
This is immediate if $a$ or $b$ is equal to $1$. 
We now assume that $a$ and $b$ are nonempty words of respective lengths $k$ and $l$, and we proceed by induction on $k+l$.
There is nothing to do if $k+l\leq 1$. Let us assume the result at rank $k+l-1$. We put $a=xu$ and $b=yv$, $x,y\in V$. 
\begin{align*}
\Delta(xu\bullet yv)&=\Delta(x\star y(u\shuffle v)+x(u\bullet yv))\\
&=(x\star y) u^{(1)}\shuffle v^{(1)}\otimes u^{(2)}\shuffle v^{(2)}+1\otimes x\star y(u\shuffle v)\\
&+xu^{(1)}\otimes u^{(2)}\bullet yv+x(u^{(1)}\bullet yv^{(1)})\otimes u^{(2)}\shuffle v^{(2)}\\
&+xu^{(1)}\bullet 1\otimes u^{(2)}\shuffle yv+1\otimes x(u\bullet yv)\\
&=xu^{(1)}\otimes u^{(2)}\bullet yv+1\otimes xu\bullet yv\\
&+(x\star y) u^{(1)}\shuffle v^{(1)}\otimes u^{(2)}\shuffle v^{(2)}\\
&=(xu)^{(1)}\otimes (xu)^{(2)}\bullet yv\\
&+(xu)^{(1)}\bullet (yv)^{(1)}\otimes (xu)^{(2)}\shuffle (yv)^{(2)}.
\end{align*}
So $T(V,\star)$ is indeed a Zinbiel-PreLie bialgebra. \end{proof}\\

This is also a functorial construction. If $F:(V,\star)\longrightarrow (W,\star)$ is a preLie algebra morphism, then $T(F)$
is a Zinbiel-PreLie algebra morphism.\\

Note that $f_{T(V,\star)}=0$. The preLie product induced on $Prim(T(V))=V$ is given by $\star$.

\begin{prop} 
Let $\blacklozenge$ be a product on $T(V)$, such that $(T(V),\shuffle,\bullet,\Delta)$ is a Zinbiel-PreLie bialgebra, with 
$V^{\otimes k}\blacklozenge V^{\otimes l}\subseteq V^{\otimes (k+l-1)}$ for all $k,l\in \N$. There exists a preLie product $\star$ on $V$,
such that $(T(V),\prec,\\blacklozenge,\Delta)=T(V,\star)$.
\end{prop}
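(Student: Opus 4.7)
The strategy parallels the uniqueness argument for $T(V,f)$ given earlier in this section: restrict $\blacklozenge$ to $V$ to extract a preLie product $\star$, and then use the coproduct compatibility together with the grading hypothesis to force $\blacklozenge$ to coincide with the induced product on $T(V,\star)$ throughout.

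By the grading hypothesis with $k=l=1$ we have $V\blacklozenge V\subseteq V$, so setting $x\star y:=x\blacklozenge y$ for $x,y\in V$ defines a binary operation on $V$. Iterating, $(x\star y)\star z$ and $x\star(y\star z)$ both lie in $V$, and the preLie identity for $\blacklozenge$ on these three elements restricts to show that $\star$ is a preLie product on $V$. Let $\bullet$ denote the preLie product of $T(V,\star)$; we must prove $\blacklozenge=\bullet$. First, the boundary behavior: Lemma~\ref{3} gives $1\blacklozenge b=0$ for all $b$, and one checks that $a\blacklozenge 1=0$ for every $a$ as well. Indeed, for $a=x\in V$, Proposition~\ref{4} places $x\blacklozenge 1$ in $\mathrm{Prim}(T(V))=V$ while the grading places it in $V^{\otimes 0}=\K$, forcing it to vanish; for longer $a$, writing $a=x\prec u$ with $x\in V$ and $u$ shorter, the Zinbiel--PreLie relation $(x\prec u)\blacklozenge 1=(x\blacklozenge 1)\prec u+x\prec(u\blacklozenge 1)$ gives $a\blacklozenge 1=0$ by induction on length.

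The main step is the equality $a\blacklozenge b=a\bullet b$ for $a\in V^{\otimes k}$, $b\in V^{\otimes l}$, proved by induction on $k+l$. The cases where $a$ or $b$ equals $1$ are handled above, and $k=l=1$ is the definition of $\star$. For $k+l\geq 3$ with $k,l\geq 1$, expand $\Delta(a\blacklozenge b)$ using the Com-PreLie compatibility and the reduced coproduct $\tdelta$. Every term on the right-hand side is either of the form $c\blacklozenge d$ with $|c|+|d|<k+l$, to which the induction hypothesis applies, or involves a factor $c\blacklozenge 1$ or $1\blacklozenge c$ which vanishes; the analogous expansion of $\Delta(a\bullet b)$ has exactly the same shape. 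Subtracting, one obtains
$$\Delta(a\blacklozenge b-a\bullet b)=(a\blacklozenge b-a\bullet b)\otimes 1+1\otimes(a\blacklozenge b-a\bullet b),$$
so the difference is primitive and lies in $V$. But the grading hypothesis simultaneously places it in $V^{\otimes(k+l-1)}$; since $k+l-1\geq 2$, the difference vanishes.

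The main difficulty is the coproduct bookkeeping: two separate expansions, of $\Delta(a\blacklozenge b)$ and $\Delta(a\bullet b)$, must be matched term by term and compared modulo the inductive identifications. This is routine but tedious, and is essentially identical to what was done in the proof of the $T(V,f)$ uniqueness statement. The grading hypothesis is used twice, once through the vanishing of $c\blacklozenge 1$ established in the second paragraph, and once to finish off the final primitive residue.
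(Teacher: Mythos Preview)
Your proposal is correct and follows essentially the same approach as the paper: extract $\star$ from the restriction to $V$, kill $a\blacklozenge 1$ by combining the grading with the Zinbiel--PreLie identity, and then run the coproduct-comparison induction on $k+l$ to reduce $a\blacklozenge b-a\bullet b$ to a primitive element of degree $\geq 2$. The only cosmetic difference is in the base case $x\blacklozenge 1=0$ for $x\in V$: the paper uses $\varepsilon(x\blacklozenge 1)=0$ together with the grading $x\blacklozenge 1\in\K$, whereas you invoke Proposition~\ref{4} to place it in $V\cap\K=(0)$; both are fine.
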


\begin{proof} By hypothesis, $V\blacklozenge V\subseteq V$: $V$ is a preLie subalgebra of $T(V)$. We denote its preLie product by $\star$,
and by $\bullet$ the preLie product of $T(V,\star)$.  Let us prove that for any $x=x_1\ldots x_k, y=y_1\ldots y_l\in T(V)$, 
$x\bullet y=x\blacklozenge yl$.  If $k=0$, we obtain $1 \bullet y=1' \bullet y=0$. 
We now treat the case $l=0$: let us prove that $x\blacklozenge 1=0$ by induction on $k$. It is already done for $k=0$. If $k=1$, then $x\in V$,
so $x\blacklozenge 1\in \K$ by homogeneity. Moreover, $\varepsilon(x\blacklozenge 1)=0$, so $x\blacklozenge 1=0$.
Let us assume the result at rank $k-1$, with $k\geq 2$. We put $u=x_2\ldots x_k$. Then:
$$x\blacklozenge 1=(x_1\prec u)\blacklozenge 1=(x_1\blacklozenge 1)\prec u+x_1\prec (u\blacklozenge 1)=0+0=0.$$

We can now assume that $k,l\geq 1$. We proceed by induction on $k+l$. There is nothing to do for $k+l=0$ or $1$.
If $k+l=2$, then $k=l=1$, and $x\bullet y=x\star y=x\blacklozenge y$. Let us assume that the result is true at all rank $<k+l$, with $k+l\geq 3$. 
Then, using the induction hypothesis, as $x'$ and $x''$ have lengths $<k$ and $y'$ has a length $<l$:
\begin{align*}
\Delta(x\bullet y)&=1\otimes x\bullet y+x'\otimes x''\bullet y+x\otimes 1\bullet y+x\bullet 1\otimes y+x'\bullet 1\otimes x''\shuffle y+1\bullet 1\otimes x\shuffle y\\
&+x\bullet y\otimes 1+x'\bullet y\otimes x''+1\bullet y\otimes x+x\bullet y'\otimes y''+x'\bullet y'\otimes x''\shuffle y''+1\bullet y'\otimes x\shuffle y''\\
&=1\otimes x\bullet y+x'\otimes x''\blacklozenge y+x\otimes 1\blacklozenge y
+x\blacklozenge 1\otimes y+x'\blacklozenge 1\otimes x''\shuffle y+1\blacklozenge 1\otimes x\shuffle y\\
&+x\bullet y\otimes 1+x'\blacklozenge y\otimes x''+1\blacklozenge y\otimes x
+x\blacklozenge y'\otimes y''+x'\blacklozenge y'\otimes x''\shuffle y''+1\blacklozenge y'\otimes x\shuffle y''\\
&=\Delta(x\blacklozenge y)+(x\bullet y-x\blacklozenge y)\otimes +1\otimes (x\bullet y-x\blacklozenge y).
\end{align*}
We deduce that $x\bullet y-x\blacklozenge y$ is primitive, so belongs to $V$. As it belongs to $V^{\otimes (k+l-1)}$ and $k+l-1\geq 2$,
it is zero. So $x\bullet y=x\blacklozenge y$. \end{proof}

\begin{prop}\begin{enumerate}
\item Let $(V,\star)$ and $(V',\star')$ be two preLie algebras. The Com-PreLie bialgebras $T(V,\star)$ and $T(V',\star')$ are isomorphic if,
and only if, the preLie algebras  $(V,\star)$ and $(V',\star')$ are isomorphic.
\item Let $(V,\star)$ be a preLie algebra and $g:W\longrightarrow W$ be an endomorphism. The Com-PreLie bialgebras $T(V,\star)$ and $T(W,g)$ 
are isomorphic if, and only if, $dim(V)=dim(W)$, $\star=0$ and $f=0$.
\end{enumerate}\end{prop}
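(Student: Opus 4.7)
My plan is to treat both parts via the invariant $f_A$ from Proposition~\ref{4} and the induced preLie structure on $Prim(A)$, which is the same strategy used in the earlier classification results for $T(V,f)$ and $coS(V,f,\lambda)$.

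For part~$1$, the $\Longleftarrow$ direction is functoriality: if $F:(V,\star)\longrightarrow (V',\star')$ is a preLie isomorphism, then $T(F)$ is an isomorphism of Zinbiel-PreLie bialgebras, in particular of Com-PreLie bialgebras (this was remarked right after the theorem constructing $T(V,\star)$). For the $\Longrightarrow$ direction, let $\phi:T(V,\star)\longrightarrow T(V',\star')$ be an isomorphism of Com-PreLie bialgebras. Then $\phi(1)=1$ and $\phi$ restricts to a linear isomorphism $F:Prim(T(V))\longrightarrow Prim(T(V'))$, that is $F:V\longrightarrow V'$. By the remark recorded just before the proposition, $f_{T(V,\star)}=0=f_{T(V',\star')}$, and the preLie product induced on the primitives is $\star$, respectively $\star'$. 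Since Proposition~\ref{4} tells us that when $f_A=0$ the restriction of $\bullet$ to $Prim(A)$ is the induced preLie bracket, for any $x,y\in V$:
$$F(x\star y)=\phi(x\bullet y)=\phi(x)\bullet \phi(y)=F(x)\star'F(y),$$
so $F$ is a preLie isomorphism.

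For part~$2$, the $\Longleftarrow$ direction is straightforward: if $\star=0$, then by the explicit formula defining $T(V,\star)$, the preLie product on $T(V,\star)$ vanishes identically; similarly, if $g=0$ then the inductive definition of $\bullet$ in $T(W,g)$ gives $\bullet=0$. Hence any Hopf algebra isomorphism $T(F):T(V)\longrightarrow T(W)$ induced by a linear bijection $F:V\longrightarrow W$ (which exists since $\dim(V)=\dim(W)$) is automatically a Com-PreLie bialgebra isomorphism.

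For the $\Longrightarrow$ direction, suppose $\phi:T(V,\star)\longrightarrow T(W,g)$ is an isomorphism. Restricting $\phi$ to primitives gives a linear isomorphism $F:V\longrightarrow W$, so $\dim(V)=\dim(W)$. We have $f_{T(V,\star)}=0$ and $f_{T(W,g)}=g$; since $\phi$ commutes with the map $a\longmapsto a\bullet 1$ and $\phi(1)=1$, for every $x\in V$:
$$0=\phi(x\bullet 1)=\phi(x)\bullet 1=g(F(x)).$$
As $F$ is surjective, $g=0$. Now that $g=0$, the preLie product on $Prim(T(W,g))=W$ vanishes, so applying Proposition~\ref{4} as in part~$1$ we get that $F$ is a preLie isomorphism from $(V,\star)$ to $(W,0)$, forcing $\star=0$. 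No step is genuinely hard; the only thing to be careful about is to invoke Proposition~\ref{4} cleanly in both directions so that $f_A$ and the induced preLie bracket on $Prim(A)$ do all the work.
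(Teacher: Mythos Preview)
Your proof is correct and follows essentially the same approach as the paper: both directions in part~1 use functoriality and the restriction to primitives (where $\bullet$ coincides with $\star$), and in part~2 both arguments first force $g=0$ via $\phi(x\bullet 1)=0$ and then conclude $\star=0$ from the vanishing of the preLie product on the target. The only cosmetic difference is that the paper phrases the last step as ``the entire preLie product of $T(W,g)$ is zero, hence by isomorphism so is that of $T(V,\star)$'', while you restrict attention to primitives; both are equivalent here.
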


\begin{proof} 1. If $F:V\longrightarrow V'$ is a preLie algebra isomorphism, by functoriality, $T(F)$ is an isomorphism from
$T(V,\star)$ to $T(V',\star')$. Let us assume that $\phi:T(V,\star)\longrightarrow T(V',\star')$ is an isomorphism.
It induces by restriction an isomorphism $F$ from $Prim(T(V))=V$ to $Prim(T(V'))=V'$. Moreover, for all $x,y\in V$:
$$\phi(x\bullet y)=\phi(x\star y)=F(x\star y)=\phi(x)\bullet \phi(y)=F(x)\bullet F(y)=F(x)\star'F(y).$$
So $(V,\star)$ and $(V',\star')$ are isomorphic. \\

2. If $dim(V)=dim(W)$, $\star=0$ and $f=0$, then both preLie product of $T(V,\star)$ and $T(W,g)$ are zero.
Let $F:V\longrightarrow W$ be an isomorphism. Then $T(F)$ is an isomorphism from $T(V,\star)$ to $T(W,g)$.
Conversely, if $\phi:T(V,\star)\rightarrow T(W,g)$ is an isomorphism, it induces an isomorphism $F$ from $Prim(T(V))=V$ to $Prim(T(W))=W$.
As $\phi(1)=1$, for all $x\in V$:
$$\phi(x\bullet 1)=0=\phi(x)\bullet \phi(1)=F(x)\bullet 1=g\circ F(x).$$
As $F$ is an isomorphism, $g=0$, so the preLie product of $T(W,g)$ is zero. By isomorphism, the preLie product $\star$ of $T(V,\star)$ is zero. \end{proof}

\section{Examples on $\K[X]$}

Our aim in this section is to give all preLie products on $\K[X]$, making it a graded Com-PreLie algebra.
We shall prove the following result:

\begin{theo}\label{18}
\begin{enumerate}
\item The following objects are Zinbiel-PreLie algebras:
\begin{enumerate}
\item Let $N\geq 1$, $\lambda,a,b \in \K$, $a\neq 0$, $b\notin \mathbb{Z}_-$. We put $\g^{(1)}(N,\lambda,a,b)=(\K[X],m,\bullet)$, with:
$$X^i \bullet X^j=\begin{cases}
i\lambda X^i \mbox{ if }j=0,\\
a\frac{i}{\frac{j}{N}+b} X^{i+j}\mbox{ if }j\neq 0 \mbox{ and }N\mid j,\\
0\mbox{ otherwise}.
\end{cases}$$ 
\item Let $N\geq 1$, $\lambda,\mu \in \K$, $\mu \neq 0$. We put $\g^{(2)}(N,\lambda,\mu)=(\K[X],m,\bullet)$, with:
$$X^i \bullet X^j=\begin{cases}
i\lambda X^i \mbox{ if }j=0,\\
i\mu X^{i+N} \mbox{ if }j=N,\\
0\mbox{ otherwise}.
\end{cases}$$
\item Let $N\geq 1$, $\lambda,\mu \in \K$, $\mu \neq 0$. We put $\g^{(3)}(N,\lambda,\mu)=(\K[X],m,\bullet)$, with:
$$X^i \bullet X^j=\begin{cases}
i\lambda X^i \mbox{ if }j=0,\\
i\mu X^{i+j}  \mbox{ if }j\neq 0 \mbox{ and }N\mid j,\\
0\mbox{ otherwise}.
\end{cases}$$
\item Let $\lambda \in \K$. We put $\g^{(4)}(\lambda)=(\K[X],m,\bullet)$, with:
$$X^i \bullet X^j=\begin{cases}
i\lambda X^i \mbox{ if }j=0,\\
0\mbox{ otherwise}.
\end{cases}$$ 
In particular, the preLie product of $\g^{(4)}(0)$ is zero.
\end{enumerate}
\item Moreover, if  $\bullet$ is a product on $\K[X]$, such that $\g=(\K[X],m,\bullet)$ is a graded Com-PreLie algebra, Then $\g$ is one of the 
preceding examples. 
\end{enumerate}\end{theo}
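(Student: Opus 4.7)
The plan is to decouple the theorem into two essentially independent parts: a direct verification of the four families in part 1 and a structural classification in part 2.

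For part 1, I would observe that all four families fit the uniform shape $X^i \bullet X^j = i\alpha_j X^{i+j}$ for a scalar sequence $(\alpha_j)_{j\geq 0}$. Under this shape, the Leibniz compatibility $(X^a \cdot X^b)\bullet X^c = (X^a \bullet X^c)\cdot X^b + X^a \cdot (X^b \bullet X^c)$ collapses to the trivial identity $(a+b)\alpha_c = a\alpha_c + b\alpha_c$; the Zinbiel identity for the half-shuffle $X^m \prec X^n = \frac{m}{m+n} X^{m+n}$ is inherited from $coS(\mathbb{K}X)$; and the Zinbiel-PreLie compatibility between $\prec$ and $\bullet$ similarly reduces to an identity that holds formally. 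What remains to check is the preLie axiom, which after cancelling an overall factor $a$ boils down to the scalar condition
\[
(b-c)\alpha_b \alpha_c = \alpha_{b+c}(b\alpha_c - c\alpha_b),
\]
and this is a direct substitution for each of the four $\alpha$-recipes.

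For part 2, I would first extract the general form of $\bullet$ from the axioms. Grading forces $X^i \bullet X^j = c_{i,j} X^{i+j}$, Lemma~\ref{3} gives $c_{0,j}=0$, and the Leibniz rule applied to $X^i \cdot X^{i'}$ yields the additivity $c_{i+i',j} = c_{i,j} + c_{i',j}$. Combined, these imply $c_{i,j} = i\alpha_j$ with $\alpha_j := c_{1,j}$; the scalar $\alpha_0 = \lambda$ is free and accounts for the $\lambda$-parameter in all four families. Expanding the preLie axiom on $X^a, X^b, X^c$ and dividing by $a$ then yields the same scalar equation displayed above, now as a necessary condition on $(\alpha_j)$.

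The heart of the argument, and the main obstacle, is classifying all solutions of this functional equation. If $\alpha_c = 0$ for every $c\geq 1$, one recovers $\g^{(4)}(\lambda)$. Otherwise set $N = \min\{c\geq 1 : \alpha_c \neq 0\}$. Specializing the equation with $b = N$ and $1 \leq c < N$ gives $0 = -c\alpha_N \alpha_{N+c}$, forcing $\alpha_{N+c} = 0$; iterating this with $b = N$ and $c$ not a multiple of $N$ propagates $\alpha_j = 0$ for all $j \geq 1$ with $N \nmid j$ by induction on $j$. Writing $\gamma_k := \alpha_{kN}$ and specializing to $b = N$, $c = kN$ yields the recurrence
\[
(k-1)\gamma_1\gamma_k = \gamma_{k+1}(k\gamma_1 - \gamma_k), \qquad k \geq 1,
\]
which determines the sequence $(\gamma_k)_{k\geq 3}$ from the free datum $\gamma_2$. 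The ratio $r = \gamma_2/\gamma_1$ then splits the analysis into three sub-cases: $r = 0$ propagates to $\gamma_k = 0$ for all $k\geq 2$ (yielding $\g^{(2)}(N,\lambda,\gamma_1)$); $r = 1$ propagates to the constant sequence $\gamma_k = \gamma_1$ (yielding $\g^{(3)}(N,\lambda,\gamma_1)$); and for $r\notin\{0,1\}$ one identifies the closed form $\gamma_k = a/(k+b)$ with $a = r\gamma_1/(1-r)$ and $b = (2r-1)/(1-r)$ (yielding $\g^{(1)}(N,\lambda,a,b)$). The required constraint $b\notin \mathbb{Z}_-$ is automatic, because otherwise the recurrence would reach the degeneracy $\gamma_k = k\gamma_1$ at some step, in contradiction with $(k-1)\gamma_1\gamma_k = 0$. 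A final check that the closed form $\gamma_k = a/(k+b)$ satisfies the full functional equation (not merely the $b=N$ specialization used to derive the recurrence) closes the classification.
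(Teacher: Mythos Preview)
Your proposal is correct and follows essentially the same route as the paper. The paper isolates the scalar condition $(j\lambda_k-k\lambda_j)\lambda_{j+k}=(j-k)\lambda_j\lambda_k$ as Lemma~\ref{21}, proves the Zinbiel compatibility as a separate Lemma~\ref{22}, and then runs the same case analysis: define $N$, kill $\lambda_j$ for $N\nmid j$ by induction, and split on whether $\lambda_{2N}$ vanishes (your $r=0$), equals $\lambda_N$ (your $r=1$), or neither, recovering the closed form for $\g^{(1)}$ and the constraint $b\notin\mathbb{Z}_-$ from the non-degeneracy of the recursion.
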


{\bf Remark.} If $\lambda=\frac{a}{b}$, in $\g^{(1)}(N,\lambda,a,b)$, for all $i,j \in \N$:
$$X^i \bullet X^j=\begin{cases}
\frac{ai}{\frac{j}{N}+b} \mbox{ if }N\mid j,\\
0\mbox{ otherwise}.
\end{cases}$$
We put $\g^{(1)}(N,a,b)=\g^{(1)}(N,\frac{a}{b},a,b)$.\\

It is possible to prove that all these Com-PreLie algebras are not isomorphic. However, they can be isomorphic as Lie algebras.
Let us first recall some notations on the Faà di Bruno Hopf algebra \cite{FoissyDSE2}:
\begin{itemize}
\item $\g_{FdB}$ has a basis $(e_i)_{i\geq 1}$, and for all $i,j \geq 1$, $[e_i,e_j]=(i-j)e_{i+j}$.
\item Let $\alpha \in \K$. The right $\g_{FdB}$-module has a basis $V_\alpha=Vect(f_i)_{i\geq 1}$, and the right action of $\g_{FdB}$ is defined by
$f_i.e_j=(i+\alpha)e_{i+j}$.
\end{itemize}

\begin{prop}
Let $N\geq 1$, $\lambda,\lambda',\mu,a,b \in \K$, $\mu,a\neq 0$, $b\notin \mathbb{Z}_-$. Then, as Lie algebras:
$$\g^{(1)}(N,\lambda,a,b)_+\approx \g^{(3)}(N,\lambda,\mu)_+\approx \left(V_{-\frac{1}{N}}\oplus\ldots \oplus V_{-\frac{N-1}{N}}\right) \rtimes \g_{FdB}.$$
\end{prop}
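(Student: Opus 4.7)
The proof is by explicit construction of isomorphisms: in each of $\g^{(1)}(N,\lambda,a,b)_+$ and $\g^{(3)}(N,\lambda,\mu)_+$ I will exhibit a rescaling of the basis $(X^j)_{j\geq 1}$ realizing the target semidirect product structure. Decompose $\g_+ = W_0\oplus W_1\oplus\cdots\oplus W_{N-1}$ by residues mod $N$, with $W_0$ spanned by the $X^{Np}$ ($p\geq 1$) and $W_k$ spanned by the $X^{Np+k}$ ($p\geq 0$) for $1\leq k\leq N-1$. Since $X^i\bullet X^j$ vanishes in both cases whenever $j\neq 0$ and $N\nmid j$, the subspace $W_0$ is a Lie subalgebra, each $W_k$ is stable under the adjoint action of $W_0$, and $[W_k, W_{k'}]=0$ for all $k,k'\in\{1,\ldots,N-1\}$. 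This already exhibits $\g_+$ as $(W_1\oplus\cdots\oplus W_{N-1})\rtimes W_0$ with abelian module part; it remains to identify $W_0$ with $\g_{FdB}$ and each $W_k$ with the appropriate $V_\alpha$.

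For $\g^{(3)}(N,\lambda,\mu)_+$, direct computation gives $[X^{Np},X^{Nq}] = N\mu(p-q)X^{N(p+q)}$ and $[X^{Nq+k},X^{Np}] = (Nq+k)\mu X^{N(p+q)+k}$. Setting $e_p := (N\mu)^{-1}X^{Np}$ for $p\geq 1$ and $f^{(k)}_i := X^{N(i-1)+k}$ for $1\leq k\leq N-1$, $i\geq 1$, these formulas become $[e_p,e_q]=(p-q)e_{p+q}$ and $[f^{(k)}_i, e_p] = (i + (k-N)/N)f^{(k)}_{i+p}$, exactly the right action of $\g_{FdB}$ on $V_{(k-N)/N}$. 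As $k$ ranges over $\{1,\ldots,N-1\}$, the index $(k-N)/N$ ranges over $-(N-1)/N, \ldots, -1/N$, giving precisely the modules $V_{-1/N},\ldots,V_{-(N-1)/N}$.

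For $\g^{(1)}(N,\lambda,a,b)_+$, the subalgebra bracket on $W_0$ unfolds as $[X^{Np},X^{Nq}] = aN\bigl(\frac{p}{q+b} - \frac{q}{p+b}\bigr)X^{N(p+q)}$, and the algebraic identity $\frac{p}{q+b} - \frac{q}{p+b} = \frac{(p-q)(p+q+b)}{(p+b)(q+b)}$ dictates the rescaling $e_p := \frac{p+b}{aN}X^{Np}$, under which $[e_p,e_q] = (p-q)e_{p+q}$. For the module part, $[X^{Nq+k},X^{Np}] = \frac{a(Nq+k)}{p+b}X^{N(p+q)+k}$, and keeping $f^{(k)}_i := X^{N(i-1)+k}$ (no further rescaling needed) yields $[f^{(k)}_i, e_p] = (i + (k-N)/N)f^{(k)}_{i+p}$, exactly as in the previous case. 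The only nontrivial step is spotting the factorization identity that makes the $\g_{FdB}$ rescaling work in $\g^{(1)}$; the remaining verifications are routine bilinear checks on basis vectors, and bijectivity is immediate since source and target have bases in canonical correspondence $(p \leftrightarrow X^{Np}, \ (k,i)\leftrightarrow X^{N(i-1)+k})$.
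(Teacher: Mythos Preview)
Your proof is correct and follows essentially the same approach as the paper: the paper also sets $E_i=\frac{i+b}{Na}X^{Ni}$, $F_i^{(r)}=X^{N(i-1)+r}$ for $\g^{(1)}$ and $E_i=\frac{1}{N\mu}X^{Ni}$, $F_i^{(r)}=X^{N(i-1)+r}$ for $\g^{(3)}$, and verifies the same bracket relations. Your exposition is slightly more detailed (you spell out the residue decomposition and the factorization identity explicitly), but the argument is the same.
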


\begin{proof} We first work in $\g^{(1)}(N,\lambda,a,b)$. For all $i\geq 1$, for all $1\leq r\leq N-1$, we put
$E_i=\frac{i+b}{Na} X^{Ni}$ and $F_i^{(r)}=X^{N(i-1)+r}$. Then $\displaystyle (E_i)_{i\geq 1}\cup \bigcup_{r=1}^{N-1} \left(F_i^{(r)}\right)_{i\geq 1}$
is a basis of $\g^{(1)}(N,\lambda,a,b)_+$, and, for all $i,j \geq 1$, for all $1\leq r,s\leq N-1$:
\begin{align*}
[E_i,E_j]&=(i-j)E_{i+j},&[F_i^{(r)},F_j^{(s)}]&=0,& [F_i^{(r)},E_j]&=\left(i+\frac{r-N}{N}\right)F_{i+j}^{(r)}.
\end{align*}
Hence, this Lie algebra is isomorphic to $\left(V_{-\frac{N-1}{N}}\oplus\ldots \oplus V_{-\frac{1}{N}}\right) \rtimes \g_{FdB}$.
The proof is similar for $\g^{(3)}(N,\lambda',\mu)$, with $E_i=\frac{1}{N\mu} X^{Ni}$ and $F_i^{(r)}=X^{N(i-1)+r}$. \end{proof}\\

Consequently, we can describe the group corresponding to these Lie algebras.
\begin{enumerate}
\item $G_{FdB}$ is the group of formal diffeomorphisms of $\K$ tangent to the identity:
$$G_{FdB}=(\{X+a_1X^2+a_2X^3+\ldots\mid a_2,a_2,\ldots \in \K\},\circ).$$
\item For all $\alpha \in \K$, we define a right $G_{FdB}$-module $\mathbb{V}_\alpha$: as a vector space, this is $\K[[X]]_+$.
The action is given by $P.Q=\left(\frac{Q(X)}{X}\right)^\alpha P\circ Q(X)$ for all $P\in \mathbb{V}_\alpha$ and $Q \in G_{FdB}$.
\end{enumerate}
Then the group corresponding to our Lie algebras $\g^{(1)}(N,\lambda,a,b)_+$ and $\g^{(3)}(N,\lambda,\mu)_+$ is:
$$\left(\mathbb{V}_{-\frac{1}{N}}\oplus\ldots \oplus \mathbb{V}_{-\frac{N-1}{N}}\right) \rtimes G_{FdB}.$$

Let us conclude this paragraph with the description of the Lie algebra associated to $\g^{(2)}(N,\lambda,\mu)$.

\begin{prop}
The Lie algebra $\g^{(2)}(N,\lambda,\mu)_+$ admits a decomposition 
$\g^{(2)}(N,\lambda,\mu)_+\approx V^{\oplus N} \rtimes \g_0$, where:
\begin{itemize}
\item $\g_0$ is an abelian, one-dimensional, Lie algebra, generated by an element $z$.
\item $V$ is a right $\g_0$-module, with a basis $(f_i)_{i\geq 0}$, and the right action defined by $f_i.z=f_{i+1}$.
\end{itemize} \end{prop}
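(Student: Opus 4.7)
My plan is to read off the decomposition directly from the formula for $\bullet$. First I would compute the Lie bracket: for $i,j\geq 1$, the rule $X^i\bullet X^j=i\mu X^{i+N}$ if $j=N$ and $0$ otherwise yields, upon setting $z:=X^N$, the identities $[X^i,z]=i\mu X^{i+N}$ for $i\neq N$, $[z,z]=0$, and $[X^i,X^j]=0$ whenever neither $i$ nor $j$ equals $N$.

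Next I would let $\g_0:=\K z$ and let $W$ be the span of the remaining monomials $X^i$ ($i\geq 1$, $i\neq N$). Then $\g_0$ is a one-dimensional abelian Lie subalgebra, $W$ is an abelian subalgebra since all brackets within $W$ vanish, and $[W,z]\subseteq W$ because $i+N\neq N$ whenever $i\geq 1$. Hence $\g^{(2)}(N,\lambda,\mu)_+\cong W\rtimes\g_0$.

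It then remains to identify $W$ with $V^{\oplus N}$ as a right $\g_0$-module. For this I would decompose $W$ into $N$ orbits of the shift by $N$: for each $k\in\{1,\ldots,N-1\}$ let $W_k$ be the span of $(X^{k+jN})_{j\geq 0}$, and let $W_N$ be the span of $(X^{jN})_{j\geq 2}$ (shifted by one, since $X^N$ has been absorbed into $\g_0$). Each $W_k$ is $z$-stable and $W=\bigoplus_{k=1}^N W_k$. Within each $W_k$ I would take $f_0^{(k)}$ to be the lowest-degree monomial and inductively set $f_{i+1}^{(k)}:=f_i^{(k)}.z$; since the scalar factors produced at each step ($\mu(k+jN)$ for $k<N$, and $\mu N(j+2)$ for $k=N$) are all nonzero, the family $(f_i^{(k)})_{i\geq 0}$ is a basis of $W_k$ realising the right $\g_0$-module $V$. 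Summing over $k$ yields $W\cong V^{\oplus N}$, and hence the desired semidirect product decomposition.

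The only genuine care needed is the indexing shift for $k=N$: because $X^N$ itself has been removed from $W$ and placed in $\g_0$, the $N$-th orbit starts at $X^{2N}$ rather than $X^N$. Once this minor bookkeeping is accounted for, every step is a direct verification of the bracket formulas.
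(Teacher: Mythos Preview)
Your proposal is correct and follows essentially the same route as the paper: compute the Lie bracket directly from the preLie product, take $\g_0=\K X^N$, observe that all other brackets vanish so the complement $W$ is abelian and stable under bracketing with $X^N$, and then split $W$ into the $N$ residue classes modulo $N$ (with the $N$-th class starting at $X^{2N}$), each of which becomes a copy of $V$ by rescaling along the $z$-orbit. The paper writes out the explicit basis vectors $\mu^i\prod_{j}(r+jN)X^{r+iN}$ and $\mu^iN^i(i+1)!X^{(i+2)N}$, which are exactly the elements your recursion $f_{i+1}^{(k)}:=f_i^{(k)}.z$ produces.
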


\begin{proof} The Lie bracket of $\g^{(2)}(N,\lambda,\mu)_+$ is given by:
$$[X^i,X^j]=\begin{cases}
0\mbox{ if }i,j \neq N,
\mu i X^{i+N} \mbox{ if } i\neq N, j=N.
\end{cases}$$
We put $\g_0=Vect(X^N)$. The $N$-copies of $V$ are given by:
\begin{itemize}
\item For $1\leq r<N$, $\displaystyle V^{(r)}=Vect\left(\mu^i \prod_{j=1}^{i-1}(r+jN) X^{r+iN}\mid i\geq 0\right)$.
\item $V^{(N)}=Vect\left(\mu^iN^i(i+1)!X^{(i+2)N}\mid i\geq 0\right)$.
\end{itemize} \end{proof}

\subsection{Graded preLie products on $\K[X]$}

We now look for all  preLie products on $\K[X]$, making it a graded Com-PreLie algebra.
Let $\bullet$  be a such a product. By homogeneity, for all $i,j\geq 0$, there exists a scalar $\lambda_{i,j}$ such that:
$$X^i\bullet X^j=\lambda_{i,j}X^{i+j}.$$
Moreover, for all $i,j,k \geq 0$:
\begin{align*}
X^{i+j}\bullet X^k&=\lambda_{i+j,k}X^{i+j+k}\\
&=(X^iX^j)\bullet X^k\\
&=(X^i \bullet X^k)X^j+X^i(X^j \bullet X^k)\\
&=(\lambda_{i,k}+\lambda_{j,k})X^{i+j+k}.
\end{align*}
Hence, $\lambda_{i+j,k}=\lambda_{i,k}+\lambda_{j,k}$. Putting $\lambda_k=\lambda_{1,k}$ for all $k\geq 0$, we obtain:
$$X^i\bullet X^j=i\lambda_j X^{i+j}.$$

\begin{lemma} \label{21}
For all $k\geq 0$, let $\lambda_k \in \K$. We define a product $\bullet$ on $\K[X]$ by:
$$X^i\bullet X^j=i\lambda_j X^{i+j}.$$
Then $(\K[X],m,\bullet)$ is Com-PreLie if, and only if, for all $j,k\geq 1$:
$$(j\lambda_k-k\lambda_j)\lambda_{j+k}=(j-k)\lambda_j\lambda_k.$$
\end{lemma}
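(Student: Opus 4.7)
The plan is to plug monomials into both axioms of a Com-PreLie structure and read off the condition, since everything is bilinear.

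First, I would verify that the compatibility axiom (c) of Definition \ref{1} is automatic for any choice of the $\lambda_j$. Indeed, for the ansatz $X^i\bullet X^j = i\lambda_j X^{i+j}$ one has
$$(X^iX^j)\bullet X^k = (i+j)\lambda_k X^{i+j+k} = i\lambda_k X^{i+j+k} + j\lambda_k X^{i+j+k} = (X^i\bullet X^k)X^j + X^i(X^j\bullet X^k),$$
which is exactly the identity already used in the paragraph preceding the lemma to derive the form $X^i\bullet X^j = i\lambda_j X^{i+j}$. So only the preLie identity remains to be analyzed.

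Second, I would compute both sides of the preLie relation on a triple $(X^i, X^j, X^k)$. Direct expansion gives
\begin{align*}
(X^i\bullet X^j)\bullet X^k &= i(i+j)\lambda_j\lambda_k X^{i+j+k},\\
X^i\bullet (X^j\bullet X^k) &= ij\lambda_k\lambda_{j+k}X^{i+j+k},
\end{align*}
so the associator $(X^i\bullet X^j)\bullet X^k - X^i\bullet (X^j\bullet X^k)$ equals $i\bigl[(i+j)\lambda_j\lambda_k - j\lambda_k\lambda_{j+k}\bigr]X^{i+j+k}$. The preLie axiom requires this to be symmetric in $j$ and $k$. Subtracting the $j\leftrightarrow k$ swap, the $i\cdot i\lambda_j\lambda_k$ and $i\cdot i\lambda_k\lambda_j$ terms cancel and we are left with
$$i\bigl[(j-k)\lambda_j\lambda_k - (j\lambda_k - k\lambda_j)\lambda_{j+k}\bigr] X^{i+j+k} = 0.$$

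Third, I would dispose of the degenerate cases. For $i=0$ the expression is identically zero, so the preLie axiom gives no constraint. For $i\geq 1$ we may divide by $i$ and obtain $(j\lambda_k - k\lambda_j)\lambda_{j+k} = (j-k)\lambda_j\lambda_k$. If $j=0$ or $k=0$ this reduces to a tautology (both sides equal $-k\lambda_0\lambda_k$ or $j\lambda_j\lambda_0$ respectively), so the nontrivial content appears precisely for $j,k\geq 1$, which is the stated identity.

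I do not expect any real obstacle: the computation is a one-line manipulation on monomials, and the only thing worth taking care of is checking that the $i=0$ and $j=0,k=0$ corner cases impose no additional constraint, which is why the quantifier in the lemma is $j,k\geq 1$.
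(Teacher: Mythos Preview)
Your proposal is correct and follows essentially the same route as the paper: compute the associator on monomials, impose symmetry in $j$ and $k$, and observe that the cases $i=0$ and $j=0$ or $k=0$ are vacuous. The only cosmetic difference is that you also spell out the verification of the derivation axiom (c), which the paper takes for granted from the discussion preceding the lemma.
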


\begin{proof} Let $i,j,k\geq 0$. Then:
$$X^i\bullet(X^j \bullet X^k)-(X^i\bullet X^j)\bullet X^k=(ij \lambda_k \lambda_{j+k}-i(i+j)\lambda_j\lambda_k)X^{i+j+k}.$$
Hence:
\begin{align*}
\bullet\mbox{ is preLie}&\Longleftrightarrow
\forall i,j,k\geq 0, ij \lambda_k \lambda_{j+k}-i(i+j)\lambda_j\lambda_k=ik \lambda_j \lambda_{j+k}-i(i+k)\lambda_j\lambda_k\\
&\Longleftrightarrow \forall j,k\geq 0, (j\lambda_k-k\lambda_j)\lambda_{j+k}=(j-k)\lambda_j\lambda_k\\
&\Longleftrightarrow \forall j,k\geq 1, (j\lambda_k-k\lambda_j)\lambda_{j+k}=(j-k)\lambda_j\lambda_k,
\end{align*} 
as this relation is trivially satisfied if $j=0$ or $k=0$. \end{proof}

\begin{lemma}\label{22}
Let $\bullet$ be a product on $\K[X]$, making it a graded Com-PreLie algebra. Then $(\K[X],\prec,\bullet)$ is a Zinbiel-PreLie algebra.
\end{lemma}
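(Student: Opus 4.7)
The plan is to define on $\K[X]$ the half-shuffle
$$X^i \prec X^j = \frac{i}{i+j}\,X^{i+j}\quad\text{for } i,j\ge 1,$$
with the conventions $a\prec 1 = a$ and $1\prec a = 0$ for $a\in\K[X]_+$. Summing with the swapped version gives $X^i\prec X^j + X^j\prec X^i = X^{i+j}$, so the associated commutative product is precisely the polynomial product $m$. The Zinbiel axiom for $\prec$ can either be checked directly on monomials by a short binomial computation, or obtained for free by transporting the half-shuffle of $T(\K X)$ via the Hopf algebra isomorphism $(\K[X],m,\bfDelta)\cong(coS(\K X),\shuffle,\Delta)$ of paragraph~2.2 (in the one-dimensional case $V=\K X$).

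With $\prec$ fixed, the only remaining point is the Zinbiel-PreLie compatibility
$$(a\prec b)\bullet c = (a\bullet c)\prec b + a\prec(b\bullet c).$$
By bilinearity it suffices to test this on monomials $a=X^i$, $b=X^j$, $c=X^k$. The cases where any of $i,j,k$ is zero reduce to $1\bullet c=0$ (Lemma~\ref{3}) together with the unit conventions on $\prec$, and are immediate. For $i,j,k\ge 1$, using the graded form $X^a\bullet X^b = a\lambda_b X^{a+b}$ extracted at the start of section~3.1, the left-hand side equals
$$\frac{i}{i+j}\,(i+j)\lambda_k\,X^{i+j+k} = i\lambda_k\,X^{i+j+k},$$
whereas the right-hand side equals
$$\lambda_k\!\left(\frac{i(i+k)}{i+j+k}+\frac{ij}{i+j+k}\right)X^{i+j+k} = \frac{i(i+j+k)\lambda_k}{i+j+k}\,X^{i+j+k} = i\lambda_k\,X^{i+j+k}.$$
Both sides coincide, which finishes the verification.

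I expect no genuine obstacle: once the correct $\prec$ is pinned down, the whole argument collapses to the elementary identity $i(i+k)+ij = i(i+j+k)$. It is worth noting that the preLie constraint on the $\lambda_j$'s established in Lemma~\ref{21} is not invoked here; only the graded form $X^i\bullet X^j = i\lambda_j X^{i+j}$ and the definition of $\prec$ enter the computation.
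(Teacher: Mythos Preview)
Your argument is correct and mirrors the paper's proof: both reduce to monomials, invoke the graded form $X^i\bullet X^j=i\lambda_j X^{i+j}$, and verify the Zinbiel--PreLie compatibility via the same elementary identity $i(i+k)+ij=i(i+j+k)$. One small correction: the edge case $k=0$ (i.e.\ $c=1$) does not follow from $1\bullet c=0$ as you claim, since here $a\bullet 1=i\lambda_0 X^i$ need not vanish; rather, your main computation already covers $k\ge 0$ verbatim once $i,j\ge 1$, which is exactly how the paper proceeds (it only excludes $(i,j)=(0,0)$).
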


\begin{proof} Let us take $i,k,k\geq 0$, $(i,j)\neq(0,0)$. Then:
\begin{align*}
(X^i\bullet X^k)\prec X^j+X^i \prec (X^j \bullet X^k)&=\lambda_k(i X^{i+k}\prec X^j+j X^i \prec X^{j+k})\\
&=\lambda_k\left(\frac{i(i+k)}{i+j+k}+\frac{ij}{i+j+k}\right)X^{i+j+k}\\
&=i\lambda_kX^{i+j+k}\\
&=(i+j)\lambda_k \frac{i}{i+j} X^{i+j+k},\\
(X^i\prec X^j)\bullet X^k&=\frac{i}{i+j}X^{i+j}\bullet X^k\\
&=\frac{i}{i+j}(i+j)\lambda_k X^{i+j+k}\\
&=i\lambda_k X^{i+j+k}.
\end{align*}
So $\K[X]$ is Zinbiel-PreLie. \end{proof}\\

\begin{proof} (Theorem \ref{18}, first part). Let us first prove that the objects defined in theorem \ref{18} are indeed Zinbiel-PreLie algebras.
By lemma \ref{22}, it is enough to prove that they are Com-PreLie algebras. We shall use lemma \ref{21} in all cases. \\

1. For all $j\geq 1$, $\lambda_j=a\frac{1}{\frac{j}{N}+b}$ if $N\mid j$ and $0$ otherwise. If $j$ or $k$ is not a multiple of $N$, then:
$$(j\lambda_k-k\lambda_j)\lambda_{j+k}=(j-k)\lambda_j\lambda_k=0.$$
If $j=Nj'$ and $k=Nk'$, with $j',k'$ integers, then:
\begin{align*}
(j\lambda_k-k\lambda_j)\lambda_{j+k}&=Na^2\left(\frac{j'}{k'+b}-\frac{k'}{j'+b}\right)\frac{1}{j'+k'+b}\\
&=Na^2\frac{j'^2-k'^2+b(j'-k')}{(j'+b)(k'+b)(j'+k'+b)}\\
&=Na^2(j'-k')\frac{j'+k'+b}{(j'+b)(k'+b)(j'+k'+b)}\\
&a^2(j-k)\frac{1}{(j'+b)(k'+b)}\\
&=(j-k)\lambda_j\lambda_k.
\end{align*}

2. In this case, $\lambda_j=\mu$ if $j=N$ and $0$ otherwise. Hence, for all $j,k \geq 1$:
\begin{align*}
(j\lambda_k-k\lambda_j)\lambda_{j+k}&=\mu^2(j\delta_{k,N}-k\delta_{j,N})\delta_{j+k,N}=0,\\
(j-k)\lambda_j\lambda_k&=\mu^2(j-k) \delta_{j,N} \delta_{k,N}=0.
\end{align*}

3. Here, for all $j\geq 1$, $\lambda_j=\mu$ if $N\mid j$ and $0$ otherwise. Then:
\begin{align*}
(j\lambda_k-k\lambda_j)\lambda_{j+k}&=\begin{cases}
\mu^2(j-k) \mbox{ if }N\mid j,k,\\
0\mbox{ otherwise};
\end{cases}&
(j-k)\lambda_j\lambda_k&=\begin{cases}
\mu^2(j-k) \mbox{ if }N\mid j,k,\\
0\mbox{ otherwise}.
\end{cases}\end{align*}

4. In this case, for all $j\geq 1$, $\lambda_j=0$ and the result is trivial. \end{proof}

\subsection{Classification of graded preLie products on $\K[X]$}

We now prove that the preceding examples cover all the possible cases.\\

\begin{proof} (Theorem \ref{18}, second part).  We put $X^i \bullet X^j=i\lambda_j X^{i+j}$ for all $i,j \geq 0$, and we put $\lambda=\lambda_0$. 
If for all $j\geq 1$, $\lambda_j=0$, then $\g=\g^{(4)}(\lambda)$. If this is not the case, we put:
$$N=\min\{j\geq 1\mid \lambda_j \neq 0\}.$$

{\it First step.} Let us prove that if $i$ is not a multiple of $N$, then $\lambda_i=0$.We put $i=qN+r$, with $0<r<N$, 
and we proceed by induction on $q$. By definition of $N$, $\lambda_1=\ldots=\lambda_{N-1}=0$, which is the result for $q=0$. 
Let us assume the result at rank $q-1$, with $q>0$.  We put $j=i-N$ and $k=N$. By the induction hypothesis, $\lambda_j=0$. Then, by lemma \ref{21}:
$$(i-N) \lambda_N\lambda_i=0.$$
As $i\neq N$ and $\lambda_N \neq 0$, $\lambda_i=0$. It is now enough to determine $\lambda_{iN}$ for all $i\geq 1$. \\

{\it Second step.} Let us assume that $\lambda_{2N}=0$. Let us prove that $\lambda_{iN}=0$ for all $i\geq 2$, by induction on $i$.
This is obvious if $i=2$. Let us assume the result at rank $i-1$, with $i\geq 3$, and let us prove it at rank $i$. 
We put $j=(i-1)N$ and $k=N$. By the induction hypothesis, $\lambda_j=0$. Then, by lemma \ref{21}:
$$(i-2)N\lambda_N \lambda_iN=0.$$
As $i\geq 3$ and $\lambda_N \neq 0$, $\lambda_{iN}=0$. As a conclusion, if $\lambda_{2N}=0$, putting $\mu=\lambda_N$, $\g=\g^{(2)}(N,\lambda,\mu)$. \\

{\it Third step.} We now assume that $\lambda_{2N} \neq 0$. We first prove that $\lambda_{iN}\neq 0$ for all $i\geq 1$.
This is obvious if $i=1,2$. he result at rank $i-1$, with $i\geq 3$, and let us prove it at rank $i$. 
We put $j=(i-1)N$ and $k=N$.  Then, by lemma \ref{21}:
$$(j\lambda_N-N\lambda_j)\lambda_{iN}=(i-2)N\lambda_j\lambda_N.$$
By the induction hypothesis, $\lambda_j\neq 0$. Moreover, $i>2$ and $\lambda_N\neq 0$, so $\lambda_{iN} \neq 0$.\\

For all $j\geq 1$, we put $\displaystyle \mu_j=\frac{\lambda_{kN}}{\lambda_N}$: this is a nonzero scalar, and $\mu_1=1$. 
Let us prove inductively that:
\begin{align*}
\mu_k&=\frac{\mu_2}{(k-1)-(k-2)\mu_2},&\mu_2&\neq \frac{k-1}{k-2} \mbox{ if }k\neq 2.
\end{align*}
If $k=1$, $\mu_1=1=\frac{\mu_2}{0-(-1)\mu_2}$, and $\mu_2\neq 0$ as $\lambda_{2N}\neq 0$; 
if $k=2$, $\mu_2=\frac{\mu_2}{1-0\mu_2}$. Let us assume the result at rank $k-1$, with $k\geq 3$. 
By lemma \ref{21}, with $j=(k-1)N$ and $k=N$:
\begin{align*}
((k-1)N \lambda_N-\lambda_N\mu_{k-1})\lambda_N\mu_k&=(k-2)N \mu_{k-1}\mu_1\lambda_N^2,\\
\mu_k(k-1-\mu_{k-1})&=(k-2)\mu_{k-1}.
\end{align*}
As $\mu_{k-1}\neq 0$ and $k>2$, $k-1-\mu_{k-1}\neq 0$. Moreover, by the induction hypothesis:
\begin{align*}
k-1-\mu_{k-1}&=k-1-\frac{\mu_2}{(k-2)-(k-3)\mu_2}\\
&=\frac{(k-1)(k-2)-((k-1)(k-3)+1)\mu_2}{(k-2)-(k-3)\mu_2}\\
&=(k-2)\frac{(k-1)-(k-2)\mu_2}{(k-2)-(k-3)\mu_2}.
\end{align*}
As this is nonzero, $\mu_2\neq \frac{k-1}{k-2}$. We finally obtain:
$$\mu_k=(k-2)\mu_{k-1}\frac{1}{k-2}\frac{(k-2)-(k-3)\mu_2}{(k-1)-(k-2)\mu_2}=\frac{\mu_2}{(k-1)-(k-2)\mu_2}.$$
Finally, for all $k\geq 1$:
$$\lambda_{kN}=\frac{\lambda_N\mu_2}{(k-1)-(k-2)\mu_2}=\frac{\lambda_N\mu_2}{(1-\mu_2)k+2\mu_2-1}.$$

{\it Last step.} If $\mu_2=1$, then for all $k\geq 1$, $\lambda_{kN}=\lambda_N$: this is $\g^{(3)}(N,\lambda,\lambda_N)$. If $\mu_2\neq 1$,
we put  $b=\frac{2\mu_2-1}{1-\mu_2}$. 
\begin{itemize}
\item As $\mu_2\neq 0$, $b\neq -1$;
\item $b\neq -2$;
\item for all $k\geq 3$, $\mu_2\neq \frac{k-1}{k-2}$, so $b\neq -k$. 
\end{itemize}
This gives that $b\notin \mathbb{Z}_-$. Moreover, for all $k\geq 1$:
$$\lambda_{kN}=\frac{\frac{\lambda_N\mu_2}{1-\mu_2}}{k+b}.$$
We take $a=\frac{\lambda_N\mu_2}{1-\mu_2}$, and we obtain $\g^{(1)}(N,\lambda,a,b)$. \end{proof}

\begin{prop}\label{23}
Among the examples of theorem \ref{18}, the Com-PreLie bialgebras (or equivalently the Zinbiel-PreLie bialgebras) are 
$\g^{(1)}(1,a,1)$ for all $a\neq 0$ and $g^{(4)}(0)$. 
\end{prop}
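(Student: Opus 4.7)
The plan is to use the coproduct compatibility
\[
\Delta(a \bullet b) = a^{(1)} \otimes (a^{(2)} \bullet b) + (a^{(1)} \bullet b^{(1)}) \otimes a^{(2)} b^{(2)}
\]
to extract a strong necessary condition on the scalars $\lambda_j$ (defined by $X^i \bullet X^j = i\lambda_j X^{i+j}$) and then to check which of the four families of Theorem \ref{18} realize it.

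The most informative instance is $a = X$ (primitive) with $b = X^j$. Expanding $\Delta(X^{j+1}) = \sum_m \binom{j+1}{m} X^m \otimes X^{j+1-m}$ and $\Delta(X^j) = \sum_l \binom{j}{l} X^l \otimes X^{j-l}$, using $1 \bullet X^l = 0$, and reading off the coefficient of $X^m \otimes X^{j+1-m}$ for $m \geq 1$, the compatibility becomes
\[
\lambda_j \binom{j+1}{m} = \binom{j}{m-1}\lambda_{m-1}.
\]
Setting $m = 1$ yields the key recursion $\lambda_j = \lambda_0/(j+1)$ for every $j \geq 1$, and a direct Pascal-rule check confirms this single identity already implies the whole family; in particular, the recursion is both necessary and sufficient for the case $a = X$.

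Next I would go through the four families. For $\g^{(4)}(\lambda)$, $\lambda_j = 0$ for $j \geq 1$ forces $\lambda_0 = 0$, giving $\g^{(4)}(0)$. For $\g^{(2)}(N,\lambda,\mu)$ with $\mu \neq 0$: if $N \geq 2$ then $\lambda_1 = 0$ while $\lambda_N = \mu$, contradicting the uniform formula; if $N = 1$ then $\lambda_1 = \mu$ while $\lambda_2 = 0$, again impossible. The family $\g^{(3)}(N,\lambda,\mu)$ is ruled out analogously: for $N \geq 2$ by $\lambda_1 = 0 \neq \lambda_N$, and for $N = 1$ by the fact that the constant sequence $\lambda_j = \mu$ cannot match the strictly decreasing $\lambda_0/(j+1)$. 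Finally for $\g^{(1)}(N,\lambda,a,b)$ with $a \neq 0$: the vanishing of $\lambda_j$ on non-multiples of $N$ forces $N = 1$, and then solving $a/(j+b) = \lambda_0/(j+1)$ for every $j \geq 1$ forces $b = 1$ and $\lambda = \lambda_0 = a$, giving $\g^{(1)}(1,a,1)$.

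It remains to verify sufficiency for the two candidates. In $\g^{(4)}(0)$, $\bullet = 0$ and the compatibility is trivial. In $\g^{(1)}(1,a,1)$ the recursion holds by construction, so the compatibility is already established for $a = X$; I would then extend to $a = X^i$ by induction on $i$, using the Com-PreLie Leibniz rule $(X \cdot X^i) \bullet b = (X \bullet b) \cdot X^i + X \cdot (X^i \bullet b)$ together with the fact that $\Delta$ is an algebra morphism for the product of $\K[X]$. The expected obstacle is the Sweedler-summand bookkeeping in this step: one expands $\Delta(X \bullet b) \cdot \Delta(X^i) + \Delta(X) \cdot \Delta(X^i \bullet b)$ and reassembles the terms as the required expression for $\Delta(X^{i+1} \bullet b)$; the manipulation is routine but somewhat tedious.
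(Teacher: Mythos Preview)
Your proof is correct and follows broadly the same strategy as the paper for the necessity direction: both compute $\Delta(X\bullet X^j)$ and compare tensor components to constrain the scalars $\lambda_j$. Your extraction of the uniform identity $\lambda_j\binom{j+1}{m}=\binom{j}{m-1}\lambda_{m-1}$ and its consequence $\lambda_j=\lambda_0/(j+1)$ is cleaner than the paper's case-by-case computations at $j=1$, $j=N$, $j=2$, but the content is the same.

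The sufficiency arguments differ. The paper does not verify the coproduct compatibility for $\g^{(1)}(1,a,1)$ directly; instead it exhibits an explicit Hopf algebra isomorphism $\Theta:\K[X]\to T(V)$, $X\mapsto x$ (so $\Theta(X^n)=n!\,x^n$), where $V$ is one-dimensional with $f=a\,\mathrm{Id}$, and checks that $\Theta$ intertwines $\bullet$ and $\prec$ with the corresponding operations of $T(V,f)$. Since $T(V,f)$ is already known to be a Zinbiel-PreLie bialgebra, this settles sufficiency at once. Your direct route---verifying the compatibility for $a=X$ and then propagating it to $a=X^i$ via the Leibniz rule and multiplicativity of $\Delta$---also works: the identity
\[
\Delta\bigl((a_1a_2)\bullet b\bigr)=\Delta(a_1\bullet b)\,\Delta(a_2)+\Delta(a_1)\,\Delta(a_2\bullet b)
\]
rearranges exactly into the required form using $(a_1a_2)\bullet c=(a_1\bullet c)a_2+a_1(a_2\bullet c)$ on both tensor legs, so the ``tedious bookkeeping'' you anticipate is in fact a short formal computation. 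The paper's approach has the bonus of identifying $\g^{(1)}(1,a,1)$ with a known object; yours is self-contained.
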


\begin{proof}  Note that $\g^{(1)}(1,0,1)=\g^{(4)}(0)$. 
Let us first prove that $\g(1,a,1)$ is a Zinbiel-PreLie bialgebra for all $a\in \K$.
Let us take $V$ one-dimensional, generated by $x$, with $f=a Id$. We work in $T(V,f)$.
Let us prove that $x^k \bullet x^l=a\binom{k+l}{k-1}x^{k+l}$ by induction on $k$. It is obvious if $k=0$, as $\binom{k+l}{-1}=0$.
Let us assume the result at rank $k-1$.
\begin{align*}
x^k \bullet x^l&=x(x^{k-1}\shuffle x^l)+f(x)x^{k-1}\shuffle x^l\\
&=a\left(\binom{k+l-1}{k-1}+\binom{k+l-1}{k-1}\right)x^{k+l}\\
&=a\binom{k+l}{k-1}x^{k+l}.
\end{align*}
The Zinbiel product of $T(V)$ is given by:
$$x^k \prec x^l=\binom{k+l-1}{k-1} x^{k+l},$$
for all $k,l\geq 1$. There is an isomorphism of Hopf algebras:
$$\Theta:\left\{\begin{array}{rcl}
\K[X]&\longrightarrow&T(V)\\
X&\longrightarrow&x.
\end{array}\right.$$
For all $n\geq 0$, $\Theta(X^n)=x^{\shuffle n}=n!x^n$. For all $k,l\geq 0$:
\begin{align*}
\Theta(X^k)\bullet \Theta(X^l)&=a \binom{k+l}{k-1} k!l! x^{k+l}&\Theta(X^k)\prec \Theta(X^l)&=\binom{k+l-1}{k-1}k!l!x^{k+l}\\
&=\frac{a k}{l+1}\Theta(X^{k+l});&&=\frac{k}{k+l}\Theta(X^{k+l}).
\end{align*}
Consequently, $\g^{(1)}(1,a,1)$  is isomorphic, as a Zinbiel-PreLie bialgebra to $T(V,f)$ (so is indeed a Zinbiel-PreLie bialgebra).\\

Let $\g$ be one of the examples of theorem \ref{18}.  First:
\begin{align*}
\Delta(X\bullet X)&=X\otimes 1\bullet X+1\otimes X\bullet X\\
&+X\bullet X\otimes 1+X\bullet 1\otimes X+1\bullet X\otimes X+1\bullet 1\otimes X^2\\
\lambda_1(1\otimes X^2+2X\otimes X+X^2\otimes 1)&=\lambda_11\otimes X^2+\lambda X \otimes X+\lambda_1 X^2\otimes 1.
\end{align*}
This gives $\lambda=2\lambda_1$. In particular, if $\g=\g^{(4)}(\lambda)$, then $\lambda=2\lambda_1=0$: this is $\g^{(4)}(0)$.
In the other cases, $N$ exists. By definition of $N$, $X\bullet X^k=0$ if $1\leq k \leq N-1$. We obtain:
\begin{align*}
\Delta(X\bullet X^N)&=1\otimes X\bullet X^N+X \otimes 1\bullet X^N
+\sum_{k=0}^N \binom{N}{k} (X\bullet X^k \otimes X^{N-k}+1\bullet X^k \otimes X^{n-k+1})\\
\lambda_N \Delta(X^{N+1})&=1\otimes X\bullet X^N+\lambda X \otimes X^N+1\otimes X\bullet X^N.
\end{align*}
If $\lambda=0$, we obtain that $X^{N+1}$ is primitive, so $N+1=1$: absurd, $N\geq 1$.
So $\lambda \neq 0$. The cocommutativity of $\Delta$ implies that $N=1$.
\begin{align*}
\Delta(X\bullet X^2)&=\lambda_2(X^3\otimes 1+3X^2\otimes X+3X\otimes X^2+1\otimes X^3)\\
&=1\otimes X\bullet X^2+2\lambda_1 X^2\otimes X+\lambda_0X\otimes X^2+1\otimes X\bullet X^2 
\end{align*}
Hence, $3\lambda_2=2\lambda_1$. 
\begin{itemize}
\item If $\g=\g^{(3)}(1,\lambda,\mu)$, we obtain $3\mu=2\mu$, so $\mu=0$: this is a contradiction.
\item If $\g=\g^{(2)}(1,\lambda,\mu)$, we obtain $0=2\mu$, so $\mu=0$: this is a contradiction.
\end{itemize}
So $\g=\g^{(1)}(1,\lambda,a,b)$. We obtain:
$$3\frac{a}{2+b}=2\frac{a}{1+b},$$
so $b=1$. Then $\lambda_0=2\lambda_1=\frac{2a}{2}=a=\frac{a}{b}$, so $\g=\g^{(1)}(1,a,1)$.\end{proof}

\section{Cocommutative Com-PreLie bialgebras}

We shall prove the following theorem:

\begin{theo}\label{24}
Let $A$ be a connected, cocommutative Com-PreLie bialgebra. Then one of the following assertions holds:
\begin{enumerate}
\item There exists a linear form $f:V\longrightarrow \K$ and $\lambda \in \K$, such that $A$ is isomorphic to $S(V,f,\lambda)$.
\item There exists $\lambda \in \K$ such that $A$ is isomorphic to $\g^{(1)}(1,\lambda,1)$.
\end{enumerate}\end{theo}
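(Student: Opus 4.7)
My plan is to proceed in three main stages. First, I would apply the Cartier-Quillen-Milnor-Moore theorem: since $A$ is connected, cocommutative, and commutative (as $\shuffle$ is commutative) in characteristic zero, this gives an isomorphism $A \simeq S(V)$ of Hopf algebras, where $V = Prim(A)$ carries its necessarily abelian Lie bracket. Thus only the preLie product $\bullet$ remains to be classified.

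Second, I would establish the basic dichotomy: \emph{either $\dim V = 1$ or $f_A = 0$}. The key computation is that for $x, y \in V$ primitive, the coproduct compatibility of $\bullet$ together with Lemma~\ref{3} yields $\tdelta(x \bullet y) = f_A(x) \otimes y$. On the other hand, in the decomposition $A = \K \oplus V \oplus S^2(V) \oplus \cdots$, only the $S^2(V)$-component of $x \bullet y$ contributes to the $V \otimes V$-part of $\tdelta$, and that contribution must lie in the symmetric subspace. Hence $f_A(x) \otimes y$ is symmetric for all $x, y \in V$, which forces $f_A = 0$ unless $V$ is one-dimensional.

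Third, I would treat the two cases. If $f_A = 0$, Proposition~\ref{4} gives $V$ a preLie structure $\star$. Computing $\tdelta(x \bullet yz) = (x \star y) \otimes z + (x \star z) \otimes y$ and imposing symmetry as before yields the $\Lambda^2 V$-identity $(x \star y) \wedge z + (x \star z) \wedge y = 0$; setting $y = z$ and then arguing by linearity forces $x \star y = f(x) y$ for some linear form $f \in V^*$, matching the preLie structure on the primitives of $S(V, f, \lambda)$. To reconstruct the full product, I would use the fact that $\psi_v := v \bullet -$ is a 1-cocycle to determine $\psi_v(w_1 \cdots w_k)$ by induction on $k$ modulo a primitive correction $\alpha_k \in V$; the preLie axiom then recursively fixes the $\alpha_k$'s in terms of a single scalar $\lambda$ (introduced at level~$2$), identifying the product with that of $S(V, f, \lambda)$, and the Com-PreLie compatibility extends $\bullet$ to all of $A$.

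If instead $\dim V = 1$ with $f_A \neq 0$, then $A = \K[X]$ as a Hopf algebra and $X \bullet 1 = \lambda X$ with $\lambda \neq 0$. Arguing inductively in $n$, the coproduct compatibility determines $X \bullet X^n$ up to a primitive $X$-term, and applying the preLie relation to triples $(X, X, X^{n-1})$ kills that term, identifying the product with $\g^{(1)}(1, \lambda, 1)$ via Theorem~\ref{18} and Proposition~\ref{23}. The main obstacle will be the inductive reconstruction of $\bullet$ when $f_A = 0$: one must verify that the single parameter $\lambda$ fixed at level~$2$ propagates consistently through all higher levels without new free scalars appearing at each stage, which requires careful bookkeeping of the preLie axiom across many triples combined with the cocycle condition.
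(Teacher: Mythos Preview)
Your strategy is essentially the paper's: reduce to $S(V)$ via Cartier--Quillen--Milnor--Moore, observe that $\dim V\geq 2$ forces $f_A=0$, then reconstruct $\bullet$ inductively from the cocycle property of $v\bullet -$ together with the preLie relation. Your $\Lambda^2V$ argument for the shape of $\star$ on $V$ is in fact a bit cleaner than the paper's (which computes $\bfDelta(x\bullet y^2)$ instead).

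The one organizational difference is that you split by $f_A$ while the paper splits by $\dim V$. This means you route the case $\dim V=1$, $f_A=0$ through your general ``$f_A=0$'' argument, whereas the paper treats all one-dimensional structures separately (its Proposition in Section~4.2, which finds $\g^{(1)}(1,\lambda,1)$ and $\g'(\lambda,\mu)$ by direct computation using the projector $\varpi=m\circ(\pi\otimes Id)\circ\Delta$). Be aware that the paper's reconstruction for $\dim V\geq 2$ genuinely uses linear independence in several places: to show the primitive correction $P(x,y_1\ldots y_k)$ lies in $\mathrm{Vect}(y_1,\ldots,y_k)$, and in Lemma~\ref{26} to force $\bullet=0$ when $f=0$. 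If you want your unified $f_A=0$ argument to cover $\dim V=1$ as well, those steps need a separate (easier) treatment---they become scalar recurrences, but you cannot simply invoke the same cocommutativity-plus-independence trick.

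A minor point: in your $\dim V=1$, $f_A\neq 0$ case, the paper kills the primitive correction $\lambda_k X$ using the preLie relation on the triples $(X,X^k,1)$ and $(X,1,X^k)$ rather than your $(X,X,X^{n-1})$; the latter involves $X\bullet X^n$ on both sides, so you should check that the coefficient of $\lambda_n$ you extract is actually nonzero. Citing Theorem~\ref{18} and Proposition~\ref{23} here is not quite right, since those classify \emph{graded} structures and you have not yet shown $\bullet$ is graded---the conclusion is correct but must be argued directly.
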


First, observe that if $A$ is a cocommutative, commutative, connected Hopf algebra: by the Cartier-Quillen-Milnor-Moore theorem,
it is isomorphic to the enveloping Hopf algebra of an abelian Lie algebra, so is isomorphic to $S(V)$ as a Hopf algebra,
where $V=Prim(A)$. If $V=(0)$, the first point holds trivially. 

\subsection{First case}

We assume in this paragraph that $V$ is at least $2$-dimensional.

\begin{lemma}
Let $A$ be a connected, cocommutative Com-PreLie algebra, such that the dimension of $Prim(A)$ is at least $2$. Then $f_A=0$,
and there exists a map $F:A\otimes A\longrightarrow A$, such that:
\begin{enumerate}
\item For all $x,y \in A_+$, $x\bullet y=F(x\otimes y')y''+F(x\otimes 1)y$.
\item For all $x_1,x_2 \in A$, $F(x_1x_2\otimes y)=F(x_1\otimes y)x_2+x_1F(x_2\otimes y)$.
\item $F(Prim(A)\otimes A)\subseteq \K$.
\end{enumerate}\end{lemma}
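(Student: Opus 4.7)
The plan is to first establish two structural results ($f_A = 0$ and that the primitive action is scalar) and then to construct $F$ inductively. Let $V := Prim(A)$. For the first structural result, take $a,b \in V$; applying the Com-PreLie compatibility and Lemma \ref{3} to $\Delta(a \bullet b)$ and simplifying via $1_A \bullet c = 0$ gives
$$\Delta(a \bullet b) = 1_A \otimes (a \bullet b) + (a \bullet 1_A) \otimes b + (a \bullet b) \otimes 1_A.$$
Equating with the flipped version by cocommutativity forces $(a \bullet 1_A) \otimes b = b \otimes (a \bullet 1_A)$ in $V \otimes V$, and since $\dim V \geq 2$ I can always pick $b$ not collinear with $a \bullet 1_A$ whenever the latter is nonzero, yielding a contradiction. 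Hence $f_A = 0$, and Proposition \ref{4} gives that $V$ is a preLie subalgebra. Applying the same machinery to $\Delta(v \bullet (w_1 w_2))$ for $v, w_1, w_2 \in V$ yields
$$(v \bullet w_1) \otimes w_2 + (v \bullet w_2) \otimes w_1 = w_2 \otimes (v \bullet w_1) + w_1 \otimes (v \bullet w_2)$$
in $V \otimes V$; a direct basis expansion, exploiting $\dim V \geq 2$, forces $v \bullet w = f(v)\, w$ for some linear form $f \colon V \to \K$, uniformly in $w$.

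For the construction of $F$, first identify $A \cong S(V)$ as a Hopf algebra via the Cartier-Quillen-Milnor-Moore theorem. For each $v \in V$ I will define a scalar-valued linear form $g_v := F(v \otimes \cdot) \colon A \to \K$ by induction on the $S(V)$-degree of the second argument, and then extend $F$ to all of $A \otimes A$ by the derivation rule (b), setting $F(1 \otimes y) := 0$. Set $g_v(1) := f(v)$; condition (a) at degree $1$ of $y$ is exactly $v \bullet w = f(v) w$ from the first step. Assume $g_v$ is defined on $S^{<n}(V)$ so that (a) holds on $S^{\leq n}(V)$, and pass to $y \in S^{n+1}(V)$. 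The cocycle identity $\tdelta(v \bullet z) = (v \bullet z') \otimes z''$ from the remark after Lemma \ref{3}, combined with coassociativity of $\tdelta$ and the inductive formula, gives
$$\tdelta\Bigl(v \bullet y - f(v)\, y - \sum_{\deg y' < n} g_v(y')\, y''\Bigr) = 0,$$
so the residue $\omega(y) := v \bullet y - f(v)\, y - \sum_{\deg y' < n} g_v(y')\, y''$ lies in $V$. A further application of cocommutativity to $\Delta(v \bullet (yz))$ with an auxiliary primitive $z$ then constrains $\omega(y)$ to lie in $Vect(w_1, \ldots, w_{n+1})$ when $y = w_1 \cdots w_{n+1}$, and the required scalars $g_v(\hat y_i) := g_v(y / w_i)$ are read off from $\omega(y) = \sum_i g_v(\hat y_i)\, w_i$, consistently extending $g_v$ to $S^n(V)$. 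Extending $F$ to $A \otimes A$ by (b) is unambiguous since $S(V)$ is the free commutative algebra on $V$, and (c) holds by construction. Finally, condition (a) at $x = v_1 v_2$ follows by combining (b) with the Com-PreLie compatibility $(a \shuffle b) \bullet c = (a \bullet c) \shuffle b + a \shuffle (b \bullet c)$ and the established (a) at $v_1, v_2$; iteration handles all $x \in A_+$.

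The main obstacle is the inductive extension of $g_v$: cocycle reasoning alone only places $\omega(y)$ in $V$, whereas producing the decomposition $\omega(y) = \sum_i g_v(\hat y_i) w_i$ requires $\omega(y)$ to lie in the span of the primitive factors of $y$. Extracting this finer containment requires cocommutativity one degree higher, via an auxiliary primitive $z$, and it is precisely here that $\dim V \geq 2$ is used in an essential way: the extra primitive $z$ supplies enough room inside $V$ to separate and kill components of $\omega(y)$ lying outside $Vect(w_1, \ldots, w_{n+1})$.
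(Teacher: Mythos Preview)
Your proposal is correct and follows essentially the same route as the paper: cocommutativity at low degree forces $f_A=0$ and the scalar action $v\bullet w=f(v)w$; then a primitive residue $\omega(y)$ (the paper's $P(x,y_1\ldots y_k)$) is constructed inductively, shown to lie in $V$ via the cocycle identity, and pinned down inside the span of the factors of $y$ by going one degree higher; finally $F$ is extended from $V\otimes A$ to $A\otimes A$ by the derivation rule and property (a) is propagated by the Leibniz identity for $\bullet$.

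Two small points where your write-up diverges from the paper are worth flagging. First, to force $\omega(y)\in Vect(w_1,\ldots,w_{n+1})$ you invoke an \emph{extra} primitive $z$ and an intersection argument, and you locate the essential use of $\dim V\geq 2$ there. The paper instead takes the auxiliary letter equal to one of the existing $w_i$ (concretely $y_{k+1}=y_1$), which gives the span constraint immediately and does not use $\dim V\geq 2$ at that step; the dimension hypothesis is already spent in establishing $f_A=0$ and the scalar action. Your version works too, but the intersection $\bigcap_z Vect(w_1,\ldots,w_{n+1},z)$ only collapses to $Vect(w_1,\ldots,w_{n+1})$ when you allow $z$ among the $w_i$, so in effect you end up using the paper's trick anyway. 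Second, the phrase ``the required scalars $g_v(\hat y_i)$ are read off from $\omega(y)=\sum_i g_v(\hat y_i)w_i$'' hides the well-definedness issue: when the $w_i$ are dependent the coefficients are not unique, and one must check that different factorizations yielding the same element of $S^n(V)$ give the same value. The paper handles this by viewing $P$ as a symmetric multilinear map $V^k\to V$ landing in the span of its arguments, writing $P=\sum_i F'_i(\ldots)y_i$ by multilinearity, and then using the $\mathfrak{S}_k$-symmetry to force $F'_1=\cdots=F'_k$; this produces a single linear form $F_{k-1}$ on $V\otimes S^{k-1}(V)$ without any consistency check. Your argument can be completed the same way.
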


\begin{proof} We assume that $A=S(V)$ as a bialgebra, with its usual product and coproduct $\bfDelta$, and that $dim(V)\geq 2$. Let $x,y\in V$. Then:
$$\bfDelta(x\bullet y)=x\bullet y\otimes 1+1\otimes x\bullet y+f_A(x)\otimes y.$$
By cocommutativity, for all $x,y\in V$, $f_A(x)$ and $y$ are colinear. Let us choose $y_1$ and $y_2 \in V$, non colinear.
Then $f_A(x)$ is colinear to $y_1$ and $y_2$, so belongs to $Vect(y_1)\cap Vect(y_2)=(0)$. Finally, $f_A=0$. \\

We now construct linear maps $F_i:V\otimes S^i(V)\longrightarrow \K$, such that for all $k\geq 0$, putting:
$$F^{(k)}=\displaystyle \bigoplus_{i=0}^k F_i:\bigoplus_{i=0}^k V\otimes S^i(V)\longrightarrow \K,$$
for all $x,y_1,\ldots,y_{k+1}\in V$:
$$x\bullet y_1\ldots y_{k+1}=F^{(k)}(x\otimes (y_1\ldots y_{k+1})')\otimes (y_1\ldots y_{k+1})''+F^{(k)}(x\otimes 1)y_1\ldots y_{k+1}.$$ 
We proceed by induction on $k$. Let us first construct $F^{(0)}$. Let $x,y\in V$.
$$\bfDelta(x\bullet y^2)=1\otimes x\bullet y^2+x\bullet y^2\otimes 1+2x\bullet y\otimes y.$$
By cocommutativity, $x\bullet y$ and $y$ are colinear, so there exists a linear map $g:V\longrightarrow \K$
such that $x\bullet y=g(x)y$. We the take $F^{(0)}(x\otimes 1)=g(x)$. For all $x,y\in V$, $x\bullet y=F(x\otimes 1)y$,
so the result holds for $k=0$. \\

Let us assume that $F^{(0)},\ldots,F^{(k-2)}$ are constructed for $k\geq 2$. Let $x,y_1,\ldots,y_k\in V$.
For all $I\subseteq [k]=\{1,\ldots,k\}$, we put $\displaystyle y_I=\prod_{i\in I} y_i$. Then:
$$\bftDelta(y_1\ldots y_k)=\sum_{I\sqcup J=[k], I,J\neq 1} y_I\otimes y_J,$$
and:
\begin{align*}
\bfDelta(x\bullet y_1\ldots y_k)&=1\otimes x\bullet y_1\ldots y_k+x\bullet y_1\ldots y_k\otimes 1+\sum_{[k]=I\sqcup J,J\neq 1} x\bullet y_I\otimes y_J\\
&=1\otimes x\bullet y_1\ldots y_k+x\bullet y_1\ldots y_k\otimes 1+\sum_{I\sqcup J\sqcup K=[k], J,K\neq 1}
F^{(k-2)}(x \otimes y_I)\otimes y_J \otimes y_K. 
\end{align*}
 We put:
 $$P(x,y_1\ldots y_k)=x\bullet y_1\ldots y_k-\sum_{I\sqcup J=[k], |J|\geq 2} F^{(k-2)}(x\otimes y_I)y_J.$$
 The preceding computation shows that $P(x,y_1\ldots,y_k)$ is primitive, so belongs to $V$. Let $y_{k+1}\in V$. 
\begin{align*}
\bftDelta(x\bullet y_1\ldots y_{k+1})&=\sum_{I\sqcup J\sqcup K=[k+1],K\neq 1,|J|\geq 2}
\underbrace{F^{(k-2)}(x\otimes y_I)y_J}_{\in S_{\geq 2}(V)}\otimes y_K\\
&+P(x,y_1\ldots y_k)\otimes y_{k+1}+\sum_{i=1}^kP(x, y_1\ldots y_{i-1}y_{i+1}\ldots y_{k+1})\otimes y_i.
\end{align*}
By cocommutativity, considering the projection on $V\otimes V$, we deduce that $P(x, y_1\ldots y_k)\in Vect(y_1,\ldots, y_k,y_{k+1})$
for all nonzero $y_{k+1}\in V$. In particular, for $y_1=y_{k+1}$, $P(x\otimes y_1\ldots y_k)\in Vect(y_1,\ldots,y_k)$. 
By multilinearity, there exists $F'_1,\ldots,F'_k\in (V\otimes S_{k-1}(V))^*$, such that for all $x,y_1,\ldots,y_k \in V$:
$$P(x,y_1\ldots y_k)=F'_1(x\otimes y_2\ldots y_k)y_1+\ldots+F'_k(x\otimes y_1\ldots y_{k-1})y_k.$$
By symmetry in $y_1,\ldots,y_k$, $F'_1=\ldots=F'_k=F_{k-1}$. Then:
\begin{align*}
x\bullet y_1\ldots y_k&=\sum_{I\sqcup J=[k],|J|\geq 2} F^{(k-2)}(x\otimes y_I)y_J+\sum_{I\sqcup J=[k],|J|=1}F_{k-1}(x\otimes y_I)y_J\\
&=\sum_{I\sqcup J=[k],|J|\geq 1} F^{(k-1)}(x\otimes y_I)y_J\\
&=F^{(k-1)}(x\otimes (y_1\ldots y_k)')(y_1\ldots y_k)''+F(x\otimes 1)y_1\ldots y_k.
\end{align*}

We defined a map $F:V\otimes S(V)\longrightarrow K$, such that for all $x\in V$, $b\in S_+(V)$,
$$x\bullet b=F(x\otimes b')b''+F(x\otimes 1)b.$$
We extend $F$ in a map from $S(V)\otimes S(V)$ to $S(V)$ by:
\begin{itemize}
\item  $F(1\otimes b)=0$.
\item For all $x_1,\ldots,x_k \in V$, $\displaystyle F(x_1\ldots x_k\otimes b)=\sum_{i=1}^k x_1\ldots x_{i-1}F(x_1\otimes b)x_{i+1}\ldots x_k$.
\end{itemize}
This map $F$ satisfies points 2 and 3. Let us consider:
$$B=\{a\in A\mid \forall  b\in S_+(V), a\bullet b=F(a\otimes b')b''+F(a\otimes 1)b\}.$$
As $1\bullet b=0$ for all $b\in S(V)$, $1\in B$. By construction of $F$, $V\subseteq B$.
Let $a_1,a_2\in B$. For any $b\in S_+(V)$:
\begin{align*}
a_1a_2\bullet b&=(a_1\bullet b)a_2+a_1(a_2\bullet b)\\
&=F(a_1\otimes b')a_2b''+a_1F(a_2\bullet b')b''+F(a_1\otimes 1)a_2b+a_1F(a_2\otimes 1)b\\
&=F(a_1a_2\otimes b')b''+F(a_1a_2\otimes 1)b. 
\end{align*}
So $a_1a_2\in B$. Hence, $B$ is a subalgebra of $S(V)$ containing $V$, so is equal to $S(V)$: $F$ satisfies the first point. \end{proof}\\

{\bf Remarks.} \begin{enumerate}
\item In this case, for all primitive element $v$, the $1$-cocycle of the bialgebra $A$ defined by $L(x)=a\bullet x$
is the coboundary associated to the linear form defined by $f(x)=-F(a\otimes x)$
\item In particular, the preLie product of two elements $x,y$ of $Prim(A)$ si given by:
$$x\bullet y=F(x\otimes 1)y.$$
\end{enumerate}

\begin{lemma}\label{26}
With the preceding hypothesis, let us assume that $F(x\otimes 1)=0$ for all $x\in Prim(A)$. Then $\bullet=0$.
\end{lemma}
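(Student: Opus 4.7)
The plan is to promote the hypothesis $F(x\otimes 1)=0$ for $x\in Prim(A)=V$ to the stronger statement $F(V\otimes A)=0$. Once this holds the conclusion is immediate: for $x\in V$ and $b\in A_+$ the defining formula gives $x\bullet b=F(x\otimes b')b''+F(x\otimes 1)b=0$, and since $\bullet$ is a derivation in its first argument (the Com-PreLie axiom) and $A=S(V)$ is generated as an algebra by $V$, this extends inductively to $a\bullet b=0$ for all $a,b\in A_+$; together with $1\bullet b=0$ from Lemma~\ref{3}, this gives $\bullet=0$. Observe also that the derivation property of $F$ in its first slot, together with $F(V\otimes 1)=0$, yields $F(a\otimes 1)=0$ for every $a\in A$ by induction on the length of a monomial.

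The key identity comes from the preLie axiom applied to $a=x\in V$, $b=y\in V$, $c=z\in A_+$: three of the four terms vanish. Indeed $x\bullet y=F(x\otimes 1)y=0$ kills $(x\bullet y)\bullet z$; likewise $z\bullet y=F(z\otimes 1)y=0$ kills $x\bullet(z\bullet y)$; and writing $x\bullet z=F(x\otimes z')z''$ with $z''\in A_+$, the term $(x\bullet z)\bullet y$ reduces (using $b\bullet y=F(b\otimes 1)y$ for $b\in A_+$, $y\in V$) to $F(x\otimes z')F(z''\otimes 1)y=0$ by the preliminary remark. Since $y\bullet z=F(y\otimes z')z''$, only $x\bullet(y\bullet z)=F(y\otimes z')(x\bullet z'')$ remains, so the preLie identity collapses to
\begin{equation*}
F(y\otimes z')\,(x\bullet z'')=0\qquad\text{for all }x,y\in V,\ z\in A_+.\tag{$\star$}
\end{equation*}
I would then prove $F(V\otimes v^n)=0$ for every $v\in V$ and $n\geq 1$ by induction on $n$. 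For $n=1$, apply $(\star)$ to $z=v^3$: since $x\bullet v=0$ and $x\bullet v^2=2F(x\otimes v)v$, the relation reduces to $6F(y\otimes v)F(x\otimes v)v=0$, which forces $F(\cdot\otimes v)|_V=0$. For the inductive step, assume the claim for exponents $\leq n$ and apply $(\star)$ to $z=v^{2n+3}$; the induction hypothesis forces $F(y\otimes v^k)=0$ for $k\leq n$ and $x\bullet v^j=0$ for $j\leq n+1$, so the only surviving index is $k=n+1$. Substituting $x\bullet v^{n+2}=(n+2)F(x\otimes v^{n+1})v$ gives a nonzero scalar multiple of $F(y\otimes v^{n+1})F(x\otimes v^{n+1})v=0$, whence $F(V\otimes v^{n+1})=0$.

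To finish, polarize: for each fixed $y\in V$ and $n\geq 1$, the polynomial function $v\mapsto F(y\otimes v^n)$ vanishes identically on $V$, so in characteristic zero the associated symmetric $n$-linear form on $V$ is zero, giving $F(V\otimes S^n(V))=0$. Combined with $F(V\otimes 1)=0$ this yields $F(V\otimes A)=0$, and the conclusion follows as described. The main obstacle is calibrating the test exponent: $N=2n+3$ is forced because $k=n+1$ must be the unique index in the Sweedler sum for $\tdelta(v^N)$ satisfying both $k>n$ (so $F(y\otimes v^k)$ may be nonzero) and $N-k\geq n+2$ (so $x\bullet v^{N-k}$ may be nonzero); smaller $N$ leaves no surviving term, while larger $N$ entangles the target value $F(\cdot\otimes v^{n+1})$ with higher unknowns and obstructs the induction.
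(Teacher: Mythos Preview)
Your argument is correct and proceeds along a genuinely different path from the paper's. The paper never invokes the preLie identity; instead it works entirely with the coalgebra structure. It first shows by induction that $x\bullet y_1\cdots y_N$ is primitive, then computes $(\bftDelta\otimes Id)\circ\bftDelta$ of $\tfrac{z_1^2}{2}\bullet z_1^{\alpha_1}\cdots z_n^{\alpha_n}$ and uses cocommutativity to force $z_1\bullet z_1^{\beta_1}\cdots z_n^{\beta_n}$ to be colinear with two linearly independent $z_i$'s, hence zero. That argument leans essentially on the ambient hypothesis $\dim V\geq 2$.

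Your route extracts the single scalar identity $(\star)$ from the preLie axiom and then runs a one-variable induction on powers $v^n$, followed by polarization. Two remarks: your approach does not itself need $\dim V\geq 2$ (though the hypothesis is inherited from the lemma that constructs $F$), so it is in a sense more robust; and the polarization step is where characteristic zero enters your proof, whereas in the paper it is hidden in the linear-independence trick. A small omission: you cover $a\bullet b=0$ for $b\in A_+$ and $1\bullet b=0$, but $a\bullet 1=0$ comes from $f_A=0$ (established in the preceding lemma) together with the derivation axiom, not from $F(a\otimes 1)=0$; this is a one-line fix.
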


\begin{proof} We assume that $A=S(V)$ as a bialgebra. By hypothesis, for all $a\in A$, $F(a\otimes 1)=0$, so $a\bullet 1=0$.
This implies that for all $a,b\in S_+(V)$:
$$\bftDelta(a\bullet b)=a\bullet b'\otimes b''+a'\bullet b'\otimes a''b''+a'\bullet b\otimes a''+a'\otimes a''\bullet b.$$
Let us prove the following assertion by induction on $N$: for all $k<N$, for all $x,y_1,\ldots,y_k \in V$,
$x\bullet y_1\ldots y_k=0$. 
By hypothesis, this is true for $N=1$. Let us assume the result at a certain rank $N\geq 2$.
Let us choose $x,y_1,\ldots,y_N\in V$. Then, by the induction hypothesis:
$$\bftDelta(x\bullet y_1\ldots y_N)=0+0+0+0=0.$$
So $x\bullet y_1\ldots y_N$ is primitive. 

Up to a factorization, we can write any $x \bullet y_1\ldots y_N$ as a linear span of terms of the form
$z_1\bullet z_1^{\beta_1}\ldots z_n^{\beta_n}$, with $z_1,\ldots,z_n$ linearly independent, $\beta_1,\ldots,\beta_n\in \N$,
with $\beta_1+\ldots+\beta_n=N$. If $n=1$, as $dim(V)\geq 2$ we can choose any $z_2$ linearly independent with $z_1$ and take $\beta_2=0$.
It is now enough to consider $z_1\bullet z_1^{\beta_1}\ldots z_n^{\beta_n}$, with $n \geq 2$, $z_1,\ldots,z_n$ linearly independent, 
$\beta_1,\ldots,\beta_n\in \N$, $\beta_1+\ldots+\beta_n=N$. Let $\alpha_1,\ldots,\alpha_n\in \N$, such that $\alpha_1+\ldots+\alpha_n=N+1$.
\begin{align*}
 \bftDelta(z_1\bullet z_1^{\alpha_1}\ldots z_n^{\alpha_n})&=\sum_{i=1}^n \alpha_i z_1\bullet z_1^{\alpha_1}\ldots z_i^{\alpha_i-1}\ldots
 z_n^{\alpha_n}\otimes z_i,\\ \\
\bftDelta\left(\frac{z_1^2}{^2}\bullet z_1^{\alpha_1}\ldots z_n^{\alpha_n}\right)&=
\sum_{i=1}^n \alpha_i (z_1\bullet z_1^{\alpha_1}\ldots z_i^{\alpha_i-1}\ldots z_n^{\alpha_n})z_1\otimes z_i\\
&+\sum_{i=1}^n \alpha_i z_1\bullet z_1^{\alpha_1}\ldots z_i^{\alpha_i-1}\ldots z_n^{\alpha_n}\otimes z_1z_i\\
&+z_1\bullet z_1^{\alpha_1}\ldots z_n^{\alpha_n}\otimes z_1+z_1\otimes z_1\bullet z_1^{\alpha_1}\ldots z_n^{\alpha_n},\\ \\
(\bftDelta \otimes Id)\circ \bftDelta\left(\frac{z_1^2}{^2}\bullet z_1^{\alpha_1}\ldots z_n^{\alpha_n}\right)&=
\sum_{i=1}^n \alpha_i z_1\bullet z_1^{\alpha_1}\ldots z_i^{\alpha_i-1}\ldots z_n^{\alpha_n}\otimes z_1 \otimes z_i\\
&+\sum_{i=1}^n \alpha_i z_1\otimes z_1\bullet z_1^{\alpha_1}\ldots z_i^{\alpha_i-1}\ldots z_n^{\alpha_n}\otimes z_i\\
&+\sum_i \alpha_iz_1\bullet z_1^{\alpha_1}\ldots z_i^{\alpha_i-1}\ldots z_n^{\alpha_n}\otimes z_i\otimes z_1.
\end{align*}
The cocommutativity implies that for all $1\leq i\leq n$, $\alpha_i z_1\bullet z_1^{\alpha_1}\ldots z_i^{\alpha_i-1}\ldots z_n^{\alpha_n}$
and $z_i$ are colinear. We first choose $\alpha_1=\beta_1+1$, $\alpha_i=\beta_i$ for all $i\geq 2$, and we obtain for $i=1$ that
$z_1\bullet z_1^{\beta_1}\ldots z_n^{\beta_n} \in Vect(z_1)$. We then choose $\alpha_n=\beta_n+1$ and $\alpha_i=\beta_i$ for all $i\leq n-1$,
and we obtain for $i=n$ that $z_1\bullet z_1^{\beta_1}\ldots z_n^{\beta_n} \in Vect(z_n)$. Finally, as $n\geq 2$,
$z_1\bullet z_1^{\beta_1}\ldots z_n^{\beta_n} \in Vect(z_1)\cap Vec(z_2)=(0)$; the hypothesis is true at trank $N$.\\

We proved that for all $x\in V$, for all $b\in S(V)$, $x\bullet b=0$. By the derivation property of $\bullet$, as $V$ generates $S(V)$,
for all $a,b \in S(V)$, $a\bullet b=0$. \end{proof}

\begin{lemma} 
Under the preceding hypothesis, Let us assume that $F(Prim(A)\otimes \K)\neq (0)$. Then $A$ is isomorphic to a certain $S(V,f,\lambda)$,
with $V=Prim(A)$ and $f(x)=F(x\otimes 1)$ for all $x\in V$.
\end{lemma}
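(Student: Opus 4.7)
The plan is to use the isomorphism $A \cong S(V)$ of Hopf algebras (coming from Cartier--Quillen--Milnor--Moore, as noted at the start of this section) and then to match the preLie product $\bullet$ with that of $S(V, f, \lambda)$ for a suitable $\lambda \in \K$. Since $\bullet$ is a derivation in its first argument (point 2 of the first lemma of this subsection) and $1\bullet a = 0$ (Lemma \ref{3}), the product $\bullet$ is entirely determined by the values $x \bullet b$ for $x \in V$; by the formula $x \bullet b = F(x \otimes b') b'' + F(x \otimes 1) b$, it then suffices to compute the scalars $F(x \otimes y_1\cdots y_k) \in \K$ for $x, y_i \in V$ and $k \geq 0$. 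The hypothesis $F(Prim(A)\otimes \K)\neq (0)$ is exactly the statement that $f = F(\cdot \otimes 1)|_V$ is a nonzero linear form on $V$.

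First I would apply the preLie identity to $(a,b,c) = (x, y_1, y_2 y_3)$ with $x, y_1, y_2, y_3 \in V$. Using $y_i \bullet y_j = f(y_i) y_j$ and $f_A = 0$, the identity collapses to $f(y_1) F(x \otimes y_2) = F(x \otimes y_1) f(y_2)$, so there is a linear form $c \colon V \to \K$ with $F(x \otimes y) = c(x) f(y)$ for all $x,y \in V$. Next I would apply the preLie identity to $(x, y_1, y_2 y_3 y_4)$ and project on the $V$-component; after simplification this becomes
$$2 c(x) c(y_1) f(y_3) f(y_4) + f(y_1) F(x \otimes y_3 y_4) = f(y_3) F(x \otimes y_1 y_4) + f(y_4) F(x \otimes y_1 y_3).$$
Swapping $y_1 \leftrightarrow y_3$ and subtracting cancels the three $F(x\otimes \cdot\,\cdot)$ terms and yields $c(y_1) f(y_3) = c(y_3) f(y_1)$ (provided $f(y_4)\neq 0$ and $c \not\equiv 0$), so there exists a unique $\lambda \in \K$ with $c = \lambda f$. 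Substituting $c = \lambda f$ back and polarizing $F(x \otimes y y)$ yields $F(x \otimes y_1 y_2) = 2\lambda^2 f(x) f(y_1) f(y_2)$. An induction on $k \geq 1$, applying the preLie identity to $(x, y_1, y_2 \cdots y_{k+1})$ and invoking the induction hypothesis for all strictly smaller degrees, then produces $F(x \otimes y_1 \cdots y_k) = k! \lambda^k f(x) f(y_1) \cdots f(y_k)$; these are precisely the values defining the preLie product of $S(V, f, \lambda)$, so the identity on $V$ extends uniquely to the required Com-PreLie bialgebra isomorphism $A \to S(V, f, \lambda)$.

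The main obstacle is the induction step: at degree $k + 1$, expanding the preLie identity produces a linear relation in which most $F$-values are already fixed by the induction hypothesis, but extracting the single new symmetric $k$-linear form $F(x \otimes y_1 \cdots y_k)$ requires careful combinatorial bookkeeping with coefficients involving factorials and powers of $\lambda$. A cleaner alternative would be to first prove, by a variant of the cocommutativity argument of Lemma \ref{26}, that $x \bullet b = 0$ whenever $x \in \ker f$ and $b \in A$; this reduces the determination of $\bullet$ to the single map $b \mapsto x_0 \bullet b$ for a chosen $x_0$ with $f(x_0) = 1$, which is a $1$-cocycle (in the sense of the remark after Lemma \ref{3}) and can be identified, inductively on the degree of $b$ and using the primitivity arguments of Proposition \ref{4}, with the corresponding cocycle in $S(V, f, \lambda)$.
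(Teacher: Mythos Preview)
Your overall plan is sound and matches the target formula $F(x\otimes y_1\cdots y_k)=k!\lambda^k f(x)f(y_1)\cdots f(y_k)$ that the paper also establishes. Your first step is correct: the preLie identity on $(x,y_1,y_2y_3)$, projected to $V$, does give $f(y_1)F(x\otimes y_2)=f(y_2)F(x\otimes y_1)$, whence $F(x\otimes y)=c(x)f(y)$ for some $c\in V^*$.

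The gap is in your second step. Your displayed relation from $(x,y_1,y_2y_3y_4)$ is right (it comes from the coefficient of $y_2$ in the $V$-projection), but swapping $y_1\leftrightarrow y_3$ and subtracting does \emph{not} cancel all three degree-two $F$-terms: since $F(x\otimes y_1y_3)=F(x\otimes y_3y_1)$, only that one drops out, and you are left with
\[
2c(x)f(y_4)\bigl(c(y_1)f(y_3)-c(y_3)f(y_1)\bigr)+2\bigl(f(y_1)F(x\otimes y_3y_4)-f(y_3)F(x\otimes y_1y_4)\bigr)=0,
\]
which does not yield $c(y_1)f(y_3)=c(y_3)f(y_1)$ without further control on $F(x\otimes\cdot\,\cdot)$. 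So the deduction $c=\lambda f$ is not justified at this point, and the induction cannot start.

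The paper avoids this by working more globally: it first extracts from the preLie identity with $a=x$, $b=y$ primitive and general $c$ the relation $F(x\otimes c)f(y)+F(y\otimes c')F(x\otimes c'')=F(x\otimes F(c\otimes 1)y)$, and uses it to prove by induction on length that $F(x\otimes c)=0$ for every $c\in S_+(\ker f)$. This immediately gives $F(x\otimes y_1\cdots y_k)=g_k(x)f(y_1)\cdots f(y_k)$ for some $g_k\in V^*$, after which a second preLie computation (with $z_1=\cdots=z_k=z$, $f(z)=1$) forces $g_l=\lambda_l f$ and the recursion $\lambda_l=l!\lambda_1^l$. Your ``cleaner alternative'' at the end (showing $x\bullet b=0$ for $x\in\ker f$) is essentially the dual of the paper's vanishing step and would close the gap; as written, however, the main line of your argument breaks at the point indicated.
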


\begin{proof}  We assume that $A=S(V)$ as a bialgebra. Let $a,b,c\in S_+(V)$. Then:
\begin{align*}
\bftDelta([a,b])&=a'\otimes a''\bullet b+a\bullet b'\otimes b''+a'\bullet b\otimes a''\\
&-b'\otimes b''\bullet a-b\bullet a'\otimes a''-b'\otimes a\otimes b''+[a',b']\otimes a''b'',
\end{align*}
where $[-,-]$ is the Lie bracket associated to $\bullet$. Hence:
\begin{align*}
(a\bullet b)\bullet c&=F(a\otimes 1)b\bullet c+F(a\otimes b')b''\bullet c\\
&=F(a\otimes 1)F(b\otimes 1)c+F(a\otimes 1)F(b\otimes c')c''\\
&+F(a\otimes b')F(b''\otimes 1)c+F(a\otimes b')F(b''\otimes c')c'',\\ \\
(a\bullet c)\bullet b&=F(a\otimes 1)F(c\otimes 1)b+F(a\otimes 1)F(c\otimes b')b''\\
&+F(a\otimes c')F(c''\otimes 1)b+F(a\otimes c')F(c''\otimes b')b'',\\ \\
a\bullet [b,c]&=F(a\otimes 1)F(b\otimes 1)c+F(a\otimes 1)F(b\otimes c')c''-F(a\otimes 1)F(c\otimes 1)b\\
&-F(a\otimes 1)F(c\otimes b')b''+F(a\otimes b')F(b''\otimes 1)c+F(a\otimes b')F(b''\otimes c')c''\\
&-F(a\otimes c')F(c''\otimes 1)b-F(a\otimes c')F(c''\otimes b')b''+F(a\otimes F(b\otimes 1)c')c''\\
&+F(a\otimes F(b\otimes c')c'')c'''-F(a\otimes F(c\otimes 1)b')b''-F(a\otimes F(c\otimes b')b'')b'''\\
&+F(a\otimes F(b'\otimes 1)c)b''+F(a\otimes F(b'\otimes c')c'')b''-F(a\otimes F(c'\otimes 1)b)c''\\
&+F(a\otimes F(c'\otimes b')b'')c''+F(a\otimes F(b'\otimes 1)c')b''c''+F(a\otimes F(b'\otimes c')c'')b''c'''\\
&-F(a\otimes F(c'\otimes 1)b')b''c'''-F(a\otimes F(c'\otimes b')b'')b'''c''.
\end{align*}
The preLie relation implies that:
\begin{align*}
0&=F(a\otimes F(b\otimes 1)c')c''+F(a\otimes F(b\otimes c')c'')c'''-F(a\otimes F(c\otimes 1)b')b''\\
&-F(a\otimes F(c\otimes b')b'')b'''+F(a\otimes F(b'\otimes 1)c)b''+F(a\otimes F(b'\otimes c')c'')b''\\
&-F(a\otimes F(c'\otimes 1)b)c''+F(a\otimes F(c'\otimes b')b'')c''+F(a\otimes F(b'\otimes 1)c')b''c''\\
&+F(a\otimes F(b'\otimes c')c'')b''c'''-F(a\otimes F(c'\otimes 1)b')b''c'''-F(a\otimes F(c'\otimes b')b'')b'''c''.
\end{align*}
For $a=x\in V$, $b=y\in V$, as $F(V\otimes S(V))\subset \K$, this simplifies to:
\begin{equation}
\label{EQ1} F(x\otimes c')F(y\otimes 1)c''+F(y\otimes c')F(x\otimes c'')c'''=F(x\otimes F(c'\otimes 1)y)c''.
\end{equation}
Let $x_1,\ldots,x_k \in V$, linearly independent, $\alpha_1,\ldots,\alpha_k\in \N$,
with $\alpha_1+\ldots+\alpha_N\geq 1$. We take $c=x_1^{\alpha_1+1}\ldots x_k^{\alpha_k}$
and $d=x_1^{\alpha_1}\ldots x_k^{\alpha_k}$.
The coefficient of $x_1$ in (\ref{EQ1}), seen as an equality between two polynomials in $x_1,\ldots,x_k$, gives:
$$(\alpha_1+1)(F(x\otimes d)F(y\otimes 1)+F(y\otimes d')F(x\otimes d''))=(\alpha_1+1)F(x\otimes F(d\otimes 1)y).$$
Hence, for all $x,y\in V$, for all $c\in S_+(V)$:
\begin{equation}
\label{EQ2} F(x\otimes c)F(y\otimes 1)+F(y\otimes c')F(x\otimes c'')=F(x\otimes F(c\otimes 1)y).
\end{equation}
We put $f(x)=F(x\otimes 1)$ for all $x\in V$. If $f=0$, by lemma \ref{26}, $\bullet=0$, so $A$ is isomorphic to $S(V,0,0)$.
Let us assume that $f\neq 0$ and let us choose $y\in V$, such that $f(y)=1$. If $z_1,\ldots,z_k \in Ker(f)$, then:
$$F(z_1\ldots z_k\otimes 1)=\sum_{i=1}^k z_1\ldots g(z_i)\ldots z_k=0.$$
Consequenlty, if $c\in S_+(Ker(f))\subseteq S_+(V)$, (\ref{EQ2}) gives:
$$F(x\otimes c')+F(y\otimes c')F(x\otimes c'')=0.$$
An easy induction on the length of $c$ proved that for all $c\in S_+(Ker(g))$, $F(x\otimes c)=0$ for all $x\in V$. 
So there exists linear forms $g_k \in V^*$, such that for all $x,y_1,\ldots,y_k \in V$:
$$F(x\otimes y_1\ldots y_k)=g_k(x)f(y_1)\ldots f(y_k).$$
In particular, $g_0=f$. The preLie product is then given by:
$$x\bullet y_1\ldots y_k=\sum_{i=1}^{k-1}g_i(x)\sum_{1\leq j_1<\ldots<j_i\leq k} y_1\ldots f(y_{j_1})\ldots f(y_{j_i})\ldots y_k.$$

Let $x,y,z_1,\ldots,z_k\in V$.
\begin{align*}
x\bullet (y\bullet z_1\ldots z_k)&=x\bullet \sum_{i=0}^{k-1}g_i(y)\sum_{j_1,\ldots,j_i} z_i\ldots f(z_{j-1})\ldots f(z_{j_i})\ldots z_l\\
&=\sum_{i=0}^{k-1}g_{l-i-1}(x) g_i(x)\binom{l-1}{i}\sum_{j=1}^k f(z_1)\ldots f(z_{j-1})z_j f(z_{j+1})\ldots f(z_k)+S_{\geq 2}(V),\\
(x\bullet y)\bullet z_1\ldots z_k&=f(x)y\bullet z_1\ldots z_k\\
&=f(x)g_{k-1}(y)\sum_{j=1}^k f(z_1)\ldots f(z_{j-1})z_j f(z_{j+1})\ldots f(z_k)+S_{\geq 2}(V),\\
x\bullet (z_1\ldots z_k\bullet y)&=\sum_{i=1}^k f(y_i)x\bullet z_1\ldots z_{i-1}z_{i+1}\ldots z_ky\\
&=k g_{k-1}(x)f(z_1)\ldots f(z_k)y\\
&+(k-1)f(x)g_{k-1}(y)\sum_{j=1}^k f(z_1)\ldots f(z_{j-1})z_j f(z_{j+1})\ldots f(z_k)+S_{\geq 2}(V),\\
(x\bullet z_1\ldots z_k)\bullet y&=\sum_{i=0}^{k-1} g_i(x)\sum_{j_1,\ldots,j_i} z_1\ldots f(z_{j_1})\ldots f(z_{j_i})\ldots z_k \bullet y\\
&=kf(x)g_{k-1}(y)\sum_{j=1}^k f(z_1)\ldots f(z_{j-1})z_j f(z_{j+1})\ldots f(z_k)+S_{\geq 2}(V).
\end{align*}
Let us choose $z_1=\ldots=z_k=z$, such that $f(z)=1$. Then:
$$\sum_{j=1}^k f(z_1)\ldots f(z_{j-1})z_j f(z_{j+1})\ldots f(z_k)=kz\neq 0.$$
The preLie relation implies:
$$f(x)g_{k-1}(y)+(k-1)g_{k-1}(x)f(y)-\sum_{i=0}^{k-1}g_i(y)g_{k-i-1}(x)\binom{k-1}{i}=0,$$
so, for all $l\geq 1$:
\begin{equation}
\label{EQ3} lg_l(x)f(y)=\sum_{i=1}^l g_i(y)g_{l_i}(x)\binom{l}{i}.
\end{equation}
Let us choose $x$ such that $f(x)=1$. Let us consider $y \in Ker(f)$, and let us prove that $g_i(y)=0$ for all $i\geq 0$.
As $g_0=f$, this is obvious for $i=0$. Let us assume the result at all rank $<l$, with $l\geq 1$. Then (\ref{EQ3}) gives:
$$0=\sum_{i=1}^{l-1} g_i(y)g_{l_i}(x)\binom{l}{i}+g_l(y)f(x)=g_l(y).$$
Consequently, for all $l\geq 1$, there exists a scalar $\lambda_l$ such that $g_l=\lambda_l f$. 
If $f(x)=f(y)=1$, equation (\ref{EQ3}) gives, for all $l\geq 1$:
$$l\lambda_l=\sum_{i=1}^l \lambda_i \lambda_{l-i}\binom{l}{i}=\sum_{i=1}^{l-1}\lambda_i\lambda_{l-i}\binom{l}{i}+\lambda_l,$$
so, for all $l\geq 2$:
$$\lambda_l=\frac{1}{l-1}\sum_{i=1}^{l-1}\lambda_i\lambda_{l-i}\binom{l}{i}.$$
An induction proves that $\lambda_l=l! \lambda_1^l$ for all $l\geq 1$. Putting $\lambda_1=\lambda$, for all $x,x_1,\ldots,x_n \in V$:
$$x\bullet x_1\ldots x_k=\sum_{I\subsetneq \{1,\ldots,k\}} |I|! \lambda^{|I|} f(x) \prod_{i\in I} f(x_i) \prod_{i\notin I} x_i.$$
This is the preLie product of $S(V,f,\lambda)$. \end{proof}

\subsection{Second case}

We now assume that $V$ is one-dimensional. So $S(V)$ and $\K[X]$ are isomorphic as bialgebras.
Let us describe all the preLie products on $\K[X]$ making it a Com-PreLie bialgebra.

\begin{prop}
Let $\lambda,\mu \in \K$. We define:
$$X^k \bullet X^l=\lambda k l! \sum_{i=k}^{k+l-1} \frac{\mu^{k+l-i-1}}{(i-k+1)!}X^i.$$
Then $(\K[X],m,\prec,\Delta)$ is a Zinbiel-PreLie algebra denoted by $\g'(\lambda,\mu)$.
\end{prop}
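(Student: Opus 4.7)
The plan is to identify $\g'(\lambda,\mu)$ with a known Zinbiel-PreLie bialgebra: namely $T(V,f,\alpha)$ for a one-dimensional $V$, transported along a Hopf algebra isomorphism. Specifically, take $V=\K v$, let $f$ be the linear form with $f(v)=\lambda$, and set $\alpha=\mu/\lambda$ when $\lambda\neq 0$ (the degenerate case $\lambda=0$ forces $\bullet=0$ on both sides and reduces to the elementary fact that $(\K[X],m,\prec,\Delta)$ is a Zinbiel-PreLie bialgebra with trivial preLie product, which is the Zinbiel structure on the divided power Hopf algebra). By Proposition~\ref{13}, since $\dim V=1$, $T(V,f,\alpha)$ is a Zinbiel-PreLie bialgebra.

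The bridge is the Hopf algebra isomorphism $\Theta:\K[X]\longrightarrow T(V)$, $X\longmapsto v$, already used in the proof of Proposition~\ref{23}, which sends the polynomial power $X^n$ to the shuffle power $v^{\shuffle n}=n!\,v^n$. I would first verify that $\Theta$ intertwines the half-shuffles: using $X^k\prec X^l=\frac{k}{k+l}X^{k+l}$ on $\K[X]$, $v^k\prec v^l=\binom{k+l-1}{k-1}v^{k+l}$ on $T(V)$, and $\Theta(X^n)=n!\,v^n$, a short computation (identical in form to that of Proposition~\ref{23}) yields $\Theta(X^k\prec X^l)=\Theta(X^k)\prec\Theta(X^l)$. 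That $\Theta$ respects $m\leftrightarrow\shuffle$ and $\Delta$ is standard.

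The central computation is to show $\Theta(X^k\bullet X^l)=\Theta(X^k)\bullet\Theta(X^l)$. By Lemma~\ref{8} applied in $T(V,f,\alpha)$, with $\nu=f(v)=\lambda$ and $\alpha f(v)=\mu$,
\[
v^k\bullet v^l=\lambda\sum_{j=k}^{k+l-1}\mu^{k+l-j-1}\binom{j}{k-1}v^j.
\]
Multiplying by $k!\,l!$ (the scalars coming from $\Theta$) and rewriting $\binom{j}{k-1}=j!/((k-1)!\,(j-k+1)!)$, one obtains
\[
\Theta(X^k)\bullet\Theta(X^l)=k!\,l!\,v^k\bullet v^l=\lambda k\,l!\sum_{j=k}^{k+l-1}\frac{\mu^{k+l-j-1}}{(j-k+1)!}\,j!\,v^j=\lambda k\,l!\sum_{j=k}^{k+l-1}\frac{\mu^{k+l-j-1}}{(j-k+1)!}\Theta(X^j),
\]
which is precisely $\Theta$ applied to the formula defining the product of $\g'(\lambda,\mu)$. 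Hence $\Theta$ is an isomorphism of Zinbiel-PreLie bialgebras onto $T(V,f,\alpha)$, and $\g'(\lambda,\mu)$ inherits the desired structure.

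The main obstacle is purely the bookkeeping in the third step: one must check that the factorials coming from $\Theta(X^n)=n!\,v^n$ combine with the binomial $\binom{j}{k-1}$ from Lemma~\ref{8} to yield exactly $\lambda k\,l!\,\mu^{k+l-i-1}/(i-k+1)!$ after reindexing $i=j$. Once that identity is checked, the Zinbiel-PreLie bialgebra axioms for $\g'(\lambda,\mu)$ are automatic by transport along $\Theta$.
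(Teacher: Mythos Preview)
Your proof is correct and follows essentially the same route as the paper: handle $\lambda=0$ trivially, and for $\lambda\neq 0$ transport the Zinbiel-PreLie bialgebra structure of a one-dimensional $T(V,f,\alpha)$ (invoking Proposition~\ref{13}) along the Hopf isomorphism $\Theta:\K[X]\to T(V)$, $X\mapsto v$, using Lemma~\ref{8} to compute $v^k\bullet v^l$ and matching it to the defining formula of $\g'(\lambda,\mu)$. The only cosmetic difference is your choice of parameters $(f(v),\alpha)=(\lambda,\mu/\lambda)$ versus the paper's $(f(x),\lambda)=(\mu/\lambda,\lambda)$, and you add an explicit check that $\Theta$ intertwines the half-shuffles, which the paper leaves implicit.
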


\begin{proof}  If $\lambda=0$, $\bullet=0$ and the result is obvious. Let us assume that $\lambda \neq 0$.
Let $V$ one-dimensional, $x\in V$, nonzero, and let $f\in V^*$ defined by $f(x)=\frac{\mu}{\lambda}$. In $T(V,f,\lambda)$, by lemma \ref{8},
for all $k,l\geq 0$:
$$x^k \bullet x^l=\lambda \sum_{i=k}^{k+l-1} \mu^{k+l-i-1} \binom{i}{k-1}x^j.$$
Let us consider the Hopf algebra isomorphism:
$$\Theta:\left\{\begin{array}{rcl}
\K[X]&\longrightarrow&T(V)\\
X&\longrightarrow&x.
\end{array}\right.$$
For all $k,l \geq 0$:
\begin{align*}
\Theta(X^k)\bullet \Theta(X^l)&=\lambda \sum_{i=k}^{k+l-1}\mu^{k+l-i-1}\frac{i!k!l!}{(k-1)!(i-k+1)!}x^i\\
&=\lambda k l! \sum_{i=k}^{k+l-1}\frac{\mu^{k+l-1-i}}{(i-k+1)!}\Theta(X^i).
\end{align*}
By proposition \ref{13}, $T(V,f,\lambda)$ is a Zinbiel-PreLie bialgebra, so is $\g'(\lambda,\mu)$. \end{proof}

\begin{prop}
Let $\bullet$ a preLie product on $\K[X]$ such that $(\K[X],m,\bullet,\Delta)$ is a Com-PreLie bialgebra. Then $(\K[X],m,\bullet,\Delta)=\g^{(1)}(1,\lambda,1)$
for a certain $\lambda \in \K$, or $\g'(\lambda,\mu)$ for a certain $(\lambda,\mu)\in \K^2$.
\end{prop}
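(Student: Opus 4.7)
The plan is to identify $A$ with $\K[X]$ via Cartier-Quillen-Milnor-Moore and analyze $\bullet$ by tracking its action on $X$. I first set $\lambda\in\K$ by $X\bullet 1=\lambda X$; this is legitimate since $X\bullet 1\in Prim(A)$ by proposition \ref{4}. The Leibniz identity $(X^iX^j)\bullet b=(X^i\bullet b)X^j+X^i(X^j\bullet b)$ gives by induction $X^i\bullet b=iX^{i-1}(X\bullet b)$, so $\bullet$ is entirely determined by the polynomials $P_k:=X\bullet X^k$ for $k\geq 0$. The remark after lemma \ref{3} then provides a $1$-cocycle condition $\Delta(X\bullet X^k)=\sum_i\binom{k}{i}(X\bullet X^i)\otimes X^{k-i}+1\otimes(X\bullet X^k)$; writing $P_k=\sum_mp_k^mX^m$ and comparing coefficients yields the recursion $p_k^m=\frac{1}{m}\binom{k}{m-1}q_{k-m+1}$, where $q_j:=p_j^1$ and $q_0=\lambda$. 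Thus each $P_k$ is determined by the scalar sequence $(q_j)_{j\geq 0}$.

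The preLie axiom then splits into two regimes according to whether $\lambda=0$. If $\lambda\neq 0$, I apply preLie to the triple $(X,X^l,1)$: using $X^l\bullet 1=l\lambda X^l$ and $P_l\bullet 1=\lambda XP_l'$, the identity rearranges to $(l+1)P_l=XP_l'$. Euler's relation forces $P_l$ to be homogeneous of degree $l+1$, and the recursion then pins down $q_j=0$ for $j\geq 1$ and $P_l=\frac{\lambda}{l+1}X^{l+1}$. Reconstructing $\bullet$ gives $X^i\bullet X^j=\frac{i\lambda}{j+1}X^{i+j}$, which is exactly the product of $\g^{(1)}(1,\lambda,1)$. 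If $\lambda=0$, then $f_A=0$ and $X\bullet X\in Prim(A)=\K X$ by proposition \ref{4}, so $X\bullet X=\mu X$ for some $\mu\in\K$. When $\mu=0$, iterating preLie on $(X,X,X^k)$ together with the recursion from the first paragraph forces $q_k=0$ for all $k$, so $\bullet=0$ and $(\K[X],m,\bullet,\Delta)=\g'(0,0)$. When $\mu\neq 0$, set $\nu:=q_2/(2\mu)$ and prove by induction on $k$ that $q_k=k!\mu\nu^{k-1}$; the inductive step extracts a linear equation $\mu q_{k+1}=R(q_2,\ldots,q_k)$ from the coefficient of $X$ in the preLie identity for $(X,X,X^k)$, from which the closed form follows. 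The resulting product coincides with the formula for $\g'(\mu,\nu)$ from the preceding proposition.

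The main obstacle is the bookkeeping in the case $\lambda=0$, $\mu\neq 0$: the expanded preLie identity is a polynomial equation whose coefficient of $X$, simplified using the recursion on the $p_k^m$, must be massaged into the clean recurrence determining $q_{k+1}$. A cleaner alternative, once the leading data $P_1=\mu X$ and $P_2=2\mu\nu X+\mu X^2$ are pinned down, is to invoke the just-constructed bialgebra $\g'(\mu,\nu)$ as a realization with the same initial data and appeal to a rigidity argument in the spirit of the uniqueness propositions for $T(V,f)$ and $T(V,\star)$ to conclude that both products coincide.
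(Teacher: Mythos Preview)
Your overall strategy coincides with the paper's: reduce everything to the scalars $q_k=p_k^1$ via the cocycle identity for $\Delta(X\bullet X^k)$, then use preLie relations to pin down the $q_k$. Your Euler argument in the case $\lambda\neq 0$ is in fact slicker than the paper's direct induction: the relation $XP_l'=(l+1)P_l$ kills all $q_j$ with $j\geq 1$ in one stroke, whereas the paper first derives $\lambda_0\lambda_1=0$ and then inducts. In the case $\lambda=0$, $\mu\neq 0$, your use of the triple $(X,X,X^k)$ is exactly what the paper does (up to your off-by-one in the indexing: the coefficient of $X$ gives a relation for $q_k$, not $q_{k+1}$).

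There is, however, a genuine gap in your treatment of $\lambda=\mu=0$. Under the inductive hypothesis $q_0=\cdots=q_{k-1}=0$, the preLie identity for $(X,X,X^k)$ reduces to $X\bullet P_k=P_k\bullet X$; but then $P_k=q_kX$, $X\bullet X=0$ and $X^m\bullet X=0$, so both sides vanish identically and you learn nothing about $q_k$. The paper avoids this by using a different triple: it shows $X\bullet(X^{k-1}\bullet X^{k+1})=0$ via preLie, then expands $X^{k-1}\bullet X^{k+1}=(k-1)q_{k+1}X^{k-1}+\tfrac{(k-1)(k+1)}{2}q_kX^k$ and extracts $\tfrac{(k-1)(k+1)}{2}q_k^2=0$. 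Your choice of triple can in fact be salvaged by a minimality argument rather than naive induction: if $l\geq 2$ is least with $q_l\neq 0$, take $k=2l-1$; then the only surviving term in $\sum_m p_k^m q_m=0$ is $\tfrac{1}{l}\binom{2l-1}{l-1}q_l^2$, a contradiction. Either route closes the gap, but as written your sentence does not.

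Your final ``rigidity'' alternative is not a shortcut here: the uniqueness results for $T(V,f)$ and $T(V,\star)$ that you invoke rely on a grading compatible with $\bullet$, which $\g'(\mu,\nu)$ lacks when $\nu\neq 0$. Any rigidity argument would have to re-derive the recursion $(k-2)\mu q_k=\sum_{m=2}^{k-1}\tfrac{1}{m}\binom{k}{m-1}q_{k-m+1}q_m$ from the preLie identity for $(X,X,X^k)$, which is exactly the direct computation.
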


\begin{proof} Let $\pi:\K[X]\longrightarrow \K[X]$ be the canonical projection on $Vect(X)$:
$$\pi:\left\{\begin{array}{rcl}
\K[X]&\longrightarrow  \K[X]\\
X^k&\longrightarrow \delta_{k,1}X.
\end{array}\right.$$
For all $k\geq 0$, we put $\pi(X\bullet X^k)=\lambda_k X$.\\

We shall use the map $\varpi=m\circ(\pi \otimes Id) \circ \Delta$. For all $k\geq 0$:
$$\varpi(X^k)=m\circ(\pi \otimes Id)\left(\sum_{i=0}^k \binom{k}{i}X^i \otimes X^{k-i}\right)=m(k X\otimes X^{k-1})=kX^k.$$

{\it First step.} We fix $l\geq 0$. For all $P,Q \in \K[X]$, $\varepsilon(P\bullet Q)=0$; hence, we can write:
$$X\bullet X^l=\sum_{i=1}^\infty a_iX^i.$$
Then:
\begin{align*}
\varpi(X\bullet X^l)&=\sum_{i=1}^\infty ia_iX^i\\
&=m\circ (\pi \otimes Id)\circ \Delta(X\bullet X^l)\\
&=m\circ (\pi \otimes Id)\left(1\otimes X\bullet X^l+\sum_{i=0}^l \binom{l}{i}X\bullet X^i\otimes X^{l-i}\right)\\
&=m\left(\sum_{i=0}^l \binom{l}{i}\lambda_i X\otimes X^{l-i}\right)\\
&=\sum_{i=0}^l \binom{l}{i}\lambda_i  X^{l-i+1}\\
&=\sum_{j=1}^{l+1}\binom{l}{l-j+1}\lambda_{l-j+1}X^j.
\end{align*}
Hence:
$$X\bullet X^l=\sum_{j=1}^{l+1}\binom{l}{l-j+1}\frac{\lambda_{l-j+1}}{j}X^j.$$
By derivation, for all $k\geq 0$, $X^k\bullet X^l=kX^{k-1}(X\bullet X^l)$, so for all $k,l\geq 0$:
$$X^k \bullet X^l=\sum_{j=1}^{l+1} k\binom{l}{l-j+1} \frac{\lambda_{l-j+1}}{j} X^{j+k-1}.$$

{\it Second step.} In particular, for all $k\geq 0$, $X^k\bullet 1=k\lambda_0 X^k$, and $X\bullet X=\frac{\lambda_0}{2}X^2+\lambda_1X$. Hence:
\begin{align*}
X\bullet (X\bullet 1)-(X\bullet X)\bullet 1&=\frac{\lambda_0^2}{2}X^2+\lambda_0\lambda_1X-\frac{\lambda_0}{2}X^2\bullet 1-\lambda_1X\bullet 1\\
&=\frac{\lambda_0^2}{2}X^2+\lambda_0\lambda_1X-\lambda_0^2X^2-\lambda_0\lambda_1X\\
&=-\frac{\lambda_0^2}{2}X^2;\\ \\
X\bullet (1\bullet X)-(X\bullet 1)\bullet X&=0-\lambda_0X\bullet X\\
&=-\frac{\lambda_0^2}{2}X^2-\lambda_0\lambda_1X.
\end{align*}
By the preLie relation, $\lambda_0\lambda_1=0$. We shall now study three cases:
\begin{align*}
\mathbf{1.}\:& \begin{cases}
\lambda_0&\neq 0,\\
\lambda_1&=0;
\end{cases}&
\mathbf{2.}\:& \begin{cases}
\lambda_0&=0,\\
\lambda_1&=0;
\end{cases}&
\mathbf{3.}\:& \begin{cases}
\lambda_0&=0,\\
\lambda_1&\neq 0;
\end{cases}&
\end{align*}

{\it Third step.} First case: $\lambda_0\neq 0$, $\lambda_1=0$. 
Let us prove that $\lambda_k=0$ for all $k\geq 1$ by induction on $k$. It is obvious if $k=1$.
Let us assume that $\lambda_1=\ldots=\lambda_{k-1}=0$.  Then $X\bullet X^k=\frac{\lambda_0}{k+1}X^{k+1}+\lambda_k X$, and:
\begin{align*}
X\bullet (X^k \bullet 1)-(X\bullet X^k)\bullet 1&=k\lambda_0\left(\frac{\lambda_0}{k+1}X^{k+1}+\lambda_kX\right)
-\left(\frac{\lambda_0}{k+1}X^{k+1}+\lambda_kX\right)\bullet 1\\
&=\frac{k}{k+1}\lambda_0^2X^{k+1}+\lambda_0\lambda_k X-\lambda_0^2 X^{k+1}-\lambda_0\lambda_kX\\
&=\frac{-1}{k+1} \lambda_0^2X^{k+1};\\ \\
X\bullet (1 \bullet X^k)-(X\bullet 1)\bullet X^k&=0-\lambda_0\left(\frac{\lambda_0}{k+1}X^{k+1}+\lambda_kX\right)\\
&=\frac{-1}{k+1} \lambda_0^2X^{k+1}-\lambda_0\lambda_kX.
\end{align*}
By the preLie relation, $\lambda_0\lambda_k=0$. As $\lambda_0\neq 0$, $\lambda_k=0$.

Finally, $X^k\bullet X^l=\lambda_0\frac{k}{l+1}X^{k+l}$ for all $k,l\geq 0$: this is the preLie product of $\g^{(1)}(1,\lambda_0,1)$.\\

{\it Fourth step.} Second case: $\lambda_0=\lambda_1=0$. Let us prove that $\lambda_k=0$ for all $k\geq 0$. It is obvious if $k=0,1$.
Let us assume that $\lambda_0=\ldots=\lambda_{k-1}=0$, with $k\geq 2$.  Then $X^i\bullet X^j=0$ for all $j<k$, $i\geq 0$. Hence:
$$X\bullet(X^{k+1}\bullet X^{k-1})=(X\bullet X^{k+1})\bullet X^{k-1}=(X\bullet X^{k-1})\bullet X^{k+1}=0.$$
By the preLie relation, $X\bullet (X^{k-1}\bullet X^{k+1})=0$. Moreover:
\begin{align*}
X \bullet (X^{k-1}\bullet X^{k+1})&=X\bullet\left(\sum_{j=1}^{k-2}\binom{k+1}{k+2-j}(k-1) \frac{\lambda_{k+2-j}}{j}X^{k+2-j}\right)\\
&=X\bullet\left((k-1)\lambda_{k+1}X^{k-1}+(k+1)(k-1)\frac{\lambda_k}{2}X^k\right)\\
&=0+\frac{(k-1)(k+1)}{2}\lambda_k X\bullet X^k\\
&=\frac{(k-1)(k+1)}{2}\lambda_k\left(\sum_{j=1}^{k-1} \binom{k}{k+1-j} \frac{\lambda_{k+1-j}}{j}X^j \right)\\
&=\frac{(k-1)(k+1)}{2}\lambda_k^2 X+0.
\end{align*}
Hence, $\lambda_k=0$.

We finally obtain by the first step $X^k\bullet X^l=0$ for all $k,l\geq 0$: this is the trivial preLie product of $\g^{(4)}(0)$.\\

{\it Fifth step.} Last case: $\lambda_0=0$, $\lambda_1\neq0$. Let us prove that 
$\lambda_k=\displaystyle \frac{k!}{2^{k-1}}\frac{\lambda_2^{k-1}}{\lambda_1^{k-2}}$ for all $k \geq 1$. It is obvious if $k=1$ or $k=2$. 
Let us assume the result at all rank $<k$, with $k\geq 2$. 
\begin{align*}
\pi((X\bullet X) \bullet X^k)&=\pi(\lambda_1 X \bullet X^k)\\
&=\lambda_1\lambda_kX;\\ \\
\pi(X\bullet (X\bullet X^k))&=\pi\left(\sum_{j=1}^k \binom{k}{k+1-j}\frac{\lambda_{k+1-j}}{j}X\bullet X^j \right)\\
&=\sum_{j=1}^k \binom{k}{k+1-j}\frac{\lambda_{k+1-j}\lambda_j}{j}X\\
&=\left(\lambda_k\lambda_1+\sum_{j=2}^{k-1}\frac{1}{j}\binom{k}{k+1-j} \frac{(k+1-j)!j!}{2^{k-j+j-1}} 
\frac{\lambda_2^{k-j+j-1}} {\lambda_1^{k-j-1+j-2}}+\frac{k}{k}\lambda_1\lambda_k\right)X\\
&=\left(2\lambda_1\lambda_k+\sum_{j=2}^{k-1}\frac{k!}{2^{k-1}}\frac{\lambda_2^{k-1}}{\lambda_1^{k-3}}\right)X\\
&=\left(2\lambda_1\lambda_k+(k-2)\frac{k!}{2^{k-1}}\frac{\lambda_2^{k-1}}{\lambda_1^{k-3}}\right)X; \\ \\
\pi((X\bullet X^k)\bullet X)&=\sum_{j=1}^k \binom{k}{k+1-j}\frac{\lambda_{k+1-j}}{j}\pi(X^j \bullet X)\\
&=\sum_{j=1}^k \binom{k}{k+1-j}\frac{\lambda_{k+1-j}}{j}\pi(j\lambda_1 X^j)\\
&=\lambda_1\lambda_kX+0;\\ \\
\pi(X\bullet(X^k\bullet X))&=k\lambda_1 \pi(X\bullet X^k)\\
&=k\lambda_1 \lambda_kX.
\end{align*}
By the preLie relation:
$$\lambda_1\lambda_k-2\lambda_1\lambda_k-(k-2)\frac{k!}{2^{k-1}}\frac{\lambda_2^{k-1}}{\lambda_1^{k-3}}
=\lambda_1\lambda_k-k\lambda_1\lambda_k,$$
which gives, as $\lambda_1\neq 0$ and $k \geq 3$, $\displaystyle \lambda_k=\frac{k!}{2^{k-1}} \frac{\lambda_2^{k-1}}{\lambda_1^{k-2}}$.
Finally, the first step gives, for all $k,l \geq 0$, with $\lambda=\lambda_1$ and $\displaystyle \mu=\frac{\lambda_2}{2\lambda_1}$:
\begin{align*}
X^k \bullet X^l&=\sum_{j=1}^{k+1} k\binom{l}{l+1-j} \frac{\lambda_{l+1-j}}{j}X^{j+k-1}\\
&=\sum_{j=1}^k k\frac{l!(l+1-j)!}{(l+1-j)! (j-1)!j 2^{l-j}} \frac{\lambda_2^{l-j}}{\lambda_1^{l-1-j}}X^{j+k-1}\\
&=\lambda k l! \sum_{j=1}^k \frac{\mu^{l-j}}{j!} X^{j+k-1}\\
&=\lambda k l! \sum_{i=k}^{k+l-1} \frac{\mu^{k+l-i-1}}{(i-k+1)!} X^i.
\end{align*}
This is the preLie product of $\g'(\lambda,\mu)$. \end{proof}\\

As $\g'(\lambda,\mu)$ is a special case of $S(V,f,\lambda)$, this ends the proof of theorem \ref{24}.

\bibliographystyle{amsplain}
\bibliography{biblio}

\end{document}